\theoremstyle{plain}
\newtheorem{thm}{Theorem}[section]
\theoremstyle{plain}
\newtheorem{lem}[thm]{Lemma}
\newtheorem{prop}[thm]{Proposition}
\theoremstyle{definition}
\newtheorem{defi}{Definition}[section]
\newtheorem{rem}{Remark}[section]
\newtheorem*{maintheorem*}{Main Theorem}
\newenvironment{Assumptions}
{
\setcounter{enumi}{0}

\begin{enumerate}}
{\end{enumerate} }
\newcommand{\R}{\ensuremath{\mathbb{R}}}
\newcommand{\rd}{\ensuremath{\mathbb{R}^d}}
\newcommand{\supp}{\ensuremath{\mathrm{supp}\,}}
\newcommand{\goto}{\ensuremath{\rightarrow}}
\newcommand{\grad}{\ensuremath{\nabla}}
\newcommand{\eps}{\ensuremath{\varepsilon}}
\newcommand{\T}{\ensuremath{\mathcal{T}}}
\newcommand{\E}{\ensuremath{\mathcal{E}}}
\numberwithin{equation}{section} \allowdisplaybreaks
\title[Flux-splitting finite volume scheme ]
{ Convergence of a flux-splitting finite volume scheme for conservation laws driven by L\'{e}vy noise.}
\date{}
\subjclass[2000]{45K05, 46S50, 49L20, 49L25, 91A23, 93E20}
\keywords{Conservation Laws,  Stochastic Forcing, L\'{e}vy Noise,
 Stochastic Entropic Solution, Stochastic Partial Differential Equations,Young measures, Kruzkov's entropy, Finite volume scheme.
}
\author[Ananta K. Majee]{Ananta K. Majee}
\address[Ananta K. Majee]{\newline
Mathematisches Institut,
Universit\"{a}t T\"{u}bingen,
Auf der Morgenstelle 10,
D-72076 T\"{u}bingen, Germany. }
\email[]{majee@na.uni-tuebingen.de}
\thanks{}
\begin{document}
\begin{abstract}
We explore numerical approximation of multidimensional stochastic balance laws driven by multiplicative L\'{e}vy noise via flux-
splitting finite volume method. The convergence of the approximations is proved towards the unique entropy solution of the underlying problem.
\end{abstract}

\maketitle

\section{Introduction}
 Let $\big(\Omega, \mathbb{P}, \mathcal{F}, \{\mathcal{F}_t\}_{t\ge 0} \big)$ be a filtered probability space satisfying 
 the usual hypothesis i.e  $\{\mathcal{F}_t\}_{t\ge 0}$ is a right-continuous filtration such that $\mathcal{F}_0$ 
 contains all the $\mathbb{P}$-null subsets of $(\Omega, \mathcal{F})$. In this paper, we are interested in the study of numerical scheme and 
 numerical approximation for multi-dimensional nonlinear stochastic balance laws of type
  
 \begin{align}
 du(t,x) + \mbox{div}_x\big(\vec{v}(t,x) f(u(t,x))\big)\,dt&=\int_{\mathbf{E}} \eta(u(t,x);z)\,\tilde{N}(dz,dt),\quad (t,x)\in \Pi_T,\label{eq:levy_con_laws} \\
 u(0,x) &= u_0(x),\quad x\in \R^d, \notag 
\end{align}
where $\Pi_T = [0,T) \times \R^d$ with $T>0$ fixed.  Here, $ f:\R\rightarrow \R$ is a given real valued flux function, $\vec{v}$ is a given 
 vector valued function, $u_0(x) $ is a given initial function and $\tilde{N}(dz,dt)= N(dz,dt)-m(dz)\,dt $, 
where $N$ is a Poisson random measure on $(\mathbf{E},\mathcal{E})$ with intensity measure $m(dz)$, where $(\mathbf{E},\mathcal{E},m)$ is a $\sigma$-finite measure space.
Furthermore, $(u,z)\mapsto \eta(u,z)$ is a given real valued functions signifying
the multiplicative nature of the noise.
\vspace{.1cm}

This type of equation arises in many
different fields where non-Gaussianity plays an important role. As for example, it has been used in models of neuronal activity accounting for
synaptic transmissions occurring randomly in time as well as at different locations on a spatially extended
neuron, chemicals reaction-diffusion systems, market fluctuations both for risk management and option
pricing purpose, stochastic turbulence, etc. The study of well-posedness theory for this kind of equation
is of great importance in the light of current applications in continuum physics.
\begin{rem}
We will carry out our analysis under the structural assumption $\mathbf{E} = \mathcal{O}\times \R^*$ where $\mathcal{O}$ is a 
 subset of the Euclidean space. The measure $m$ on $\mathbf{E}$ is defined as $\gamma \times \mu^*$ where $\gamma$ is a Radon measure on $\mathcal{O}$
 and $\mu^*$ is so-called L\'{e}vy measure on $\R^*$. Such a noise would be called an impulsive white noise with jump position intensity $\gamma$ and jump size intensity $\mu^*$.
 We refer to \cite{peszat} for more on L\'{e}vy sheet and related impulsive white noise. 
\end{rem}

In the case $\eta=0$, the equation \eqref{eq:levy_con_laws} becomes a standard conservation law in $\R^d$ 
and there exists a satisfactory well-posedness theory based on Kruzkov's pioneering idea to pick up the 
physically relevant solution in an unique way, called {\em entropy solution}. We refer to \cite{godu,kruzkov,malek,volpert} and  
references therein for more on entropy solution theory for deterministic conservation laws.

The study of stochastic balance laws driven by noise is comparatively new area of pursuit. Only recently balance laws with stochastic forcing have
attracted the attention of many authors \cite{BaVaWit,Vallet2014,BaVaWit_2015,BisMaj,BisKarlMaj,
BisKoleyMaj, BisMajVal,Chen:2012fk, Vovelle2010,martina,xu,nualart:2008,Kim2003,KoleyMajVal} and resulted a significant momentum in the theoretical development of such problems. 
Due to nonlinear nature of the underlying problem, explicit solution formula is hard to obtain and hence
robust numerical schemes for approximating such equation are very important. In the last decade, there has been a growing interest in numerical
approximation and numerical experiments for entropy solution to the related Cauchy problem driven by stochastic forcing.
The first documented development in this direction is \cite{risebroholden1997}, where
the authors established existence of weak solution (possibly non-unique) of one dimensional balance law driven by Brownian noise via 
splitting method. In a recent paper \cite{kroker}, Kr\"{o}ker and  Rodhe established
the convergence of monotone semi-discrete finite volume scheme by using stochastic compensated compactness method. Bauzet~\cite{bauzet} revisited the paper of Holden and Risebro \cite{risebroholden1997}, and generalized the operator-splitting method  
for the same Cauchy problem but in a bounded domain of $\R^d$. Using Young measure theory, the author established the convergence of 
approximate
solutions to an entropy solution. We also refer to see \cite{KarlStor}, where the time splitting method was analyzed for more general noise
coefficient in the spirit of Malliavin calculus
and Young measure theory.
In a recent papers \cite{bauzet-fluxsplitting, bauzet-finvolume}, Bauzet et.~ al.~ have studied fully discrete
scheme via flux-splitting and monotone finite volume schemes for stochastic conservation laws driven by multiplicative
Brownian noise and established its convergence by using Young measure technique. 

 The study of numerical schemes for stochastic balance laws driven by L\'{e}vy noise is more sparse than the previous case. 
 A semi-discrete finite difference scheme for conservation laws driven by a homogeneous multiplicative L\'{e}vy noise has been studied 
by Koley et al.\cite{KoleyMajVal-rateofconvergence}. Using BV estimates, the authors showed the convergence of approximate solutions, generated by
the finite difference scheme, to the unique entropy solution as the spatial mesh size $\Delta x \goto 0$ and established rate of convergence
which is of order $\frac{1}{2}$.  

 The above discussions clearly highlight the lack of the study of fully discrete scheme and its convergence for stochastic balance laws driven by 
L\'{e}vy noise. In this paper, drawing primary motivation from \cite{bauzet-fluxsplitting}, 
we propose a fully discrete flux-splitting finite volume scheme for \eqref{eq:levy_con_laws}, and address the convergence of the scheme. First we establish few
essential {\it a priori} estimates 
for approximate solutions and then using these estimates, we deduce entropy inequality for approximate solutions. Using Young measure theory, we conclude that the finite volume approximate
solutions tend to a generalized entropy solution of \eqref{eq:levy_con_laws}.

The rest of the paper is organized as follows. In Sections \ref{sec:technical} and \ref{sec:scheme}, we collect all the assumptions for
the subsequent analysis, then we define the numerical scheme and finally state the main result of this article. Section \ref{sec:apriori-estimate}
deals with few a priori estimates on the finite volume approximate solutions and using these {\it a priori} estimates, in Section 
 \ref{sec:entropy-inequality}, we establish discrete and continuous version of entropy inequalities on approximate solutions. The 
 Section \ref{sec:proof-maintheorem} is devoted to the proof of the main theorem along with short discussion of Young measure theory and 
 its compactness, is presented in Appendix \ref{sec:appendix}.

\section{Preliminaries and technical framework}\label{sec:technical}
It is well-known that due to nonlinear flux term in \eqref{eq:levy_con_laws}, solutions to \eqref{eq:levy_con_laws} are 
not necessarily smooth even if initial data is smooth, and hence must be interpreted via weak sense. 
Before introducing the concept of weak solutions, we first assume that 
$\big(\Omega, \mathbb{P}, \mathcal{F}, \{\mathcal{F}_t\}_{t\ge 0} \big)$ be a filtered probability space satisfying 
the usual hypothesis, i.e.,  $\{\mathcal{F}_t\}_{t\ge 0}$ is a right-continuous filtration such that $\mathcal{F}_0$ 
contains all the $\mathbb{P}$-null subsets of $(\Omega, \mathcal{F})$. Moreover,
by a predictable $\sigma$-field on $[0,T]\times \Omega$, denoted
by $\mathcal{P}_T$, we mean that the 
$\sigma$-field is generated by the sets of the form: $ \{0\}\times A$ and $(s,t]\times B$  for any $A \in \mathcal{F}_0; B
\in \mathcal{F}_s,\,\, 0<s,t \le T$.
The notion of stochastic weak solution is defined as follows:
\begin{defi} [Weak solution] 
 A square integrable $ L^2(\R^d)$-valued $\{\mathcal{F}_t: t\geq 0 \}$-predictable stochastic process $u(t)= u(t,x)$ is
called a stochastic weak solution of \eqref{eq:levy_con_laws} if for all test functions $\psi \in C_c^{\infty}( [0,T)\times \R^d)$,
\begin{align*}
\int_{\R^d} \psi(0,x) u_0(x)\,dx & + \int_{\Pi_T} \Big\{ \partial_t \psi(t,x) u(t,x)
+  \vec{v}(t,x) f(u(t,x)) \cdot \nabla_x \psi(t,x) \Big\}\,dt\,dx \notag \\ 
& + \int_{\Pi_T} \int_{\mathbf{E}} \eta( u(t,x);z) \psi(t,x)\,\tilde{N}(dz,dt)\,dx = 0, \quad \mathbb{P}\text{-a.s}.
\end{align*}
\end{defi} 
However, since there are infinitely many weak solutions, one needs to define an extra admissibility criteria to
select physically relevant solution in a unique way, and one such condition is called entropy condition. Let us begin with the notion of
entropy flux pair. 
\begin{defi}[Entropy flux pair]
$(\beta,\phi) $ is called an entropy flux pair if $ \beta \in C^2(\R) $ and $\zeta:\R \mapsto \R $ is such that
\[\zeta^{\prime}(r) = \beta'(r)f'(r). \]
 An entropy flux pair $(\beta,\zeta)$ is called convex if $ \beta^{\prime\prime}(s) \ge 0$.  
\end{defi} 
Let $\mathcal{A}= \big\{ \beta \in C^2(\R), \text{convex such that}\,\,\text{support of}\,\, \beta^{\prime \prime}\,\,\text{is compact} \big\} $.
 In the sequel, we will use specific entropy flux pairs. For any $a\in \R$
 and $\beta \in \mathcal{A}$, define $\displaystyle F^\beta(a)=\int_0^{a}\beta^{\prime}(s)f^\prime(s)\,ds$ . Note that, $F^\beta(\cdot)$ is a Lipschitz continuous function
 on $\R$ and $(\beta,F^\beta)$ is an entropy flux-pair. 
To this end, we define the notion of stochastic entropy solution of \eqref{eq:levy_con_laws}.
\begin{defi} [Stochastic entropy solution]\label{defi:stoc-entropy}
An $L^2(\R^d)$-valued $\{\mathcal{F}_t: t\geq 0 \}$-predictable stochastic process $u(t)= u(t,x)$ is
called a stochastic entropy solution of \eqref{eq:levy_con_laws} if the following hold:
\begin{itemize}
 \item[i)] For each $ T>0$  \[\sup_{0\leq t\leq T} \mathbb{E}\Big[||u(\cdot,t)||_{2}^{2}\Big] < + \infty. \] 
 \item[ii)] For each  $0\leq \psi\in C_{c}^\infty( [0,\infty)\times \R^d)$ and  $\beta \in \mathcal{A}$, there holds 
\begin{align*}
 &\int_{\R^d} \psi(x,0)\beta(u_0(x))\,dx + \int_{\Pi_T} \Big\{ \partial_t \psi(t,x) \beta(u(t,x))
+ F^\beta(u(t,x)) \vec{v}(t,x) \cdot \grad \psi(t,x) \Big\}\,dx\,dt \notag \\ 
  & + \int_{\Pi_T}\int_{\mathbf{E}} \int_0^1 \eta(u(t,x);z) \beta^\prime\big( u(t,x)+ \lambda \eta(u(t,x);z)\big) \psi(t,x)
  \,d\lambda\,\tilde{N}(dz,dt)\,dx  \notag\\
 & + \int_{\Pi_T}\int_{\mathbf{E}} \int_0^1 (1-\lambda) \eta^2(u(t,x);z) \beta^{\prime\prime}\big( u(t,x)+ \lambda \eta(u(t,x);z)\big) \psi(t,x)
  \,d\lambda\,m(dz)\,dt\,dx  \ge 0, \quad \mathbb{P}\text{-a.s.} 
\end{align*}
\end{itemize}
\end{defi} 

Due to nonlocal nature of the  It\^{o}-L\'{e}vy formula and the missing noise-noise interaction, the Definition~\ref{defi:stoc-entropy} 
 does not alone give the $L^1$-contraction principle in the sense of average when one tries to compare two entropy solutions directly,
 and hence fails to give uniqueness. For the details, we refer to see \cite{Chen:2012fk,nualart:2008}. However, in view of \cite{BaVaWit,BisKarlMaj},
 we can look for so called {\em generalized entropy solution} which are 
 $L^2\big(\R^d \times (0,1)\big)$-valued $\{\mathcal{F}_t: t\geq 0 \}$-predictable stochastic process.
 \begin{defi} [Generalized entropy solution]\label{defi:generality entropy solution}
An $ L^2\big(\R^d \times (0,1)\big)$-valued $\{\mathcal{F}_t: t\geq 0 \}$-predictable stochastic process $u(t)= u(t,x,\alpha)$ is
called a generalized stochastic entropy solution of \eqref{eq:levy_con_laws} provided \\
(1) For each $ T>0$  \[\sup_{0\leq t\leq T} \mathbb{E}\Big[||u(t,\cdot,\cdot)||_{2}^{2}\Big] < + \infty. \] 
(2) For all test functions $0\leq \psi\in C_{c}^{1,2}( [0,\infty)\times \R^d)$, and any $\beta \in \mathcal{A}$, the following inequality holds
\begin{align}
 &\int_{\R^d} \beta(u_0(x))\psi(x,0)\,dx + \int_{\Pi_T}\int_0^1 \Big\{\beta(u(t,x,\alpha))\partial_t \psi(t,x) + 
  F^\beta(u(t,x,\alpha)) \vec{v}(t,x)\cdot \grad_x \psi(t,x)\Big\}\, d\alpha\,dt\,dx \notag \\
   & + \int_{\Pi_T} \int_{\mathbf{E}} \int_0^1\int_{0}^1 \eta(u(t,x,\alpha);z) \beta^\prime \big(u(t,x,\alpha) +
   \lambda \eta(u(t,x,\alpha);z)\big) \psi(t,x)\,d\alpha\,d\lambda \,\tilde{N}(dz,dt)\,dx \notag \\
    & +  \int_{\Pi_T} \int_{\mathbf{E}}\int_0^1 \int_{0}^1 (1-\lambda) 
    \eta^2(u(t,x,\alpha);z) \beta^{\prime\prime} \big(u(t,x,\alpha) + \lambda \eta(u(t,x,\alpha);z)\big)\notag \\
    & \hspace{5cm} \times \psi(t,x)\,d\alpha\, d\lambda\,m(dz)\,dt\,dx
    \ge 0, \quad \mathbb{P}\text{-a.s.} \label{eq: generalised entropy inequality}
\end{align}
\end{defi}
The aim of this paper is to establish convergence of approximate solutions, constructed via flux-splitting finite volume scheme
 (cf.~Section \ref{sec:scheme}), to the unique entropy solution of \eqref{eq:levy_con_laws}, and we will do so under the following assumptions: 
\begin{Assumptions}
 \item \label{A1}  $f: \R \mapsto \R$ is $C^2$ and Lipschitz continuous with $f(0)=0$.
\item \label{A2} $\vec{v}:[0,T]\times \R^d \mapsto \R^d $ is a $C^1$ function with $\text{div}_x \vec{v}(t,x)=0$ for all $(t,x) \in \Pi_T$. Furthermore,
there exists $V < + \infty$ such that 
$|\vec{v}(t,x)| \le V$ for all $(t,x)\in \Pi_T$. 
 
\item \label{A3} There exist  positive constants $0<\lambda^* <1$ and $C^*>0$, and $h_1 \in L^2(\mathbf{E},m)$ with $0\le h_1(z)\le 1$ such that 
$ ~\text{for all}~  u,v \in \R;~~z\in \mathbf{E}$
 \begin{align*}
 | \eta(u;z)-\eta(v;z)|  & \leq  \lambda^* |u-v|h_1(z); \quad |\eta(u,z)|\le  C^* h_1(z).
 \end{align*}
 Moreover, $\eta(0;z)= 0$ for all $z \in \mathbf{E}$.

\item \label{A4} The initial function $u_0(x)$ is a $L^2(\R^d)$-valued $\mathcal{F}_0$ measurable random variable satisfying 
$$\mathbb{E}\Big[ ||u_0||_2^2\Big] < + \infty.$$
\end{Assumptions}
We have the following existence and uniqueness 
theorems whose proofs are postponed to the Appendix.
\begin{thm}\label{thm:existence}
 Let the assumptions \ref{A1}-\ref{A4} hold. Then there exists a generalized entropy solution 
of \eqref{eq:levy_con_laws} in the sense of Definition \ref{defi:generality entropy solution}.
\end{thm}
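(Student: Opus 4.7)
The plan is to obtain the generalized entropy solution as the limit of the fully discrete flux-splitting finite volume approximations $u_h$ defined in Section \ref{sec:scheme}, under a suitable CFL-type restriction linking the time step $\Delta t$ and the spatial mesh size $\Delta x$. The existence proof will then be an easy byproduct of the convergence theorem which the rest of the paper is devoted to proving. Concretely, I would pick a discretization parameter $h=(\Delta t,\Delta x)\to 0$, build the piecewise-constant (in time-space) random fields $u_h(t,x,\omega)$, and then string together: (i) a priori estimates, (ii) a discrete entropy inequality, (iii) a continuous entropy inequality with vanishing error terms, and (iv) a Young-measure based passage to the limit that delivers a function $u(t,x,\alpha)$ satisfying Definition \ref{defi:generality entropy solution}.

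First I would establish the standard a priori bounds: an $L^\infty_t L^2_\omega L^2_x$ energy estimate for $u_h$ obtained by multiplying the scheme by $u_h$ itself, using the divergence-free assumption \ref{A2} on $\vec v$, the conservative structure of the flux splitting, the sublinear growth in \ref{A3} via $h_1 \in L^2(\mathbf{E},m)$, and the isometry for the compensated Poisson integral. Under the CFL condition this yields $\mathbb{E}\!\int u_h^2\,dx$ uniformly bounded. Alongside this, a weak-BV (Kuznetsov-type) estimate coming from the monotone/flux-splitting structure controls the spatial jump dissipation $\sum (\Delta t)(\Delta x)^{d-1}\,(u_{K}^n-u_L^n)^2$ by something of order $1$, which is the key extra information needed to kill the numerical diffusion error when deriving the limit entropy inequality.

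Next I would derive a discrete entropy inequality: for each $\beta\in\mathcal A$, apply a discrete It\^o-L\'evy formula to $\beta(u_K^{n+1})$ versus $\beta(u_K^n)$; the deterministic part yields nonpositive numerical dissipation modulo a consistency error controlled by the weak-BV bound, while the stochastic part generates (a) the compensated Poisson term with $\int_0^1 \beta'(u_K^n+\lambda\eta(u_K^n;z))\,d\lambda$ and (b) the Jensen-type remainder $\int_0^1(1-\lambda)\beta''(u_K^n+\lambda\eta(u_K^n;z))\eta^2\,d\lambda\,m(dz)\,dt$, which are precisely the two integrals appearing in \eqref{eq: generalised entropy inequality}. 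Summing against a test function $\psi$ and reorganising produces a continuous-in-$(t,x)$ entropy inequality for $u_h$ modulo errors $R_h(\psi,\beta)$ that tend to $0$ in probability as $h\to 0$ by virtue of the weak-BV estimate, smoothness of $\psi$, and the $L^2$ bound on $u_h$.

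Finally, since we do not expect strong compactness of $(u_h)$, I would invoke the Young measure machinery summarised in the appendix: the uniform $L^2(\Omega\times\Pi_T)$ bound gives a narrow Young-measure limit represented by $u(t,x,\alpha)$ on $\Omega\times\Pi_T\times(0,1)$, which inherits $\mathcal F_t$-predictability because each $u_h$ is predictable and the parametrisation is measurable. Continuous nonlinear functionals $\beta(u_h),\,F^\beta(u_h),\,\eta(u_h;z)$ then converge weakly to their integrals in $\alpha$, and the entropy inequality passes to the limit term by term. The main obstacle will be the compensated Poisson stochastic integral: Young convergence is only weak, so one needs to show that $\int_{\Pi_T\!\times\mathbf E}\!\int_0^1\eta(u_h;z)\beta'(u_h+\lambda\eta(u_h;z))\psi\,d\lambda\,\tilde N(dz,dt)\,dx$ converges to its Young-averaged analogue in $L^2(\Omega)$. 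I would handle this by writing the difference as a martingale, using It\^o's isometry, the Lipschitz bound on $\eta$ in \ref{A3} (with the factor $\lambda^*<1$ providing the needed contraction), and the narrow convergence of $(u_h)$ together with continuity of $(u,z)\mapsto \eta(u;z)\beta'(u+\lambda\eta(u;z))$; the analogous but easier argument handles the $m(dz)\,dt$ remainder. Combining these convergences yields \eqref{eq: generalised entropy inequality} for $u$, and together with the $L^2$ bound this proves $u$ is a generalized entropy solution in the sense of Definition \ref{defi:generality entropy solution}.
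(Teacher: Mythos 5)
Your route is viable and is, in substance, the argument the paper already runs in its main body (Lemmas \ref{lem:moment estimate} and \ref{lem:weak bv}, Propositions \ref{prop:discrete entropy ineq} and \ref{prop:entropy inequality on appro solution}, and the Young--measure passage to the limit of Section \ref{sec:proof-maintheorem}), but it is not how the paper proves Theorem \ref{thm:existence} itself. The paper's own proof is in the Appendix and goes through the vanishing viscosity approximation \eqref{eq:levy_con_laws-viscous}: a weak solution $u_\eps$ of the viscous problem is constructed by an implicit time discretization and a contraction argument (Lax--Milgram plus a fixed point for $\Delta t < 2\eps/(V^2 c_f^2)$), uniform-in-$\eps$ energy estimates are derived, the It\^{o}--L\'{e}vy formula applied to $\int_{\R^d}\beta(u_\eps)\psi\,dx$ gives a viscous entropy inequality, and the limit $\eps\to 0$ is taken with the same Young--measure machinery. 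The paper keeps this separate viscous construction because the solutions $u_\eps$ are also the essential tool in the uniqueness proof (Theorem \ref{thm:uniqueness} compares an arbitrary generalized entropy solution against $u_\eps$ by doubling of variables), so the viscosity machinery is needed regardless, and an existence proof independent of the scheme keeps the Main Theorem from being self-referential. Your approach buys economy --- existence falls out of the convergence analysis one must do anyway --- and it is non-circular, since nothing in Sections \ref{sec:apriori-estimate}--\ref{sec:proof-maintheorem} invokes Theorem \ref{thm:existence}.

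One step of your sketch needs repair. For the compensated Poisson term you propose to ``write the difference as a martingale and use It\^{o}'s isometry'' together with narrow convergence. The It\^{o}--L\'{e}vy isometry converts the $L^2(\Omega)$ distance between the two stochastic integrals into the $L^2$ distance between the integrands, and Young--measure (narrow) convergence gives only \emph{weak} convergence of $\eta(u_h;z)\beta'(u_h+\lambda\eta(u_h;z))$ to its $\alpha$-average, not strong convergence; the factor $\lambda^*<1$ in \ref{A3} provides no contraction here, so that estimate does not close. The mechanism actually used in Subsection \ref{subsec:proof-main-thm} is that the It\^{o}--L\'{e}vy integral is a bounded linear isometry from the space of square-integrable predictable integrands into $L^2\big((\Omega,\mathcal{F}_T);\R\big)$, hence carries weakly convergent sequences of integrands to weakly convergent sequences of integrals; testing against ${\bf 1}_B$ for $B\in\mathcal{F}_T$ is all that is required to pass to the limit in the entropy inequality. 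With that correction, the rest of your outline matches the paper's scheme-convergence argument.
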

\begin{thm}\label{thm:uniqueness}
Under the assumptions \ref{A1}-\ref{A4}, the generalized entropy solution of \eqref{eq:levy_con_laws} is unique. Moreover, it is
the unique stochastic entropy solution.
\end{thm}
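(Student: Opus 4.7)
The plan is to prove uniqueness via Kruzkov's doubling of variables technique adapted to the generalized stochastic setting, in the spirit of \cite{BaVaWit, BisKarlMaj}. Let $u(t,x,\alpha)$ and $v(s,y,\beta)$ be two generalized entropy solutions with the same initial datum $u_0$, depending on independent auxiliary variables $\alpha,\beta\in(0,1)$. First, I would approximate the Kruzkov entropy $r\mapsto|r-k|$ by a family $\beta_{\xi}\in\mathcal{A}$, apply the entropy inequality \eqref{eq: generalised entropy inequality} for $u(\cdot,\cdot,\alpha)$ with the ``constant'' $k$ taken as $v(s,y,\beta)$, and symmetrically for $v$ with $k=u(t,x,\alpha)$. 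The test function is the classical Kruzkov choice $\psi(t,x,s,y)=\rho_{\delta}(t-s)\prod_{i=1}^{d}\rho_{\varepsilon}(x_i-y_i)\varphi(t,x)$ with non-negative mollifiers and a fixed cut-off $\varphi\in C_{c}^{\infty}([0,\infty)\times\R^d)$.

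After taking expectation and summing the two inequalities, the pure martingale integrals against $\tilde N$ vanish because $v(s,y,\beta)$ is independent of the Poisson increments $\tilde N(\mathrm dz,\mathrm dt)$ relevant for $u$ (and vice versa) by predictability. The remaining compensator-type contributions and the second-order entropy correction integrals coming from the It\^o-L\'evy structure must then be combined: using assumption \ref{A3}, and crucially the strict contraction constant $\lambda^*<1$, these noise-noise interactions are absorbed into a non-positive dissipative term. This is precisely where the generalized entropy framework is indispensable, since a direct Kruzkov comparison between two classical entropy solutions in the sense of Definition \ref{defi:stoc-entropy} breaks down because the two stochastic integrals cannot be identified pathwise. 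The divergence-free condition in \ref{A2} simultaneously turns the transport contribution into a clean spatial derivative of a Kruzkov-type flux.

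Sending the mollification parameters $\varepsilon,\delta\to 0$ and then $\xi\to 0$ (so that $\beta_\xi$ approaches $|\cdot|$), routine Kruzkov-type manipulations yield a Kato-type inequality of the form
\[
  \frac{d}{dt}\,\mathbb{E}\int_{\R^d}\!\int_0^1\!\int_0^1 \bigl|u(t,x,\alpha)-v(t,x,\beta)\bigr|\,d\alpha\,d\beta\,dx \;\le\; 0
\]
in a distributional sense, together with the initial trace condition. Since both solutions start from the same $u_0$, Gronwall's lemma forces this quantity to vanish for every $t\in[0,T]$, so $u(t,x,\alpha)=v(t,x,\beta)$ for almost every $(\omega,t,x,\alpha,\beta)$. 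Taking $v=u$ but with an independent parameter $\beta$, this also shows that $u(t,x,\alpha)$ is almost surely independent of $\alpha$, which is the key rigidity property of the Young-measure-valued solution.

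For the final assertion, I would observe that any stochastic entropy solution in the sense of Definition \ref{defi:stoc-entropy} can trivially be viewed as an $\alpha$-independent generalized entropy solution (its entropy inequality is just \eqref{eq: generalised entropy inequality} with the $\alpha$-integral removed). By the uniqueness of generalized entropy solutions just proved, any such classical solution must coincide almost surely with the (unique, $\alpha$-independent) generalized entropy solution produced in Theorem \ref{thm:existence}. The main technical obstacle throughout is the careful bookkeeping of the It\^o-L\'evy correction terms after doubling: without the strict contractivity $\lambda^*<1$, the cross compensator pieces cannot be absorbed into the left-hand side, and this is also the structural reason why one must work at the level of generalized solutions rather than attempt a direct pathwise comparison.
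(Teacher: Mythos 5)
There is a genuine gap, and it sits exactly at the point the paper flags as the central difficulty. You propose a direct Kruzkov doubling between two generalized entropy solutions $u(t,x,\alpha)$ and $v(s,y,\beta)$, but the paper states explicitly (in the remark after Definition \ref{defi:stoc-entropy}) that this direct comparison fails for L\'evy noise: because of the nonlocal nature of the It\^o--L\'evy correction and the missing noise--noise interaction, the two entropy inequalities --- in each of which the other solution is frozen as a constant $k$ --- do not contain the cross term needed to control the jump corrections. Concretely, after doubling you are left with the two separate second--order terms $\int_0^1(1-\lambda)\eta^2(u;z)\beta_\vartheta''\big(u-v+\lambda\eta(u;z)\big)\,d\lambda$ and its analogue for $v$; each of these is of order $|\eta(u;z)|$ (not of order $\vartheta$), so they do \emph{not} vanish as $\vartheta\to 0$. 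What saves the argument is completing the square to produce $b^2\beta_\vartheta''(a+\theta b)$ with $a=v-u$ and $b=\eta(v;z)-\eta(u;z)$, which under \ref{A3} is bounded by $2(1-\lambda^*)^{-2}\vartheta\,h_1^2(z)$ and does vanish. That cross term is only accessible if one of the two objects admits a strong It\^o--L\'evy formula; this is why the paper compares the generalized entropy solution $v$ against the \emph{viscous approximation} $u_\eps$ of \eqref{eq:levy_con_laws-viscous} (constructed in the Appendix), computes the stochastic cross contribution $I_3$ explicitly, combines it with the correction terms $I_4+J_4$, and only then sends $\vartheta\to 0$, $\eps\to 0$, etc., to reach the Kato inequality \eqref{inq:Kato}. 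Your invocation of $\lambda^*<1$ is the right ingredient, but it is applied to a quantity your scheme never produces.

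A second, related error: you claim the martingale integrals vanish in expectation ``because $v(s,y,\beta)$ is independent of the Poisson increments relevant for $u$.'' Both solutions are driven by the \emph{same} compensated Poisson random measure, so there is no such independence; moreover, with the time mollifier $\rho_{\delta_0}$ supported so that $s\ge t$, the frozen value $v(s,y)$ is not $\mathcal{F}_t$--measurable, so the expectation of the stochastic integral against $\tilde N(dz,dt)$ is not trivially zero. In the paper this term ($I_3$) is nonzero and is precisely the source of the noise--noise cross term described above; if it really vanished, the proof could not close. Your final paragraph (a stochastic entropy solution is an $\alpha$--independent generalized one, hence coincides with the unique generalized solution) is fine, but the core uniqueness argument as you describe it would not go through without replacing the solution--versus--solution doubling by the solution--versus--viscous--approximation comparison.
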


\begin{rem}
Note that we need the assumption \ref{A1} to get entropy solution for the initial data in 
$L^2(\R^d)$ to control the multi-linear integrals terms. The assumption \ref{A3} is needed to handle the nonlocal nature of the entropy inequalities. Boundedness of $\eta$ is needed to validate Proposition \ref{prop:discrete entropy ineq}.
\end{rem}

Throughout this paper, we use the letter $C$  to denote various generic constant which may change line to line. We denote by $c_f$ 
 the Lipschitz constant of $f$ and $c_{\eta}$, the finite constant (which exists thanks to \ref{A3}) as $ \displaystyle c_{\eta}= \int_{\mathbf{E}} h^2_1(z)\,m(dz)$.  The Euclidean norm on $\R^d$ is denoted by 
 $|\cdot|$.
 
\section{Flux-splitting finite volume scheme}\label{sec:scheme}
Our main point of interest is numerical approximation for the problem \eqref{eq:levy_con_laws}. Let us first introduce the space 
discretization by finite volumes (control volumes). For that we need to recall the definition of so called  admissible meshes for finite 
volume scheme (cf. \cite{gallouet00}).

\begin{defi}[Admissible mesh] \label{def:admissible mesh}
An admissible mesh $\mathcal{T}$ of $\R^d$  is
 a family of disjoint polygonal connected subset of $\R^d$ satisfying the following:
 \begin{itemize}
  \item [i)] $\R^d$ is the union of the closure of the elements (called control volume) of $\mathcal{T}$.
\item[ii)] The common interface of any
two elements of $\mathcal{T}$  is included in a hyperplane of $\R^d$.
\item[iii)] There exists a nonnegative constant $\alpha$ such that 
\begin{align}\label{cond:admissible mesh}
\begin{cases}
 \alpha h^d \le |K|, \\
 \displaystyle |\partial K | \le  \frac{1}{\alpha}h^{d-1}, \quad \forall K\in \mathcal{T},
 \end{cases}
\end{align}
where $h= \sup\Big\{ \text{diam} (K): K\in \mathcal{T}\Big\} < + \infty$, $|K|$ denotes the $d$-dimensional Lebesgue measure of $K$, and 
$|\partial K |$ represents the $(d-1)$-dimensional Lebesgue measure of $\partial K $.
 \end{itemize}
\end{defi}

In the sequel, we denote the followings:
\begin{itemize}
 \item  $\mathcal{E}_K$: the set of interfaces of the control volume $K$.
 \item $\mathcal{N}(K)$: the set of control volumes neighbors of the control volume $K$.
 \item $K|L$: the common interface between $K$ and $L$ for any $L\in \mathcal{N}(K)$.
 \item $\mathcal{E}$: the set of all the interfaces of the mesh.
 \item $n_{K,\,\sigma}$: the unit normal to the interface $\sigma$, outward to the control volume $K$, for any $\sigma \in \mathcal{E}_K$.
\end{itemize}
Consider an admissible mesh $\T$ in the sense of Definition \ref{def:admissible mesh}. In order to discretize the time variable, we split the time interval $[0,T]$  as follows: Let $N$ be a positive integer and we set $\Delta t= \frac{T}{N}$.
 Define $t_n = n \Delta t,\,\, n=0,1,\cdots,N$. Then $\{t_n:\, n=0,1,\cdots,N\}$ splits the time interval $[0,T]$
 into equal step with a length equal to $\Delta t$.
 
 It is well known that, the main idea behind flux-splitting finite volume method is to express a flux function  $f$ as the sum of a
   nondecreasing function $f_1$ and a non increasing function $f_2$. Since the flux function $f$ is Lipschitz continuous such a decomposition 
   is always possible. 
   
 We propose the following flux-splitting finite volume scheme to approximate the solution of \eqref{eq:levy_con_laws}:
   for any $K\in \T$, and $n\in \{0,1,2,\cdots, N-1\}$, we define the discrete unknowns $u_K^n$ as follows
\begin{align}
    u_K^0 & =\frac{1}{|K|} \int_{K} u_0(x)\,dx,\notag \\
     \frac{|K|}{\Delta t} \big(u_{K}^{n+1}-u_K^n \big)  + \sum_{\underset{\sigma=K|L} {\sigma\in \E_K}}&  |\sigma|  \Big\{ 
    (\vec{v}\cdot n_{K,\sigma})^+ \big( f_1(u_K^n) +f_2(u_L^n)\big)- (\vec{v}\cdot n_{K,\sigma})^- \big( f_1(u_L^n) +f_2(u_K^n)\big) \Big\} \notag  \\
   &= \frac{|K|}{\Delta t} \int_{t_n}^{t_{n+1}} \int_{\mathbf{E}} \eta(u_K^n;z)\tilde{N}(dz,dt),\label{sceme:fin volume} 
\end{align}
where, by denoting $d\nu$ the $d-1$ dimensional Lebesgue measure
\begin{align*}
  (\vec{v}\cdot n_{K,\sigma})^+ & = \frac{1}{\Delta t |\sigma|}\int_{t_n}^{t_{n+1}}\int_{\sigma} \big(\vec{v}(t,x)\cdot n_{K,\sigma}\big)^+ \,d\nu(x)\,dt, \\
(\vec{v}\cdot n_{K,\sigma})^- & = \frac{1}{\Delta t |\sigma|}\int_{t_n}^{t_{n+1}}\int_{\sigma} \big(\vec{v}(t,x)\cdot n_{K,\sigma}\big)^-\,d\nu(x)\,dt. 
\end{align*}
Since $\text{div}_x\vec{v}(t,x)=0$ for any $(t,x) \in \Pi_T$, an elementary estimate yields 
\begin{align}
\sum_{\sigma\in \E_K}|\sigma| \vec{v}\cdot n_{K,\sigma}=0.\label{sum:zero-flux}
\end{align}
We define approximate finite volume solution on $\Pi_T$ as a piecewise constant given by
\begin{align}
 u_{\T,\Delta t}^h(t,x)= u_K^n\,\, \text{for}\,\, x\in K \,\,\text{and}\,\, t\in [t_n, t_{n+1}).\label{eq:fin-approx}
\end{align} 
\begin{rem}
In view of the properties of stochastic integral with respect to compensated Poisson random measure, each $u_K^n$ is $\mathcal{F}_{n \Delta t}$
- measurable for $K\in \mathcal{T}$ and $n\in \{0,1,\cdots, N \}$. Thus, $u_{\T,\Delta t}^h(t,\cdot)$ is an $L^2(\R^d)$-valued $\mathcal{F}_t$-
predictable stochastic process as $u_0$ satisfies \ref{A4}. 
\end{rem}
Finally, we state the main theorem of this paper.
\begin{maintheorem*}\label{thm:maintheorem}
 Let the assumptions \ref{A1}-\ref{A4} be true and $\T$ be an admissible mesh on $\R^d$  with size $h$ 
 in the sense of Definition \ref{def:admissible mesh}.
 Let $\Delta t$ be the time step as discuss above and assume that 
 \begin{align*}
  \frac{\Delta t}{h} \goto 0 \,\,\text{as}\,\, h \goto 0.
 \end{align*}
Let $u_{\T,\Delta t}^h(t,x)$ be the finite volume approximation as prescribed by
 \eqref{eq:fin-approx}. Then, there exists a $L^2(\R^d\times (0,1))$-valued $\{\mathcal{F}_t:t\ge 0\}$-predictable process
 $u=u(t,x,\alpha)$ such that 
 \begin{itemize}
  \item[i)] $ u(t,x,\alpha)$ is a generalized entropy solution of \eqref{eq:levy_con_laws} and $ u_{\T,\Delta t}^h(t,x)\goto u(t,x,\alpha)$ in
  the sense of Young measure.
  \item[(ii)] $u_{\T,\Delta t}^h(t,x)\goto \bar{u}(t,x)$ in $L^p_{\text{loc}}(\R^d; L^p(\Omega \times (0,T)))$ for $1\le p < 2$, where $\bar{u}(t,x)=\int_{0}^1 u(t,x,\alpha)\,d\alpha$ is the 
  unique stochastic entropy solution of \eqref{eq:levy_con_laws}.
 \end{itemize}
\end{maintheorem*}
\begin{rem}
Under the CFL condition 
\begin{align}
\Delta t \le \frac{(1-\xi)\alpha^2 h}{c_f V}, \,\,\text{for some}\,\,\xi \in (0,1),\label{cond:cfl}
\end{align}
we have uniform moment estimate  and  weak $BV$ estimate  on $u_{\T,\Delta t}^h$  for $\xi=0$ and $\xi \in (0,1)$ respectively 
(see Lemmas \ref{lem:moment estimate} and \ref{lem:weak bv}). In the deterministic case, condition \eqref{cond:cfl} is sufficient to establish 
the convergence of approximate solutions to the unique entropy solution of the problem. But in the stochastic case, only this condition 
is not enough and hence, we assume the stronger condition, namely $\frac{\Delta t}{h} \goto 0 $ as $h \goto 0$.
\end{rem}
\begin{rem}
Since every Lipschitz continuous function can be expressed as the sum of nondecreasing function and a non increasing one, it suffices to prove the main theorem (cf.~Theorem \ref{thm:maintheorem})
for a nondecreasing Lipschitz continuous flux function $f$.
\end{rem}
For a nondecreasing Lipschitz continuous function $f$, the finite volume scheme \eqref{sceme:fin volume} reduces to an upwind finite volume scheme
\begin{align}\label{scheme:upwind fin volume}
\begin{cases}
    \displaystyle \frac{|K|}{\Delta t} \big(u_{K}^{n+1}-u_K^n \big)  + \underset{ \sigma\in \E_K}\sum |\sigma| \vec{v}\cdot n_{K,\sigma} f(u_\sigma^n)
    = \frac{|K|}{\Delta t} \int_{t_n}^{t_{n+1}} \int_{\mathbf{E}} \eta(u_K^n;z)\tilde{N}(dz,dt),\\
    u_K^0= \displaystyle\frac{1}{|K|} \int_{K} u_0(x)\,dx,
   \end{cases}
\end{align}
where $u_\sigma^n$ represents the upstream value at time $t_n$ with respect to $\sigma$. More precisely, if $\sigma$ is the interface 
between two control volumes $K$ and $L$, then 
\begin{align*}
u_\sigma^n=
 \begin{cases}
  u_K^n \,\,\text{if}\,\,\vec{v}\cdot n_{K,\sigma} \ge 0,\\
  u_L^n \,\,\text{if}\,\,\vec{v}\cdot n_{K,\sigma} < 0.
 \end{cases}
\end{align*}
The upwind finite volume approximate solution $u_{\T,\Delta t}^h(t,x)$ on $\Pi_T$ is defined as  
\begin{align}
 u_{\T,\,\Delta t}^h(t,x)= u_K^n\,\, \text{for}\,\, x\in K \,\,\text{and}\,\, t\in [t_n, t_{n+1}), \label{eq:upwind fin-approx}
\end{align}
where the discrete unknown $u_K^n,
K\in \T, n\in \{0,1,\cdots,N-1\}$ is computed from \eqref{scheme:upwind fin volume}.

\section{A priori estimates} \label{sec:apriori-estimate}
This section is devoted to {\it a priori} estimates for the upwind finite volume approximate solution $u_{\T,\Delta t}^h$ which will be 
very useful to prove its convergence. We start with the following  lemma which is essentially a uniform moment estimate.
\begin{lem} \label{lem:moment estimate}
 Let $T>0$ and the assumptions \ref{A1}-\ref{A4} hold. Consider an
  admissible mesh $\T$ on $\R^d$ with size $h$ in the sense of Definition \ref{def:admissible mesh}.
  Let $\Delta t=\frac{T}{N}$ be the time step for some $N\in \mathbb{N}^*$, satisfying the Courant-Friedrichs-Levy (CFL) condition 
  \begin{align*}
   \Delta t \le \frac{\alpha^2 h}{c_f V}.
  \end{align*}
  Then the upwind finite volume approximate solution $u_{\T,\,\Delta t}^h$ satisfies the following bound
  \begin{align}
   \| u_{\T,\,\Delta t}^h\|_{L^\infty(0,T; L^2(\Omega\times \R^d))}^2 \le e^{c_{\eta}T} \mathbb{E}\big[||u_0||_2^2\big].
   \label{estimate: moment-1}
  \end{align}
  As a consequence, we see that $u_{\T,\,\Delta t}^h$ satisfies the following bound
   \begin{align*}
   \| u_{\T,\,\Delta t}^h\|_{ L^2(\Omega\times \Pi_T)}^2 \le T\,e^{c_{\eta}T} \mathbb{E}\big[||u_0||_2^2\big].
  \end{align*}
\end{lem}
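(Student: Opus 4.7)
My plan is to establish the one-step energy estimate
\[
\mathbb{E}\Big[\textstyle\sum_{K\in\T} |K|(u_K^{n+1})^2\Big] \le (1+c_\eta\Delta t)\,\mathbb{E}\Big[\textstyle\sum_{K\in\T} |K|(u_K^n)^2\Big]
\]
for each $0\le n\le N-1$ and iterate, using $(1+c_\eta\Delta t)^N\le e^{c_\eta T}$ and $\sum_K |K|(u_K^0)^2\le \|u_0\|_2^2$ (Jensen applied to each cell average $u_K^0 = |K|^{-1}\int_K u_0\,dx$). Since $u_{\T,\Delta t}^h(t,\cdot)$ is piecewise constant with $\|u_{\T,\Delta t}^h(t,\cdot)\|_2^2=\sum_K |K|(u_K^n)^2$ on $[t_n,t_{n+1})$, this delivers \eqref{estimate: moment-1}; integrating in $t\in[0,T]$ then yields the $L^2(\Omega\times\Pi_T)$ consequence.

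For the one-step estimate, I would decompose \eqref{scheme:upwind fin volume} as $u_K^{n+1}=v_K^{n+1}+M_K^{n+1}$, where $v_K^{n+1}=u_K^n-\tfrac{\Delta t}{|K|}\sum_{\sigma\in\E_K}|\sigma|\vec v\cdot n_{K,\sigma}f(u_\sigma^n)$ is the deterministic upwind update and $M_K^{n+1}=\int_{t_n}^{t_{n+1}}\int_{\mathbf E}\eta(u_K^n;z)\tilde N(dz,dt)$ is the noise increment. Both $u_K^n$ and $v_K^{n+1}$ are $\mathcal F_{t_n}$-measurable while $M_K^{n+1}$ is a mean-zero martingale increment conditional on $\mathcal F_{t_n}$, so squaring and taking expectation kills the cross term. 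The It\^o isometry for the compensated Poisson measure $\tilde N$, together with the Lipschitz bound $|\eta(u;z)|\le h_1(z)|u|$ derivable from \ref{A3} (using $\lambda^*<1$ and $\eta(0;z)=0$), then yields $\mathbb{E}[\sum_K|K|(M_K^{n+1})^2]\le c_\eta\Delta t\,\mathbb{E}[\sum_K|K|(u_K^n)^2]$.

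The remaining ingredient is the deterministic $L^2$-stability $\sum_K|K|(v_K^{n+1})^2\le \sum_K|K|(u_K^n)^2$. Using $\mathrm{div}_x\vec v=0$ (so $\sum_{\sigma\in\E_K}|\sigma|\vec v\cdot n_{K,\sigma}=0$) together with the monotonicity of $f$, I would rewrite
\[
 v_K^{n+1} = \Big(1-\sum_{\sigma\in\E_K}\mu_{K,\sigma}\Big)u_K^n + \sum_{\sigma\in\E_K}\mu_{K,\sigma}u_L^n,\qquad \mu_{K,\sigma}=\frac{\Delta t|\sigma|}{|K|}(\vec v\cdot n_{K,\sigma})^-\,\frac{f(u_L^n)-f(u_K^n)}{u_L^n-u_K^n}\ge 0,
\]
with $\sum_\sigma\mu_{K,\sigma}\le \Delta t V c_f/(\alpha^2 h)\le 1$ by the admissibility conditions \eqref{cond:admissible mesh} and the CFL hypothesis. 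Jensen's inequality for $\phi(u)=u^2$ then bounds $(v_K^{n+1})^2$ by a convex combination of $(u_K^n)^2$ and the $(u_L^n)^2$, and multiplying by $|K|$, summing over $K$, and rearranging by interfaces $\sigma=K|L$ yields the desired inequality.

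The main obstacle is precisely that interface rearrangement: after Jensen one is left having to show
\[
\sum_{\sigma=K|L}|\sigma|(\vec v\cdot n_{K,\sigma})(u_K^n+u_L^n)(f(u_L^n)-f(u_K^n))\ge 0,
\]
a quantity whose summand has no fixed pointwise sign. Its nonnegativity is the Crandall--Majda-type discrete entropy-dissipation identity of the monotone upwind scheme applied to the convex entropy $u\mapsto u^2$, and it exploits simultaneously $\mathrm{div}_x\vec v=0$ and the monotonicity of $f$; this is the energy-type discrete integration by parts standard for monotone upwind finite volume schemes under CFL, in the spirit of \cite{gallouet00,bauzet-fluxsplitting}.
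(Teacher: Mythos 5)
Your overall architecture is the same as the paper's: prove the one-step bound $\mathbb{E}[\sum_K|K|(u_K^{n+1})^2]\le(1+c_\eta\Delta t)\,\mathbb{E}[\sum_K|K|(u_K^n)^2]$, iterate, and use Jensen for $u_K^0$. Your treatment of the noise is also fine: the cross term vanishes because $v_K^{n+1}$ is $\mathcal{F}_{t_n}$-measurable while $M_K^{n+1}$ is a mean-zero increment, and the It\^{o}--L\'{e}vy isometry together with $|\eta(u;z)|\le\lambda^*|u|h_1(z)$ gives the $c_\eta\Delta t$ contribution. The gap is entirely in the deterministic step. The convex-combination-plus-Jensen argument does not prove $\sum_K|K|(v_K^{n+1})^2\le\sum_K|K|(u_K^n)^2$: as you correctly compute, it reduces the claim to
\begin{align*}
S:=\sum_{\sigma=K|L}|\sigma|\,(\vec v\cdot n_{K,\sigma})\,(u_K^n+u_L^n)\bigl(f(u_L^n)-f(u_K^n)\bigr)\ \ge\ 0,
\end{align*}
and this is \emph{false} in general; it is not the discrete entropy-dissipation identity you invoke. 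Counterexample: $d=1$, unit cells, $\vec v\equiv 1$ (divergence free), $f$ any smooth nondecreasing Lipschitz function with $f(-1)=f(0)=0$ and $f(1)=1$, and data $u_{K_0}=-1$, $u_{K_1}=1$, all other cells $0$. Then every interface term of $S$ vanishes except the one between $K_1$ and $K_2$, which equals $(1+0)\,(f(0)-f(1))=-1$, so $S=-1<0$. (The scheme itself is still $L^2$-stable here — a direct computation gives $\sum_K(v_K^{n+1})^2=2-2\Delta t+2\Delta t^2\le 2$ — so it is your proof, not the lemma, that fails.) The structural reason is that Jensen applied to $r\mapsto r^2$ discards exactly the numerical dissipation of the upwind scheme, and $L^2$-stability of this scheme genuinely needs that dissipation: the antisymmetric interface flux term has no sign and is not cancelled by $\mathrm{div}_x\vec v=0$ alone.

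The paper's route repairs precisely this point. One multiplies the update by $u_K^n$, uses $ab=\frac12[(a+b)^2-a^2-b^2]$ so that the squared increment $\frac{|K|}{2}(u_K^{n+1}-u_K^n)^2$ appears explicitly, bounds it by Cauchy--Schwarz and \eqref{estim:normal} by $\frac{\Delta t V}{2\alpha^2h}\sum_\sigma|\sigma|(\vec v\cdot n_{K,\sigma})^-(f(u_L^n)-f(u_K^n))^2$, and then controls the transport term with the convexity inequality of \cite[Lemma 4.5]{gallouet00},
\begin{align*}
b\bigl(f(b)-f(a)\bigr)\ \ge\ \phi(b)-\phi(a)+\frac{1}{2c_f}\bigl(f(b)-f(a)\bigr)^2,\qquad \phi(a)=\int_0^a s f'(s)\,ds .
\end{align*}
Under the CFL condition $\frac{\Delta tV}{2\alpha^2h}\le\frac{1}{2c_f}$ the dissipation term $\frac{1}{2c_f}(f(b)-f(a))^2$ absorbs the squared increment, and what remains is $\sum_K\sum_{\sigma}|\sigma|(\vec v\cdot n_{K,\sigma})^-(\phi(u_L^n)-\phi(u_K^n))$, which vanishes after summation over the mesh by two applications of $\sum_{\sigma\in\E_K}|\sigma|\vec v\cdot n_{K,\sigma}=0$. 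In effect, the entropy that telescopes is $\phi$, not $u\mapsto u^2$, and the difference between the two is exactly the dissipation your Jensen step throws away. You should replace your Step 3 by this argument (or an equivalent one producing the $(f(u_L^n)-f(u_K^n))^2$ dissipation term); the rest of your proof then goes through.
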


\begin{proof}
 To prove \eqref{estimate: moment-1}, it is enough to prove: for $n\in \{0,1,\cdots, N-1\}$, the following property holds
 \begin{align}
  \sum_{K\in\T} |K| \mathbb{E}\big[ (u_K^n)^2\big] \le ( 1+ \Delta t c_{\eta})^n \mathbb{E}\big[||u_0||_2^2\big].\label{estimate:Pn}
 \end{align}
 Observe that 
 \begin{align*}
  \sum_{K\in \T} |K| \mathbb{E}\big[ (u_K^0)^2\big]&= \sum_{K\in \T} |K| \mathbb{E}\Big[ \big(\frac{1}{|K|}\int_{K} u_0(x)\,dx\big)^2\Big]\\
  & \le \mathbb{E}\big[||u_0||_2^2\big]= ( 1+ \Delta t c_{\eta})^0 \mathbb{E}\big[||u_0||_2^2\big].
 \end{align*}
 Set $n\in \{ 0,1,\cdots, N-1\}$ and suppose that \eqref{estimate:Pn} holds for $n$. We will show that \eqref{estimate:Pn} holds for $n+1$. In view of \eqref{sum:zero-flux}, one has
 $\displaystyle \underset{\sigma\in \E_K}\sum |\sigma| \vec{v}\cdot n_{K,\sigma} f(u_K^n)=0$ and hence the scheme \eqref{scheme:upwind fin volume} reduces to
 \begin{align*}
     \frac{|K|}{\Delta t} \big(u_{K}^{n+1}-u_K^n \big)  + \sum_{ \sigma\in \E_K} |\sigma|\vec{v}\cdot n_{K,\sigma}\big(f(u_\sigma^n)-f(u_K^n)\big)
    = \frac{|K|}{\Delta t} \int_{t_n}^{t_{n+1}} \int_{\mathbf{E}} \eta(u_K^n;z)\tilde{N}(dz,dt).
\end{align*}
Again, in view of the definition of $u_\sigma^n$, the above finite volume  scheme is equivalent to 

 \begin{align}
     \frac{|K|}{\Delta t} \big(u_{K}^{n+1}-u_K^n \big)  + \sum_{\underset{\sigma=K|L} {\sigma\in \E_K}} |\sigma|(\vec{v}\cdot n_{K,\sigma})^-
     \big(f(u_K^n)-f(u_L^n)\big)
    = \frac{|K|}{\Delta t} \int_{t_n}^{t_{n+1}} \int_{\mathbf{E}} \eta(u_K^n;z)\tilde{N}(dz,dt).\label{scheme:upwind fin volume-0}
\end{align}
Multiplying \eqref{scheme:upwind fin volume-0} by $u_K^n$ and using the fact that $ab=\frac{1}{2}[(a+b)^2 -a^2-b^2]$ for any $a,b\in \R$, we obtain 
\begin{align*}
 &\frac{|K|}{2}\Big[ (u_K^{n+1})^2 -(u_{K}^n)^2\Big] = \frac{|K|}{2} \big(u_K^{n+1}-u_K^n\big)^2 - \Delta t \sum_{\underset{\sigma=K|L} {\sigma\in \E_K}} |\sigma|
 (\vec{v}\cdot n_{K,\sigma})^-
     \big(f(u_K^n)-f(u_L^n)\big)u_K^n \\
     & \hspace{4cm} + |K|  \int_{t_n}^{t_{n+1}} \int_{\mathbf{E}} \eta(u_K^n;z)u_K^n\tilde{N}(dz,dt).
\end{align*}
 Taking expectation, and  using the fact that
for any  two constants $T_1, T_2\ge 0$ with $T_1<T_2$,$$
\mathbb{E}\Big[X_{T_1}\int_{T_1}^{T_2} \int_{\mathbf{E}}
\zeta(t,z)\,\tilde{N}(dz,dt)\Big] = 0,$$ where $\zeta$ is a predictable integrand with 
$ \displaystyle \mathbb{E} \Big[\int_0^T \int_{\mathbf{E}}\zeta^2(t,z)\, m(dz)\, dt\Big] < + \infty$ 
and $X$ is an adapted process, we obtain, thanks to It\^{o} isometry
\begin{align*}
 \frac{|K|}{2} \mathbb{E}\Big[ (u_K^{n+1})^2 -(u_{K}^n)^2\Big]& = 
 \frac{|K|}{2} \mathbb{E}\Big[ \int_{t_n}^{t_{n+1}} \int_{\mathbf{E}} \eta^2(u_K^n;z)\,m(dz)\,dt\Big] \\
 & + \frac{(\Delta t)^2}{2|K|} \mathbb{E}\Big[ \Big( \sum_{\underset{\sigma=K|L} {\sigma\in \E_K}} |\sigma|(\vec{v}\cdot n_{K,\sigma})^-
     \big(f(u_K^n)-f(u_L^n)\big)\Big)^2\Big] \\
  &- \Delta t \mathbb{E}\Big[ \sum_{\underset{\sigma=K|L} {\sigma\in \E_K}} |\sigma| (\vec{v}\cdot n_{K,\sigma})^-
     \big(f(u_K^n)-f(u_L^n)\big)u_K^n\Big],
\end{align*}
where we have used  \eqref{scheme:upwind fin volume-0} to replace $u_K^{n+1}-u_K^n$.
Note that, thanks to \eqref{cond:admissible mesh}, the following inequality holds
 \begin{align*}
  \frac{|\partial K|}{|K|} \le \frac{1}{\alpha^2 h}.
 \end{align*}
Therefore, $ \displaystyle 
 \sum_{\sigma \in \E_K} |\sigma| |\vec{v}\cdot n_{K,\sigma}| \le V |\partial K|\le \frac{V}{\alpha^2 h}|K|$,
and hence 
\begin{align}
 \sum_{\sigma \in \E_K} |\sigma|(\vec{v}\cdot n_{K,\sigma})^- \le \frac{V}{\alpha^2 h}|K|.\label{estim:normal}
\end{align}
We use Cauchy-Schwartz inequality, the assumption \ref{A3} on $\eta$, and \eqref{estim:normal} to have
\begin{align}
 \frac{|K|}{2} \mathbb{E}\Big[ (u_K^{n+1})^2 -(u_{K}^n)^2\Big]
     & \le \Delta t E\Bigg[ \sum_{\underset{\sigma=K|L} {\sigma\in \E_K}} |\sigma|(\vec{v}\cdot n_{K,\sigma})^-
    \Big\{ \frac{\Delta t V}{2 \alpha^2 h} \big(f(u_K^n)-f(u_L^n)\big)^2 -\big(f(u_K^n)-f(u_L^n)\big) u_K^n \Big\}\Bigg] \notag \\
     &\hspace{3cm} + \Delta t\, c_{\eta} \frac{|K|}{2} \mathbb{E}\big[(u_K^n)^2\big] \notag \\
     &\equiv \mathbf{A} + \Delta t\, c_{\eta} \frac{|K|}{2} \mathbb{E}\big[(u_K^n)^2\big] . \label{inequality:Pn-1}
\end{align}
To estimate $\mathbf{A}$, we use \cite[Lemma $4.5$]{gallouet00} and have: for any $a,b \in \R$
\begin{align*}
  b\big(f(b)-f(a)\big)\ge   \phi(b)-\phi(a) + \frac{1}{2 c_f} \big( f(b)-f(a)\big)^2,
\end{align*}
where $ \displaystyle \phi(a)=\int_0^a sf^\prime(s)\,ds$. Note that $0\le \phi(a)\le c_f a^2$.
Thus using the CFL condition $ \displaystyle \frac{\Delta t V}{2 \alpha^2 h}\le \frac{1}{2c_f}$, we get
\begin{align}
 &\mathbf{A} \le  \mathbb{E}\Bigg[ \sum_{\underset{\sigma=K|L} {\sigma\in \E_K}}  |\sigma|(\vec{v}\cdot n_{K,\sigma})^- \big(\phi(u_L^n)-\phi(u_K^n)\big)\Bigg]. \label{inequality:Pn-2}
\end{align}
Combining \eqref{inequality:Pn-1} and \eqref{inequality:Pn-2}, we obtain the following inequality after summing over all $K\in \mathcal{T}$
\begin{align*}
 &\frac{1}{2} \sum_{K\in \T} |K| \mathbb{E}\Big[ (u_K^{n+1})^2 -(u_{K}^n)^2\Big] \notag \\
 & \le  \frac{\Delta t\, c_{\eta}}{2}  \sum_{K\in \T} |K| \mathbb{E}\big[(u_K^n)^2\big] 
 + \sum_{K\in \T}  \Delta t  \sum_{\underset{\sigma=K|L} {\sigma\in \E_K}} |\sigma|(\vec{v}\cdot n_{K,\sigma})^- \mathbb{E}\big[ \phi(u_L^n)-\phi(u_K^n)\big] \\
 & \equiv \frac{\Delta t\, c_{\eta}}{2}  \sum_{K\in \T} |K| \mathbb{E}\big[(u_K^n)^2\big]  + \mathbf{B}.
\end{align*}
Since $\text{div}_x \vec{v}(t,x)=0$ for any $(t,x)\in \Pi_T$, one can show that $\mathbf{B}=0$, yielding  
\begin{align*}
  \sum_{K\in \T} |K| \mathbb{E}\big[ (u_K^{n+1})^2 \big]
 & \le \big( 1+ \Delta t\, c_{\eta}\big)  \sum_{K\in \T} |K| \mathbb{E}\big[(u_K^n)^2\big] \le \big( 1+ \Delta t\, c_{\eta}\big)^{n+1}  \mathbb{E}\big[||u_0||_2^2].
 \end{align*}
 Thus \eqref{estimate:Pn} holds by mathematical induction. In other words,
 \eqref{estimate: moment-1} holds as well. As a consequence, we have
 \begin{align*}
  \| u_{\T,\Delta t}^h\|_{ L^2(\Omega\times \Pi_T)}^2 &= \sum_{n=0}^{N-1}\sum_{K\in \T} \Delta t |K| \mathbb{E}\big[ (u_K^n)^2\big]
   \le T\,e^{c_{\eta}T}\mathbb{E}\big[||u_0||_2^2\big].
 \end{align*}
 This completes the proof.
\end{proof}
\begin{lem}[Weak BV estimate]\label{lem:weak bv}
  Suppose $T>0$, and the assumptions \ref{A1}-\ref{A4} be true. Let $\T$ be an admissible mesh with size $h$ in the sense of Definition \ref{def:admissible mesh}.
  Let $\Delta t=\frac{T}{N}$ be the time step for some $N\in \mathbb{N}^*$, satisfying the CFL condition 
  \begin{align}
   \Delta t \le \frac{(1-\xi)\alpha^2 h}{c_f V}, \,\,\text{for some}\,\,\xi \in (0,1). \label{cond:CFL-1}
  \end{align}
  Let $u_K^n:\,K\in \T,\,\, n\in \{0,1,\cdots, N-1\}$ be the discrete unknowns as in
  \eqref{scheme:upwind fin volume}. Then the followings hold:
  \begin{itemize}
   \item [a)] There exists a positive constant $C $, only depending on $T, u_0, \xi, c_f,c_{\eta}$ such that
  \begin{align}
   \sum_{K\in\T}\sum_{n=0}^{N-1} \Delta t \sum_{\sigma\in \E_K} |\sigma| 
  |\vec{v}.n_{K,\sigma}| \mathbb{E}\Big[ \big( f(u_\sigma^n)-f(u_K^n)\big)^2\Big] \le C. \label{weak bv-0}
  \end{align}
  \item[b)] Let $R>0$ be such that $h< R$. Define $\T_R=\{ K\in \T: K\subset B(0,R)\}$ and
  $\E^R$ be the set of all interfaces of the mesh $\T_R$. Then there exists a positive constant  $C_1 $, only depending on $ R, d, T, u_0, \xi, c_f, c_{\eta}$ such that
  \begin{align}
  \sum_{n=0}^{N-1} \Delta t \sum_{\sigma\in \E^R} |\sigma|
 |\vec{v}\cdot n_{K,\sigma}| \mathbb{E}\Big[ \big|f(u_\sigma^n)-f(u_K^n)\big|\Big] \le C_1 h^{-\frac{1}{2}}. \label{weak bv-02}
  \end{align}
  \end{itemize}
\end{lem}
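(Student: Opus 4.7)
My plan is to derive the $L^2$-type weak BV estimate (a) by sharpening the $L^2$-stability computation already carried out in Lemma \ref{lem:moment estimate}, and then to deduce the $L^1$-type bound (b) from (a) by a double application of Cauchy--Schwarz together with an interface-counting argument based on the mesh admissibility condition \eqref{cond:admissible mesh}.

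For part (a), I would repeat the step in the proof of Lemma \ref{lem:moment estimate} where \eqref{scheme:upwind fin volume-0} is multiplied by $u_K^n$, expectation is taken, and It\^o's isometry eliminates the martingale. Under the strengthened CFL condition \eqref{cond:CFL-1}, the factor $\Delta t V/(2\alpha^2 h)$ is bounded by $(1-\xi)/(2c_f)$, so that the key inequality $b(f(b)-f(a))\ge \phi(b)-\phi(a)+\tfrac{1}{2c_f}(f(b)-f(a))^2$ from \cite[Lemma 4.5]{gallouet00} now produces an extra dissipation term $\tfrac{\xi}{2c_f}(f(u_K^n)-f(u_L^n))^2$ in the analogue of \eqref{inequality:Pn-2}. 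Summing over $K\in\T$ and $n=0,\dots,N-1$, the $\phi$-contribution still telescopes to zero by the divergence-free identity \eqref{sum:zero-flux} (the ``$\mathbf{B}=0$'' vanishing from Lemma \ref{lem:moment estimate}), the noise term is absorbed using the discrete Gronwall bound \eqref{estimate:Pn}, and the remainder yields a uniform estimate on
\[
\sum_{n=0}^{N-1}\Delta t\sum_{K\in\T}\sum_{\underset{\sigma=K|L}{\sigma\in\E_K}}|\sigma|(\vec{v}\cdot n_{K,\sigma})^-\mathbb{E}\big[(f(u_K^n)-f(u_L^n))^2\big].
\]
Finally, because $n_{L,\sigma}=-n_{K,\sigma}$, pairing the two cells adjacent to each interface converts $(\vec{v}\cdot n_{K,\sigma})^-$ into $|\vec{v}\cdot n_{K,\sigma}|$, while the upwind convention makes $(f(u_\sigma^n)-f(u_K^n))^2$ vanish on the side where $\vec{v}\cdot n_{K,\sigma}\ge 0$ and equal $(f(u_L^n)-f(u_K^n))^2$ on the other side; this produces exactly \eqref{weak bv-0}.

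For part (b), I would first apply Jensen, $\mathbb{E}[|X|]\le (\mathbb{E}[X^2])^{1/2}$, and then Cauchy--Schwarz on the discrete sum indexed by $(\sigma,n)$ to split the left-hand side of \eqref{weak bv-02} into the product of $\bigl(\sum_{n,\sigma}\Delta t|\sigma||\vec{v}\cdot n_{K,\sigma}|\mathbb{E}[(f(u_\sigma^n)-f(u_K^n))^2]\bigr)^{1/2}$, controlled by part (a), and the purely geometric factor $\bigl(\sum_{n,\sigma\in\E^R}\Delta t|\sigma||\vec{v}\cdot n_{K,\sigma}|\bigr)^{1/2}$. The latter is bounded by counting: admissibility gives $|\T_R|\le |B(0,R)|/(\alpha h^d)\le CR^d h^{-d}$ and $|\partial K|\le h^{d-1}/\alpha$, so after multiplying by $V$ and summing over $n$ (which contributes $T$), the geometric sum is $O(R^d h^{-1})$, whose square root produces the announced $h^{-1/2}$ factor with a constant depending only on $R,d,T,\xi,c_f,c_\eta$ and $\|u_0\|_2$.

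The step I expect to require the most care is the interface bookkeeping in part (a): verifying that the combined contributions from the two adjacent cells of each $\sigma$ reproduce precisely the symmetric $|\vec{v}\cdot n_{K,\sigma}|$-weighted sum in \eqref{weak bv-0}, and checking that the Gronwall/It\^o isometry constants genuinely depend only on the listed parameters and not on $h$, $\Delta t$, or the mesh. Everything else is a systematic adaptation of the computation already performed in the proof of Lemma \ref{lem:moment estimate}.
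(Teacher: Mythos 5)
Your proposal is correct and follows essentially the same route as the paper: part (a) is obtained by redoing the $L^2$-stability computation of Lemma \ref{lem:moment estimate} under the sharpened CFL condition \eqref{cond:CFL-1}, so that the inequality from \cite[Lemma 4.5]{gallouet00} leaves an uncancelled dissipation term $\frac{\xi}{2c_f}\sum\Delta t\,|\sigma|(\vec{v}\cdot n_{K,\sigma})^-\mathbb{E}[(f(u_K^n)-f(u_L^n))^2]$, which is then rewritten as the symmetric $|\vec{v}\cdot n_{K,\sigma}|$-weighted sum exactly as you describe. For part (b) the paper simply invokes \cite{bauzet-fluxsplitting}, and the Cauchy--Schwarz plus interface-counting argument you outline is precisely what that reference does, so your write-up is if anything more self-contained.
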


\begin{proof}
 Multiplying \eqref{scheme:upwind fin volume-0} by $ \Delta t\,u_K^n$, taking expectation and summing over $n=0,1,\cdots, N-1$ and $K\in \T$, we obtain 
\begin{align*}
     &\sum_{K\in \T} \sum_{n=0}^{N-1} |K| \mathbb{E}\Big[\big(u_{K}^{n+1}-u_K^n \big)u_K^n\Big]
     +  \sum_{K\in \T} \sum_{n=0}^{N-1}  \Delta t|\sigma|(\vec{v}\cdot n_{K,\sigma})^- \sum_{\underset{\sigma=K|L} {\sigma\in \E_K}} \mathbb{E}\Big[
     \big(f(u_K^n)-f(u_L^n)\big)u_K^n\Big]=0 \\
     & \text{i.e.,}\quad  \bar{\mathbf{A}} + \bar{\mathbf{B}}=0.
\end{align*}
Let us first consider $\bar{\mathbf{A}}$. Using the formula $ab=\frac{1}{2}\big[(a+b)^2-a^2-b^2\big]$ and \eqref{scheme:upwind fin volume-0}, we rewrite $\bar{\mathbf{A}}$ as
\begin{align*}
 \bar{\mathbf{A}} & = \frac{1}{2} \sum_{K\in \T} |K| \mathbb{E}\Big[(u_{K}^{N})^2-(u_K^0)^2\Big] - \frac{1}{2} \sum_{K\in \T} \sum_{n=0}^{N-1} |K|
 \mathbb{E}\Big[ \int_{t_n}^{t_{n+1}} \int_{\mathbf{E}} \eta^2(u_K^n;z)\,m(dz)\,dt \Big] \notag \\
 & \hspace{3cm} - \sum_{K\in \T} \sum_{n=0}^{N-1}\frac{(\Delta t)^2}{2|K|}  \mathbb{E}\Bigg[ \Big(  \sum_{\underset{\sigma=K|L} {\sigma\in \E_K}} |\sigma|
 (\vec{v}\cdot n_{K,\sigma})^-\big(f(u_K^n)-f(u_L^n)\big) \Big)^2\Bigg] \notag \\
 & \equiv \bar{\mathbf{A}}_1 + \bar{\mathbf{A}}_2 + \bar{\mathbf{A}}_3.
\end{align*}
Thanks to Cauchy-Schwartz inequality, the CFL condition \eqref{cond:CFL-1}, the inequality \eqref{estim:normal}, and the assumption \ref{A3}
\begin{align*}
 \bar{\mathbf{A}}_3& \ge - \frac{1}{2}   \sum_{K\in \T} \sum_{n=0}^{N-1} \Delta t  \frac{(1-\xi)}{c_f} \mathbb{E}\Bigg[
 \sum_{\underset{\sigma=K|L} {\sigma\in \E_K}} |\sigma|(\vec{v}\cdot n_{K,\sigma})^- \big(f(u_K^n)-f(u_L^n)\big)^2\Bigg], \\
\bar{\mathbf{A}}_2 & \ge -\frac{1}{2} \sum_{K\in \T} \sum_{n=0}^{N-1}  \Delta t |K| c_{\eta} \mathbb{E}\big[(u_K^n)^2\big].
\end{align*}
Therefore, by using Lemma \ref{lem:moment estimate}, we arrive at
\begin{align*}
 \bar{\mathbf{A}} &\ge -\frac{1}{2} T\,c_{\eta} e^{T\,c_{\eta}} \mathbb{E}\big[ ||u_0||_2^2\big] - \frac{1}{2} \mathbb{E}\big[||u_0||_2^2\big]
 - \frac{(1-\xi)}{2\,c_f} \sum_{K\in \T} \sum_{n=0}^{N-1} \Delta t \sum_{\underset{\sigma=K|L} {\sigma\in \E_K}} |\sigma|
 (\vec{v}\cdot n_{K,\sigma})^- \mathbb{E}\Big[ \big(f(u_K^n)-f(u_L^n)\big)^2\Big] \notag \\
 & \ge - \frac{(1-\xi)}{2\,c_f} \sum_{K\in \T} \sum_{n=0}^{N-1} \Delta t \sum_{\underset{\sigma=K|L} {\sigma\in \E_K}} |\sigma|
 (\vec{v}\cdot n_{K,\sigma})^- \mathbb{E}\Big[ \big(f(u_K^n)-f(u_L^n)\big)^2\Big] -\frac{1}{2}\tilde{C},
\end{align*}
for some constant $\tilde{C}>0$, depending only on $T, c_{\eta}, u_0$. A similar argumentations (cf.~estimation of $\mathbf{A}$) reveal that
\begin{align*}
  \bar{\mathbf{B}}\ge \frac{1}{2\,c_f} \sum_{K\in \T} \sum_{n=0}^{N-1} \Delta t \sum_{\underset{\sigma=K|L} {\sigma\in \E_K}} |\sigma|
 (\vec{v}\cdot n_{K,\sigma})^- \mathbb{E}\Big[ \big(f(u_K^n)-f(u_L^n)\big)^2\Big].
\end{align*}
Since $\bar{\mathbf{A}}+\bar{\mathbf{B}}=0$, there exists positive constant $C =C(T, u_0, \xi, c_f,c_{\eta})>0$ such that
\begin{align}
 \sum_{K\in \T} \sum_{n=0}^{N-1} \Delta t \sum_{\underset{\sigma=K|L} {\sigma\in \E_K}} |\sigma|
 (\vec{v}\cdot n_{K,\sigma})^- \mathbb{E}\Big[ \big(f(u_K^n)-f(u_L^n)\big)^2\Big] \le C, \label{weak bv-1}
\end{align}
or equivalently \eqref{weak bv-0} holds.

Let $\T_R=\{ K\in \T: K\subset B(0,R)\}$. Following \cite{bauzet-fluxsplitting}, there exists 
$C_1=C_1(R, d, T, u_0, \xi, c_f, c_{\eta})>0$ such that
\begin{align}
  \sum_{K\in \T_R} \sum_{n=0}^{N-1} \Delta t \sum_{\underset{\sigma=K|L} {\sigma\in \E_K}} |\sigma|
 (\vec{v}\cdot n_{K,\sigma})^- \mathbb{E}\Big[ \big|f(u_K^n)-f(u_L^n)\big|\Big] \le C_1 h^{-\frac{1}{2}},\label{weak bv-2}
\end{align}
holds as well. Let $\E^R$ denotes the set of all interfaces of $\T_R$. Then \eqref{weak bv-2} is equivalent to \eqref{weak bv-02}. This completes the proof of the lemma.
\end{proof}

\section{On entropy inequality for approximate solution}\label{sec:entropy-inequality}
In this section, we establish entropy inequality for finite volume approximate solution. Since we are in stochastic set up, one needs to encounter the It\^{o} calculus, and therefore it is
natural to consider a time-continuous approximate solution constructed from $u_{\T,\Delta t}^h$.
\subsection{Time-continuous approximate solution}
 Since $\text{div}_x \vec{v}(t,x)=0$ for any $(t,x)\in \Pi_T$, the upwind finite volume scheme \eqref{scheme:upwind fin volume} can be rewritten 
 as: for any $K\in \T$, and $n\in \{0,1,\cdots, N-1\}$
 \begin{align*}
\begin{cases}
     \displaystyle u_{K}^{n+1}= u_K^n   +  \frac{\Delta t}{|K|}\sum_{ \sigma\in \E_K} |\sigma| (\vec{v}\cdot n_{K,\sigma})^- \big(f(u_\sigma^n)-f(u_K^n)\big)
    +  \int_{t_n}^{t_{n+1}} \int_{\mathbf{E}} \eta(u_K^n;z)\tilde{N}(dz,dt),\\
    \displaystyle  u_K^0=\frac{1}{|K|} \int_{K} u_0(x)\,dx.
   \end{cases}
\end{align*}
We define a time-continuous  discrete approximation, denoted by $v_K^n(\omega,s)$ on $\Omega\times [t_n, t_{n+1}]$, $n\in \{0,1,\cdots, N-1\}$ and $K\in \T$ 
from the discrete unknowns $u_K^n$ as
\begin{align}
  v_{K}^{n}(\omega,s) & =  u_K^n   +  \int_{t_n}^{s}\sum_{ \sigma\in \E_K} |\sigma|(\vec{v}\cdot n_{K,\sigma})^-\frac{f(u_\sigma^n)-f(u_K^n)}{|K|}
    +  \int_{t_n}^{s} \int_{\mathbf{E}} \eta(u_K^n;z)\tilde{N}(dz,dt).\label{scheme:time-continuous upwind fin volume}
\end{align}
Note that,
\begin{align*}
 \begin{cases}
  v_K^n(\omega, t_n)=u_K^n \\
  v_K^n(\omega, t_{n+1})=u_K^{n+1}.
 \end{cases}
\end{align*}
We drop $\omega$ and write $v_K^n(\cdot)$ instead of $v_K^n(\omega,\cdot)$. Define a time-continuous approximate solution 
$v_{\T,\Delta t}^h(s,x)$ on $[0,T]\times \R^d$ by
\begin{align}
 v_{\T,\Delta t}^h(t,x)= v_K^n (t),\, x\in K, \,\,t\in [0,T).\label{eq:time continuous upwind fin-approx}
\end{align}
Next, we estimate the $L^2$-error between $u_{\T,\Delta t}^h$ and $v_{\T,\Delta t}^h$. We have the following proposition.
\begin{prop} \label{prop:error estimate}
  Let the assumptions of Lemma \ref{lem:weak bv} hold and $u_{\T,\Delta t}^h$ be the finite volume approximate solution defined by \eqref{scheme:upwind fin volume} and
  \eqref{eq:upwind fin-approx}, and $v_{\T,\Delta t}^h$ be the corresponding time-continuous approximate 
solution prescribed by \eqref{scheme:time-continuous upwind fin volume}-\eqref{eq:time continuous upwind fin-approx}. Then there 
exit two constants $C, C_1 \in \R_{+}^{*}$, independent of $h$ and $\Delta t$ such that 
\begin{align*}
 \| v_{\T,\Delta t}^h-u_{\T,\Delta t}^h\|_{L^2(\Omega \times \Pi_T)}^2 \le Ch + C_1 \Delta t.
\end{align*}
\end{prop}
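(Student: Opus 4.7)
The plan is to work slab by slab, exploit the explicit splitting in \eqref{scheme:time-continuous upwind fin volume}, and treat the deterministic flux piece and the compensated Poisson piece independently. For $K\in \T$, $n\in \{0,\ldots,N-1\}$ and $t\in[t_n, t_{n+1}]$, I would introduce the shorthand
\begin{align*}
D_K^n(t) &:= \frac{t-t_n}{|K|}\sum_{\underset{\sigma=K|L}{\sigma\in\E_K}}|\sigma|(\vec{v}\cdot n_{K,\sigma})^-\bigl(f(u_\sigma^n)-f(u_K^n)\bigr), \\
S_K^n(t) &:= \int_{t_n}^t\int_{\mathbf{E}}\eta(u_K^n;z)\,\tilde{N}(dz,dr),
\end{align*}
so that $v_K^n(t)-u_K^n = D_K^n(t)+S_K^n(t)$, and then estimate $\|v_{\T,\Delta t}^h-u_{\T,\Delta t}^h\|_{L^2(\Omega\times\Pi_T)}^2$ via $(a+b)^2\le 2a^2+2b^2$ cell by cell and slab by slab.

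For the deterministic contribution I would apply Cauchy--Schwarz on the interface sum with weights $|\sigma|(\vec{v}\cdot n_{K,\sigma})^-$, bound the first factor by $V|K|/(\alpha^2 h)$ through \eqref{estim:normal}, and recognise the second factor as precisely the integrand of the weak-BV sum \eqref{weak bv-1}. Integrating the $(t-t_n)^2$ factor produces $(\Delta t)^3/3$, so after multiplying by $|K|$ and summing over $K$ and $n$ the total is at most $\frac{V(\Delta t)^2}{3\alpha^2 h}$ times the constant from part~(a) of Lemma~\ref{lem:weak bv}. The CFL condition \eqref{cond:CFL-1} then converts the unfavourable $(\Delta t)^2/h$ factor into a bound of order $\Delta t$ (equivalently of order $h$), yielding the $Ch$ piece of the claim.

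For the stochastic contribution I would invoke It\^{o} isometry for the compensated Poisson random measure, together with the bound $|\eta(u_K^n;z)|\le\lambda^*|u_K^n|h_1(z)$ from \ref{A3}, to get
\begin{align*}
\mathbb{E}\bigl[S_K^n(t)^2\bigr]=\mathbb{E}\Bigl[\int_{t_n}^{t}\int_{\mathbf{E}}\eta^2(u_K^n;z)\,m(dz)\,dr\Bigr]\le(t-t_n)(\lambda^*)^2 c_\eta\,\mathbb{E}\bigl[(u_K^n)^2\bigr].
\end{align*}
Time integration produces $(\Delta t)^2/2$, and the subsequent summation over cells reassembles $\|u_{\T,\Delta t}^h\|_{L^2(\Omega\times\Pi_T)}^2$, which Lemma~\ref{lem:moment estimate} controls by a constant depending only on $T$, $c_\eta$ and $\mathbb{E}[\|u_0\|_2^2]$. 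Hence this piece is bounded by $C_1\Delta t$, and adding the two contributions gives the stated estimate.

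The only delicate point is the algebraic setup in the deterministic estimate: one has to pick the Cauchy--Schwarz weights so that the squared flux sum reproduces \emph{verbatim} the integrand of the weak-BV bound \eqref{weak bv-1}, and then absorb the bad $1/h$ factor through the CFL \eqref{cond:CFL-1}. The $\mathcal{F}_{t_n}$-measurability of $u_K^n$ and $u_\sigma^n$ makes $D_K^n$ and $S_K^n$ orthogonal in $L^2(\Omega)$, which is conceptually clean but is not strictly needed thanks to the $(a+b)^2\le 2a^2+2b^2$ step.
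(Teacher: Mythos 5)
Your proposal is correct and follows essentially the same route as the paper: the paper also splits $v_K^n(t)-u_K^n$ into the deterministic flux increment and the compensated Poisson integral (using the $\mathcal{F}_{t_n}$-orthogonality to get an exact sum of squares), bounds the flux term via Cauchy--Schwarz with the weights $|\sigma|(\vec{v}\cdot n_{K,\sigma})^-$, \eqref{estim:normal}, the weak BV estimate and the CFL condition, and handles the noise term via It\^{o}--L\'{e}vy isometry, \ref{A3} and Lemma \ref{lem:moment estimate}. No gaps.
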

\begin{proof} In view of Lemmas \ref{lem:moment estimate} - \ref{lem:weak bv}, and the estimate \eqref{estim:normal} along with \eqref{cond:CFL-1}, we have
 \begin{align}
  & \|v_{\T,\Delta t}^h-u_{\T,\Delta t}^h\|_{L^2(\Omega \times \Pi_T)}^2  \notag \\
   & = \sum_{K\in \T} \sum_{n=0}^{N-1} \int_{t_n}^{t_{n+1}}\int_{K}  \mathbb{E}\Big[  \int_{t_n}^{s} \int_{\mathbf{E}} \eta^2(u_K^n;z)
   m(dz)\,dt\Big]\,dx\,ds  \notag \\
   & \quad +\sum_{K\in \T} \sum_{n=0}^{N-1} \int_{t_n}^{t_{n+1}}\int_{K} \mathbb{E}\Big[ \Big( \frac{s-\Delta t}{|K|} 
   \sum_{ \sigma\in \E_K} |\sigma|(\vec{v}\cdot n_{K,\sigma})^-\big(f(u_\sigma^n)-f(u_K^n)\big)
   \Big)^2\Big]\, dx\,ds\notag \\
    & \le \sum_{K\in \T} \sum_{n=0}^{N-1} \frac{(\Delta t)^3}{|K|}\frac{V|K|}{\alpha^2 h}
    \sum_{ \sigma\in \E_K} |\sigma|(\vec{v}\cdot n_{K,\sigma})^- \mathbb{E}\Big[\big( f(u_\sigma^n)-f(u_K^n)\big)^2\Big] 
   +  c_{\eta}\Delta t  \|u_{\T,\Delta t}^h\|_{L^2(\Omega \times \Pi_T)}^2 \notag \\
     & \le  \frac{(\Delta t)^2 V}{\alpha^2 h}\sum_{K\in \T} \sum_{n=0}^{N-1} \Delta t 
    \sum_{ \sigma\in \E_K} |\sigma| (\vec{v}\cdot n_{K,\sigma})^- \mathbb{E}\Big[\big( f(u_\sigma^n)-f(u_K^n)\big)^2\Big] 
   +  c_{\eta}\Delta t  \|u_{\T,\Delta t}^h\|_{L^2(\Omega \times \Pi_T)}^2 \notag \\
   & \le h\frac{(1-\xi)^2 \alpha^2}{c_f^2 V}C +  c_{\eta}\Delta t  \|u_{\T,\Delta t}^h\|_{L^2(\Omega \times \Pi_T)}^2 
    \le  Ch + C_1 \Delta t,\notag
 \end{align}
 where  $C,\, C_1 \in \R_{+}^{*}$ are two constants, independent of $h$ and $\Delta t$. This finishes the proof. 
\end{proof}
\subsection{Entropy inequalities for the approximate solution}
This subsection is devoted to derive the entropy inequalities for the finite volume approximate solution which will be used to prove the 
convergence of the numerical scheme and hence the existence of entropy solution of the underlying problem \eqref{eq:levy_con_laws}. To do 
so, we start with the following proposition related to the entropy inequalities for the discrete unknowns $u_K^n,\, K\in \T,\,\,
n\in \{0,1,2,\dots,N-1\}$. 
\begin{prop}[Discrete entropy inequalities]\label{prop:discrete entropy ineq}
Let the assumptions \ref{A1}-\ref{A4} hold, and $T>0$ be fixed. Consider an
  admissible mesh $\T$ on $\R^d$ with size $h$ in the sense of Definition \ref{def:admissible mesh}.
  Let $\Delta t=\frac{T}{N}$ be the time step for some $N\in \mathbb{N}^*$, satisfying 
  \begin{align*}
   \frac{\Delta t}{h} \goto 0 \,\,\text{as}\,\, h\goto 0.
  \end{align*}
  Then, $\mathbb{P}$-a.s. in $\Omega$, for any $\beta \in \mathcal{A}$ and for any nonnegative test function  $\psi\in C_c^\infty([0, T)\times\R^d)$, the following inequality holds:
  \begin{align*}
   - & \sum_{n=0}^{N-1} \sum_{K\in \T_R} \int_{K} \big( \beta(u_K^{n+1})-\beta(u_K^n)\big) \psi(t_n,x)\,dx \notag \\
   & + \sum_{n=0}^{N-1} \sum_{K\in \T_R} \int_{t_n}^{t_{n+1}} \int_{K} F^\beta(u_K^n)\vec{v}(t,x)\cdot \grad_x \psi(t_n,x)\,dx\,dt \notag \\
   & + \sum_{n=0}^{N-1} \sum_{K\in \T_R} \int_{t_n}^{t_{n+1}} \int_{\mathbf{E}} \int_{K}\int_0^1 \eta(u_K^n;z) \beta^\prime \big(u_K^n + \lambda 
   \eta(u_K^n;z)\big)\psi(t_n,x)d\lambda\,dx\,\tilde{N}(dz,dt) \notag \\
    & + \sum_{n=0}^{N-1} \sum_{K\in \T_R} \int_{t_n}^{t_{n+1}} \int_{\mathbf{E}} \int_{K}\int_0^1 (1-\lambda) \eta^2(u_K^n;z) 
    \beta^{\prime\prime} \big(u_K^n + \lambda \eta(u_K^n;z)\big)\psi(t_n,x)d\lambda\,dx\,m(dz)\,dt 
     \ge R^{h,\Delta t},
  \end{align*}
where $R^{h,\Delta t}$ satisfies the following condition: for any $\mathbb{P}$-measurable set $B$, $\mathbb{E}\Big[ {\bf 1}_B R^{h,\Delta t}\Big]\goto 0$ as 
$h\goto 0$.
\end{prop}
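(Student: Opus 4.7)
The plan is to derive the claimed inequality by applying an It\^{o}--L\'{e}vy formula to $\beta$ evaluated along the time-continuous interpolate $v_K^n(\cdot)$ of \eqref{scheme:time-continuous upwind fin volume}---recalling that $v_K^n(t_n)=u_K^n$ and $v_K^n(t_{n+1})=u_K^{n+1}$---and then combining the result with the classical monotone-scheme entropy manipulation. The drift produces the deterministic term
\[
\int_{t_n}^{t_{n+1}}\beta'\bigl(v_K^n(s)\bigr)\,\frac{1}{|K|}\sum_{\underset{\sigma=K|L}{\sigma\in\E_K}}|\sigma|(\vec v\cdot n_{K,\sigma})^-\bigl(f(u_L^n)-f(u_K^n)\bigr)\,ds,
\]
while the jump and compensator contributions, after a second-order Taylor expansion of $\beta$ in the form $\beta(u+\eta)-\beta(u)=\int_0^1\eta\,\beta'(u+\lambda\eta)\,d\lambda$ and $\beta(u+\eta)-\beta(u)-\eta\beta'(u)=\int_0^1(1-\lambda)\eta^2\beta''(u+\lambda\eta)\,d\lambda$, take exactly the integrand form appearing in the statement, but with $\beta'$ and $\beta''$ evaluated at $v_K^n(s^-)+\lambda\eta(u_K^n;z)$ rather than at $u_K^n+\lambda\eta(u_K^n;z)$.

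Next I multiply by $\psi(t_n,x)$, integrate over $K$, and sum over $n$ and $K\in\T_R$. For the drift I invoke the monotone-scheme inequality
\[
\beta'(a)\bigl(f(b)-f(a)\bigr)\le F^\beta(b)-F^\beta(a),\qquad a,b\in\R,
\]
which holds for convex $\beta$ and nondecreasing $f$ (legitimate by the remark preceding \eqref{scheme:upwind fin volume}), applied with $a=u_K^n$ and $b=u_L^n$; the replacement of $\beta'(v_K^n(s))$ by $\beta'(u_K^n)$ costs an error controlled by $\|\beta''\|_\infty\,|v_K^n(s)-u_K^n|$. This produces a telescoping boundary sum whose nodes are the entropy-flux differences $F^\beta(u_L^n)-F^\beta(u_K^n)$ across interfaces. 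A discrete Green-type identity based on $\sum_{\sigma\in\E_K}|\sigma|\vec v\cdot n_{K,\sigma}=0$ (equation \eqref{sum:zero-flux}) then lets me rewrite this sum, up to a consistency error involving the smoothness of $\psi$ and of $\vec v$, as $\sum_n\sum_K\int_{t_n}^{t_{n+1}}\int_K F^\beta(u_K^n)\vec v(t,x)\cdot\grad_x\psi(t_n,x)\,dx\,dt$.

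For the noise part, once the arguments $v_K^n(s^-)$ of $\beta'$ and $\beta''$ are relabelled to $u_K^n$ I recover exactly the two remaining terms of the claimed inequality. The cost of this substitution is handled by Proposition \ref{prop:error estimate} together with the It\^{o} isometry for the compensated integral, and by boundedness of $\beta''$ combined with the growth bound on $\eta$ from \ref{A3} for the compensator integral. All the error terms---convection consistency, $\beta'$ substitution in the drift, and noise substitution---are collected into $R^{h,\Delta t}$, and the restriction $K\in\T_R$ is harmless once $h$ is smaller than the distance from $\supp\psi$ to $\partial B(0,R)$.

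The main obstacle is the quantitative verification that $\mathbb{E}[\mathbf{1}_B R^{h,\Delta t}]\to 0$. The most delicate contributions are the convection consistency error and the drift substitution $\beta'(v_K^n(s))\to\beta'(u_K^n)$: Cauchy--Schwarz combined with the weak BV bound \eqref{weak bv-0} produces terms of schematic order $\Delta t\cdot h^{-1/2}$ on $B(0,R)$, which is precisely why the strengthened hypothesis $\Delta t/h\to 0$ is required instead of the plain CFL \eqref{cond:CFL-1}. The remaining pieces (smoothness of $\psi$, cell averaging of $\vec v$, and $L^2$-closeness of $v_{\T,\Delta t}^h$ to $u_{\T,\Delta t}^h$ via Proposition \ref{prop:error estimate}) give $O(h)+O(\sqrt{\Delta t})$ contributions, all of which vanish under the same hypothesis.
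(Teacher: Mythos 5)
Your proposal is correct and follows essentially the same route as the paper: It\^{o}--L\'{e}vy formula applied to $\beta(v_K^n)$ along the time-continuous interpolate, the entropy-flux comparison $\beta'(u_K^n)(f(u_\sigma^n)-f(u_K^n))\le F^\beta(u_\sigma^n)-F^\beta(u_K^n)$ for the convection term, a discrete Green identity based on \eqref{sum:zero-flux}, and substitution of $v_K^n(t)$ by $u_K^n$ in the noise terms controlled via Proposition \ref{prop:error estimate}, the It\^{o}--L\'{e}vy isometry, the boundedness of $\eta$, and the weak BV estimate. The only cosmetic discrepancy is in the schematic order of the delicate error terms (the paper's bound for the drift-substitution term is $O(\Delta t/h)$ rather than $O(\Delta t\,h^{-1/2})$), but both vanish under the hypothesis $\Delta t/h\to 0$, so the conclusion is unaffected.
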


\begin{proof}
 Let $T>0$ be fixed and $\T$ be an admissible mesh  on $\R^d$ with size $h$ in the sense of Definition \ref{def:admissible mesh}.
Let $\Delta t=\frac{T}{N}$ be the time step for some $N\in \mathbb{N}^*$ and $t_n=n\Delta t, n\in \{0,1,\cdots, N\}$.
 Let $\beta\in \mathcal{A}$. Applying It\^{o}-L\'{e}vy formula to $\beta(v_K^n)$, where $v_K^n$ is prescribed by the equation 
 \eqref{scheme:time-continuous upwind fin volume}, we have
 \begin{align}
  \beta(v_K^n(t_{n+1})) & =\beta(v_K^n(t_n)) + \int_{t_n}^{t_{n+1}} \beta^\prime(v_K^n(t)) 
  \sum_{ \sigma\in \E_K} |\sigma|(\vec{v}\cdot n_{K,\sigma})^- \frac{f(u_\sigma^n)-f(u_K^n)}{|K|} \,dt \notag \\
  & +   \int_{t_n}^{t_{n+1}} \int_{\mathbf{E}} \int_0^1 \eta(u_K^n;z) \beta^\prime \big(v_K^n(t) + \lambda 
   \eta(u_K^n;z)\big)d\lambda\,\tilde{N}(dz,dt) \notag \\
   & +   \int_{t_n}^{t_{n+1}} \int_{\mathbf{E}} \int_0^1  (1-\lambda)\eta^2(u_K^n;z) \beta^{\prime\prime} \big(v_K^n(t) + \lambda 
   \eta(u_K^n;z)\big)d\lambda\,m(dz)\,dt.\label{ineq: discrete entropy-1}
 \end{align}
 Let  $\psi\in C_c^\infty([0, T)\times\R^d)$ be a nonnegative test function. Then there exists $R>h$ such that 
   $\text{supp}\psi \subset [0,T) \times B(0,R-h)$. Also define $\T_R =\{ K\in \T : K\subset B(0,R)\}$.
  We multiply the equation \eqref{ineq: discrete entropy-1}
 by $|K| \psi_K^n$ where $\displaystyle \psi_K^n= \frac{1}{|K|} \int_{K}\psi(t_n,x)\,dx$ and then we sum over all $K\in \T_R$ and 
 $n\in \{0,1,\cdots, N-1\}$. The resulting expression reads to
 \begin{align}
  & \sum_{n=0}^{N-1} \sum_{K\in \T_R} \big[ \beta(u_K^{n+1})-\beta(u_K^n)\big] |K|\psi_K^n \notag \\
  & = \sum_{n=0}^{N-1} \sum_{K\in \T_R} \int_{t_n}^{t_{n+1}} \beta^\prime(v_K^n(t)) 
  \sum_{ \sigma\in \E_K} |\sigma|(\vec{v}\cdot n_{K,\sigma})^- \big(f(u_\sigma^n)-f(u_K^n)\big) \psi_K^n \,dt \notag \\
   & +  \sum_{n=0}^{N-1} \sum_{K\in \T_R}  \int_{t_n}^{t_{n+1}} \int_{\mathbf{E}} \int_0^1 \eta(u_K^n;z) \beta^\prime \big(v_K^n(t) + \lambda 
   \eta(u_K^n;z)\big) |K| \psi_K^n d\lambda\,\tilde{N}(dz,dt) \notag \\
   & +  \sum_{n=0}^{N-1} \sum_{K\in \T_R}  \int_{t_n}^{t_{n+1}} \int_{\mathbf{E}} \int_0^1  (1-\lambda)\eta^2(u_K^n;z)
   \beta^{\prime\prime} \big(v_K^n(t) + \lambda 
   \eta(u_K^n;z)\big)|K|\psi_K^n  d\lambda\,m(dz)\,dt\notag \\
   & \text{i.e.}\,\,\,\, A^{h,\Delta t}= B^{h,\Delta t} + M^{h,\Delta t} + D^{h,\Delta t}. \label{establish:initial-sum}
 \end{align}
 Following \cite{bauzet-fluxsplitting}, we express $B^{h,\Delta t}$ as follows.
 \begin{align*}
  B^{h,\Delta t}= B^{h,\Delta t}- B_1^{h,\Delta t} + B_1^{h,\Delta t} -B_2^{h,\Delta t} + B_2^{h,\Delta t},
 \end{align*}
 where 
 \begin{align*}
  B_1^{h,\Delta t}&= \sum_{n=0}^{N-1} \sum_{K\in \T_R} \int_{t_n}^{t_{n+1}} \beta^\prime(u_K^n) 
  \sum_{ \sigma\in \E_K} |\sigma| (\vec{v}\cdot n_{K,\sigma})^-  \big(f(u_\sigma^n)-f(u_K^n)\big) \psi_K^n \,dt, \notag \\
   B_2^{h,\Delta t}&= \sum_{n=0}^{N-1} \sum_{K\in \T_R} \int_{t_n}^{t_{n+1}} 
  \sum_{ \sigma\in \E_K} |\sigma| (\vec{v}\cdot n_{K,\sigma})^- \big(F^\beta(u_\sigma^n)-F^\beta(u_K^n)\big) \psi_K^n \,dt.
 \end{align*}
 Observe that
 \begin{align*}
   & B_1^{h,\Delta t} -B_2^{h,\Delta t} = \sum_{n=0}^{N-1} \sum_{K\in \T_R} \Delta t \sum_{ \sigma\in \E_K} |\sigma| (\vec{v}\cdot n_{K,\sigma})^-
   \Big\{  \beta^\prime(u_K^n)  \big(f(u_\sigma^n)-f(u_K^n)\big)- \big(F^\beta(u_\sigma^n)-F^\beta(u_K^n)\big)\Big\} \psi_K^n.
 \end{align*}
 Thanks to nondecreasingness of the functions $f$ and $\beta^\prime$, one has 
 \begin{align*}
   \beta^\prime(u_K^n)  \big(f(u_\sigma^n)-f(u_K^n)\big)-  \big(F^\beta(u_\sigma^n)-F^\beta(u_K^n)\big)
   & = \int_{u_K^n}^{u_\sigma^n} \big(\beta^\prime(u_K^n)- \beta^{\prime}(s)\big)f^\prime(s)ds \le 0,
 \end{align*}
 and hence $ B_1^{h,\Delta t} -B_2^{h,\Delta t} \le 0$.
 
 By the assumption \ref{A2}, we have
 $\displaystyle \sum_{\sigma\in \E_K} |\sigma|\vec{v}\cdot n_{K,\sigma} F^\beta(u_K^n)\psi_K^n=0$, and therefore
 \begin{align*}
  B_2^{h,\Delta t}= - \sum_{n=0}^{N-1} \sum_{K\in \T_R} \Delta t \sum_{ \sigma\in \E_K} |\sigma|(\vec{v}\cdot n_{K,\sigma})
  F^\beta(u_\sigma^n) \psi_K^n.
 \end{align*}
 Let $x_\sigma$ be the center of the edge $\sigma$ and $\psi_\sigma^n$ be the value of $\psi( t_n,x_\sigma)$. Then, 
 \begin{align*}
  \sum_{n=0}^{N-1} \sum_{K\in \T_R} \Delta t \sum_{ \sigma\in \E_K} |\sigma| (\vec{v}\cdot n_{K,\sigma})
  F^\beta(u_\sigma^n) \psi_\sigma^n=0.
 \end{align*}
 A similar argument (as described in Bauzet et al.\cite[Proposition\,$4$]{bauzet-fluxsplitting}) reveals that
 \begin{align*}
   B_2^{h,\Delta t} & = \sum_{n=0}^{N-1} \sum_{K\in \T_R}\int_{t_n}^{t_n+1} \int_{ K} F^\beta(u_K^n) \vec{v}(t,x)\cdot \grad_x\psi(t_n,x) \,dx\,dt
  +  R_1^{h,\Delta t} + R_2^{h,\Delta t},
 \end{align*}
 where 
 \begin{align*}
  R_1^{h,\Delta t}&= \sum_{n=0}^{N-1} \sum_{K\in \T_R} \Delta t \sum_{ \sigma\in \E_K} |\sigma| (\vec{v}\cdot n_{K,\sigma}) 
 \big[ F^\beta(u_K^n)-F^\beta(u_\sigma^n)\big] \big(\psi_K^n- \psi_\sigma^n\big ),\\
 R_2^{h,\Delta t}&=  \sum_{n=0}^{N-1} \sum_{K\in \T_R} \Delta t \sum_{ \sigma\in \E_K} \Big\{ |\sigma| (\vec{v}\cdot n_{K,\sigma})\psi_\sigma^n - 
  \int_{\sigma} (\vec{v}\cdot n_{K,\sigma})\psi(t_n,x) d\nu(x) \Big\} F^\beta(u_K^n).
 \end{align*}
 Combining all these, we obtain that 
 \begin{align}
  B^{h,\, \Delta t} &\le  \sum_{n=0}^{N-1} \sum_{K\in \T_R}\int_{t_n}^{t_n+1} \int_{ K} F^\beta(u_K^n)
  \vec{v}(t,x)\cdot \grad_x\psi(t_n,x) \,dx\,dt \notag \\
  & \hspace{3cm} + B^{h,\Delta t}- B_1^{h,\Delta t}  +  R_1^{h,\Delta t} + R_2^{h,\Delta t}. \label{establish: B(h,delta)}
 \end{align}
 Next we consider the term $M^{h,\Delta t}$. It can be decompose as follows:
 \begin{align}
  M^{h,\Delta t}= M^{h,\Delta t}- M_1^{h,\Delta t} + M_1^{h,\Delta t}, \label{establish: M(h,delta)}
 \end{align}
 where 
 \begin{align*}
  M_1^{h,\Delta t}= \sum_{n=0}^{N-1} \sum_{K\in \T_R}  \int_{t_n}^{t_{n+1}} \int_{\mathbf{E}} \int_{K}
  \int_0^1 \eta(u_K^n;z) \beta^\prime \big(u_K^n + \lambda 
   \eta(u_K^n;z)\big) \psi(t_n,x)\, d\lambda\,dx\,\tilde{N}(dz,dt).
 \end{align*}
 Similarly, we rewrite $D^{h,\Delta t}$ as 
 \begin{align}
  D^{h,\Delta t}= D^{h,\Delta t}- D_1^{h,\Delta t} + D_1^{h,\Delta t}, \label{establish: D(h,delta)}
 \end{align}
 where 
 \begin{align*}
  &D_1^{h,\Delta t} \\
  &= \sum_{n=0}^{N-1} \sum_{K\in \T_R} \int_{t_n}^{t_{n+1}} \int_{\mathbf{E}} \int_{K}\int_0^1 (1-\lambda) \eta^2(u_K^n;z) 
    \beta^{\prime\prime} \big(u_K^n + \lambda \eta(u_K^n;z)\big)\psi(t_n,x)\,d\lambda\,dx\,m(dz)\,dt.
 \end{align*}
 In view of \eqref{establish: B(h,delta)}, \eqref{establish: M(h,delta)}, \eqref{establish: D(h,delta)}, and \eqref{establish:initial-sum}
 we have
 \begin{align*}
   - & \sum_{n=0}^{N-1} \sum_{K\in \T_R} \int_{K} \big( \beta(u_K^{n+1})-\beta(u_K^n)\big) \psi(t_n,x)\,dx \notag \\
   & + \sum_{n=0}^{N-1} \sum_{K\in \T_R} \int_{t_n}^{t_{n+1}} \int_{K} F^\beta(u_K^n)\vec{v}(t,x)\cdot \grad_x \psi(t_n,x)\,dx\,dt \notag \\
   & + \sum_{n=0}^{N-1} \sum_{K\in \T_R} \int_{t_n}^{t_{n+1}} \int_{\mathbf{E}} \int_{K}\int_0^1 \eta(u_K^n;z) \beta^\prime \big(u_K^n + \lambda 
   \eta(u_K^n;z)\big)\psi(t_n,x)\,d\lambda\,dx\,\tilde{N}(dz,dt) \notag \\
    & + \sum_{n=0}^{N-1} \sum_{K\in \T_R} \int_{t_n}^{t_{n+1}} \int_{\mathbf{E}} \int_{K}\int_0^1 (1-\lambda) \eta^2(u_K^n;z) 
    \beta^{\prime\prime} \big(u_K^n + \lambda \eta(u_K^n;z)\big)\psi(t_n,x)d\lambda\,dx\,m(dz)\,dt \notag \\
    & \ge  \big(B_1^{h,\Delta t}- B^{h,\Delta t}\big) -R_1^{h,\Delta t}- R_2^{h,\Delta t} + \big( M_1^{h,\Delta t}-M^{h,\Delta t}\big)
    + \big(D_1^{h,\Delta t}-D^{h,\Delta t}\big) \equiv R^{h,\Delta t}.
 \end{align*}
 To complete the proof of the proposition, it is only required to show: for any $\mathbb{P}$-measurable set $B$, $\mathbb{E}\Big[ {\bf 1}_B R^{h,\Delta t}\Big]\goto 0$ as  $h\goto 0$. Now we assume that $ \displaystyle \frac{\Delta t}{h} \goto 0$ as 
 $h\goto 0$. In this manner, with out loss of generality, we may assume that the CFL condition 
 \begin{align*}
  \Delta t \le \frac{(1-\xi)h}{c_f V}\alpha^2 
 \end{align*}
 holds for some $\xi \in (0,1)$ and hence the estimates given in Lemmas \ref{lem:moment estimate} and \ref{lem:weak bv} hold as well.
To proceed further, we will separately show the convergence of $\mathbb{E}\Big[ {\bf 1}_B\big( B_1^{h,\Delta t}- B^{h,\Delta t}\big)\Big]$, 
$\mathbb{E}\Big[ {\bf 1}_B\big( M_1^{h,\Delta t}- M^{h,\Delta t}\big)\Big]$, $\mathbb{E}\Big[ {\bf 1}_B\big( D_1^{h,\Delta t}- D^{h,\Delta t}\big)\Big]$,
$\mathbb{E}\big[ {\bf 1}_B\, R_1^{h,\Delta t}\big]$, and $\mathbb{E}\big[ {\bf 1}_B\, R_2^{h,\Delta t}\big]$.
\vspace{.2cm}

\noindent{1. \underline{\bf Study of $\mathbb{E}\big[ {\bf 1}_B\big( B_1^{h,\Delta t}- B^{h,\Delta t}\big)\big]$:}}
Let $B$ be any $\mathbb{P}$-measurable set. Then, by using \eqref{scheme:time-continuous upwind fin volume} we get
\begin{align*}
 &\Big| \mathbb{E}\big[ {\bf 1}_B\big( B_1^{h,\Delta t}- B^{h,\Delta t}\big)\big]\Big| \\
 & = \Bigg| \mathbb{E}\Big[\sum_{n=0}^{N-1} \sum_{ K \in \T_R} 
 \int_{t_n}^{t_{n+1}} {\bf 1}_B  \beta^{\prime\prime}(\xi_K^n)\big(v_K^n(s))-u_K^n)\big) \sum_{\sigma\in \E_K} 
 |\sigma| (\vec{v}\cdot n_{K,\sigma})^-  \big(f(u_\sigma^n)-f(u_K^n)\big) \psi_K^n \,ds\Big]\Bigg|\\
 & \le \Bigg| \mathbb{E}\Big[\sum_{n=0}^{N-1} \sum_{ K \in \T_R} {\bf 1}_B 
 \int_{t_n}^{t_{n+1}} \beta^{\prime\prime}(\xi_K^n) \frac{s-t_n}{|K|} \Big( \sum_{\sigma\in \E_K} 
 |\sigma| (\vec{v}\cdot n_{K,\sigma})^-  \big(f(u_\sigma^n)-f(u_K^n)\big)\Big)^2 \psi_K^n \,ds\Big]\Bigg|\\
 & + \Bigg| \mathbb{E}\Big[\sum_{n=0}^{N-1} \sum_{ K \in \T_R} {\bf 1}_B 
 \int_{t_n}^{t_{n+1}} \beta^{\prime\prime}(\xi_K^n) \int_{t_n}^{s}\int_{\mathbf{E}} \eta(u_K^n;z)\tilde{N}(dz,dr) \sum_{\sigma\in \E_K} 
 |\sigma| (\vec{v}\cdot n_{K,\sigma})^-  \big(f(u_\sigma^n)-f(u_K^n)\big) \psi_K^n \,ds\Big]\Bigg| \\
 & \equiv T_1^{h,\Delta t} + T_2^{h,\Delta t}.
\end{align*}
Following computations as in \cite[estimation for $\tilde{T}_1^{h,k}$]{bauzet-fluxsplitting}, it can be shown that 
\begin{align*}
& T_1^{h,\Delta t} \le \frac{\Delta t}{h} ||\beta^{\prime\prime}||_{L^\infty} ||\psi||_{L^\infty} \frac{V}{\alpha^2} C.
\end{align*}
  Next, we move on to estimate $T_2^{h,\Delta t}$. Note that 
\begin{align*}
 \big|T_2^{h,\Delta t}\big|^2
 & \le \mathbb{E}\Big[ \sum_{n=0}^{N-1} \sum_{ K \in \T_R} \int_{t_n}^{t_{n+1}} \Big( {\bf 1}_B \beta^{\prime\prime}(\xi_K^n)
 \psi_K^n \sum_{\sigma\in \E_K}  |\sigma| (\vec{v}\cdot n_{K,\sigma})^- \big(f(u_\sigma^n)-f(u_K^n)\big) \Big)^2\,ds \Big] \\
 & \hspace{2cm} \times  \mathbb{E}\Big[ \sum_{n=0}^{N-1} \sum_{ K \in \T_R} \int_{t_n}^{t_{n+1}} \Big( 
 \int_{t_n}^{s}\int_{\mathbf{E}} \eta(u_K^n;z)\tilde{N}(dz,dr) \Big)^2\,ds\Big]\\
 &\le ||\beta^{\prime\prime}||_{L^\infty}^2 ||\psi||_{L^\infty}^2 \sum_{n=0}^{N-1} \sum_{ K \in \T_R} \Delta t 
  \Big(\sum_{\sigma\in \E_K} |\sigma| (\vec{v}\cdot n_{K,\sigma})^-\Big) 
  \sum_{\sigma\in \E_K} |\sigma| (v\cdot n_{K,\sigma})^- \mathbb{E}\Big[\big(f(u_\sigma^n)-f(u_K^n)\big)^2\Big]\\
  & \hspace{4cm} \times  \mathbb{E}\Big[ \sum_{n=0}^{N-1} \sum_{ K \in \T_R} \int_{t_n}^{t_{n+1}}
 \int_{t_n}^{s}\int_{\mathbf{E}} \eta^2(u_K^n;z)\,m(dz)\,dr\,ds\Big]\\
 &\le \frac{V}{\alpha^2 h} ||\beta^{\prime\prime}||_{L^\infty}^2 ||\psi||_{L^\infty}^2 \sum_{n=0}^{N-1} \sum_{ K \in \T_R} \Delta t 
  \sum_{\sigma\in \E_K} |\sigma| (\vec{v}\cdot n_{K,\sigma})^- \mathbb{E}\Big[\big(f(u_\sigma^n)-f(u_K^n)\big)^2\Big]\\
  & \hspace{4cm} \times   c_{\eta}\Delta t \sum_{n=0}^{N-1} \sum_{ K \in \T_R} |K| \Delta t
 \mathbb{E}\big[(u_K^n)^2\big]\\
 & \le \frac{\Delta t}{h} \frac{c_{\eta}V }{\alpha^2} ||\beta^{\prime\prime}||_{L^\infty}^2 ||\psi||_{L^\infty}^2
 \|u_{\T,\Delta}\|_{L^2(\Omega\times \Pi_T)}^2 C.
\end{align*}
In the above, the first inequality follows from Cauchy-Schwartz inequality, second inequality follows from Cauchy-Schwartz inequality and It\^{o}-L\'{e}vy isometry. In view of \eqref{estim:normal} and the assumption \ref{A3} on $\eta$, the third inequality holds 
true. In the last inequality, we have used the constant $C$ given by Lemma \ref{lem:weak bv}. Here we note that the assumption 
$ \displaystyle\frac{\Delta t}{h} \goto 0$ as $h\goto 0$ is crucial. Passing to the limit as $h\goto 0$, we conclude that 
$\mathbb{E}\Big[ {\bf 1}_B\big( B_1^{h,\Delta t}- B^{h,\Delta t}\big)\Big]\goto 0$. 
\vspace{.1cm}

\noindent{2. \underline{\bf Study of $\mathbb{E}\big[ {\bf 1}_B\big( M_1^{h,\Delta t}- M^{h,\Delta t}\big)\big]$:}} In view of triangle inequality, one has
\begin{align*}
 \Big|\mathbb{E}\Big[ {\bf 1}_B\big( M_1^{h,\Delta t}- M^{h,\Delta t}\big)\Big]\Big| \le \mathcal{M}_1^{h,\Delta t} + \mathcal{M}_2^{h,\Delta t},
 \end{align*}
 where 
 \begin{align*}
 &\mathcal{M}_1^{h,\Delta t}:=  \Bigg| \mathbb{E}\Big[{\bf 1}_B \sum_{n=0}^{N-1} \sum_{K\in \T_R}  \int_{t_n}^{t_{n+1}} \int_{\mathbf{E}} \int_{K}
  \int_0^1 \eta(u_K^n;z)\Big\{ \beta^\prime \big(u_K^n + \lambda 
   \eta(u_K^n;z)\big)-  \beta^\prime \big(v_K^n(t) + \lambda \eta(u_K^n;z)\big)\Big\} \\
   & \hspace{6cm} \times \big(\psi(t_n,x)-\psi(t,x)\big) d\lambda\,dx\,\tilde{N}(dz,dt)\Big]\Bigg|\\
   & \mathcal{M}_2^{h,\Delta t}:= \Bigg| \mathbb{E}\Big[{\bf 1}_B \sum_{n=0}^{N-1} \sum_{K\in \T_R}  \int_{t_n}^{t_{n+1}} \int_{\mathbf{E}} \int_{K}
  \int_0^1 \eta(u_K^n;z)\Big\{ \beta^\prime \big(u_K^n + \lambda 
   \eta(u_K^n;z)\big)- \beta^\prime \big(v_K^n(t) + \lambda \eta(u_K^n;z)\big)\Big\} \\
   & \hspace{6cm} \times \psi(t,x) d\lambda\,dx\,\tilde{N}(dz,dt)\Big]\Bigg|.
 \end{align*}
 Let us turn our focus on the term $\mathcal{M}_1^{h,\Delta t}$. Note that  $ \text{supp}\, \psi \subset B(0,R-h)\times [0,T)$ for some 
 $R>h$. Using Cauchy-Schwartz inequality along with the assumptions \ref{A1}-\ref{A4}, and It\^{o}-L\'{e}vy isometry, we get 
 \begin{align*}
   \Big|\mathcal{M}_1^{h,\Delta t}\Big|^2 
  & \le \Bigg(\sum_{n=0}^{N-1} \Bigg\{\sum_{K\in \T_R} \int_{K} \mathbb{E}\Big[\Big( \int_{t_n}^{t_{n+1}} \int_{\mathbf{E}} 
  \int_0^1 \big( \beta^\prime \big(u_K^n + \lambda \eta(u_K^n;z)\big)-\beta^\prime \big(v_K^n(t) + \lambda \eta(u_K^n;z)
  \big)\big) \\
   & \hspace{3cm} \times \eta(u_K^n;z)  \big(\psi(t_n,x)-\psi(t,x)\big) d\lambda \tilde{N}(dz,dt)
   \Big)^2\Big] \,dx \Bigg\}^\frac{1}{2}\Bigg)^2 |B(0,R)|  \\
   & \le 2 ||\beta^\prime||_{L^\infty}^2|B(0,R)| \Bigg(\sum_{n=0}^{N-1} \Big \{\sum_{K\in \T_R} \int_{K} \mathbb{E}\Big[\int_{t_n}^{t_{n+1}} 
   \int_{\mathbf{E}}  \eta^2(u_K^n;z)  \big(\psi(t_n,x)-\psi(t,x)\big)^2\,m(dz)\,dt\Big]\,dx\Big\}^\frac{1}{2}\Bigg)^2\\
   & \le 2 \Delta t ||\beta^\prime||_{L^\infty}^2 |B(0,R)|\,||\partial_t \psi||_{L^\infty}^2 c_{\eta}
  \Big( \sum_{n=0}^{N-1} \Delta t  \Big\{ \sum_{K\in \T_R}   |K| \mathbb{E}\big[(u_K^n)^2\big]\Big\}^\frac{1}{2}\Big)^2 \\
   & \le (\Delta t) C\big(\beta^\prime, \partial_t \psi, c_{\eta}, R \big) T \|u_{\T,\Delta t}^h\|_{L^\infty(0,T; L^2(\Omega\times \R^d))}^2\\
   & \le h C\big(\xi,\alpha, c_f, V,\beta^\prime, \partial_t \psi, c_{\eta}, R\big) T \|u_{\T,\Delta t}^h\|_{L^\infty(0,T; L^2(\Omega\times \R^d))}^2
   \, (\text{by}\,\eqref{cond:CFL-1}).
 \end{align*}
Thus, we see that 
\begin{align*}
 \mathcal{M}_1^{h,\Delta t} \longrightarrow 0 \,\,\,\text{as}\,\,h\goto 0.
\end{align*}
Now, we estimate $\mathcal{M}_2^{h,\Delta t}$. Here we note that the boundedness of $\eta$ i.e. $|\eta(u,z)|\le C h_1(z)$ for 
any $u\in \R$ and $z\in \mathbf{E}$ is  crucial. In view of the Cauchy-Schwartz inequality and It\^{o}-L\'{e}vy isometry, we obtain
\begin{align*}
\Big|\mathcal{M}_2^{h,\Delta t}\Big|^2
   & \le  \sum_{n=0}^{N-1} \sum_{K\in \T_R} \int_{K} \mathbb{E}\Big[\int_{t_n}^{t_{n+1}} \int_{\mathbf{E}} 
  \int_0^1 \Big\{ \beta^\prime \big(u_K^n + \lambda \eta(u_K^n;z)\big)-\beta^\prime \big(v_K^n(t) + \lambda \eta(u_K^n;z)
  \big)\Big\}^2 \\
   & \hspace{3cm} \times \eta^2(u_K^n;z)\psi^2(t,x)\, d\lambda \,m(dz)\,dt\Big]\,dx |B(0,R)| \\
    & \le C(R)||\beta^{\prime\prime}||_{\infty}^2 ||\psi||_\infty^2 \sum_{n=0}^{N-1} \sum_{K\in \T_R} \int_{K} \mathbb{E}\Big[\int_{t_n}^{t_{n+1}} \int_{\mathbf{E}} 
   |u_K^n -v_K^n(t)|^2 \eta^2(u_K^n;z)\,m(dz)\,dt\Big]\,dx  \\   
   & (\text{by the boundedness of}\,\eta)\\
   & \le C\big(R,\beta^{\prime\prime}, \psi,\big) \sum_{n=0}^{N-1} \sum_{K\in \T_R} \int_{K} \int_{t_n}^{t_{n+1}}
   \mathbb{E}\big[ (u_K^n-v_K^n(t))^2\big]\,dt\,dx \Big( \int_{\mathbf{E}} h_1^2(z) m(dz)\Big) \\
   & = C\big(R,\beta^{\prime\prime}, \psi,\big) c_{\eta} \|u_{\T,\Delta t}^h- v_{\T,\Delta t}^h\|_{L^2(\Omega \times \Pi_T)}^2 \longrightarrow 0\,\text{as}\,\,
   h\goto 0,
\end{align*}
where in the last line, we have invoked Proposition \ref{prop:error estimate} and the CFL condition \eqref{cond:CFL-1}. Hence 
 \begin{align*}
 \mathbb{E}\Big[ {\bf 1}_B\big( M_1^{h,\Delta t}- M^{h,\Delta t}\big)\Big] \goto 0 \quad \text{as} \quad h \goto 0.
 \end{align*}
\vspace{.1cm}

\noindent{3. \underline{\bf Study of $\mathbb{E}\big[ {\bf 1}_B\big( D_1^{h,\Delta t}- D^{h,\Delta t}\big)\big]$:}} Observe that
\begin{align*}
 &\Big|\mathbb{E}\big[ {\bf 1}_B\big( D_1^{h,\Delta t}- D^{h,\Delta t}\big)\big]\Big|\\
  & \le ||\beta^{\prime \prime\prime}||_{\infty} \,||\psi||_{\infty} 
  \mathbb{E}\Big[ \sum_{n=0}^{N-1} \sum_{K\in \T_R} \int_{t_n}^{t_{n+1}} \int_{\mathbf{E}}\int_{B(0,R)} \eta^2(u_K^n;z) \big|u_K^n -v_K^n(t)\big| dx\,m(dz)\,dt \Big]\\
  & \le C(\beta^{\prime\prime\prime}, \psi) \mathbb{E}\Big[ \sum_{n=0}^{N-1} \sum_{K\in \T_R} \int_{t_n}^{t_{n+1}} \int_{B(0,R)} 
  \big|u_K^n -v_K^n(t)\big| dx\,dt\Big] \Big( \int_{\mathbf{E}} h_1^2(z)\,m(dz)\Big) \\
  & =  C(\beta^{\prime\prime}, \psi, c_{\eta}) \| u_{\T,\Delta t}^h-v_{\T,\Delta t}^h\|_{L^1\big(\Omega \times B(0,R)\times [0,T)\big)}
   \longrightarrow 0 \,\,\text{as}\,\, h \goto 0.
\end{align*}
In the above, the second inequality follows from the boundedness condition on $\eta$, and the last line holds because of Proposition 
\ref{prop:error estimate} and the CFL condition \eqref{cond:CFL-1}.
\vspace{.1cm}

\noindent{4. \underline{\bf Study of $\mathbb{E}\big[ {\bf 1}_B\,R_1^{h,\Delta t}\big]$ and $\mathbb{E}\big[ {\bf 1}_B\,R_2^{h,\Delta t}\big]$: }}
Following computations as in Bauzet et al. \cite[Proposition\,$4$]{bauzet-fluxsplitting}
we infer that 
\begin{align*}
\Big| \mathbb{E}\big[ {\bf 1}_B\,R_1^{h,\Delta t}\big]\Big|
  & \le C ||\psi_x||_{\infty} ||\beta^\prime||_{\infty} \sqrt{h}; \,\,
 \Big|\mathbb{E}\big[ {\bf 1}_B\,R_2^{h,\Delta t}\big]\Big|  \le ||\beta^\prime||_{\infty} \frac{V c_f}{\alpha^2} \bar{\eps}(h)
   \|u_{\T,\Delta t}^h\|_{L^1\big( \Omega \times B(0,R)\times [0,T)\big)},
\end{align*}
where $\bar{\eps}(r) \goto 0$ as $r\goto 0$. 

We now combine all the above estimates to conclude: for any $\mathbb{P}$-measurable set $B$, 
$$\mathbb{E}\Big[ {\bf 1}_B R^{h,\Delta t}\Big]\goto 0\,\,\text{as}\,\, h\goto 0.$$ This completes the proof of the proposition. 
\end{proof}
To prove convergence of the proposed scheme and hence existence of entropy solution for \eqref{eq:levy_con_laws}, 
one also needs a continuous entropy inequality on the discrete solutions. Regarding this, we have the following proposition which essentially 
gives the entropy inequality for the finite volume approximate solution $u_{\T,\Delta t}^h$.

\begin{prop}[Entropy inequality for approximate solution]\label{prop:entropy inequality on appro solution}
Let the assumptions \ref{A1}-\ref{A4} hold, and $T>0$ be fixed. Let  $\T$ be an
  admissible mesh on $\R^d$ with size $h$ in the sense of Definition \ref{def:admissible mesh}.
  Let $\Delta t=\frac{T}{N}$ be the time step for some $N\in \mathbb{N}^*$ satisfying 
  \begin{align*}
   \frac{\Delta t}{h} \goto 0 \,\,\text{as}\,\, h\goto 0.
  \end{align*}
  Then, $\mathbb{P}$-a.s.~in $\Omega$, for any $\beta \in \mathcal{A}$ and for any nonnegative test function 
  $\psi\in C_c^\infty([0, T)\times\R^d)$, the following inequality holds:
  \begin{align}
   & \int_{\R^d} \beta(u_0(x))\psi(0,x)\,dx  + \int_{\Pi_T} \Big\{\beta(u_{\T,\,\Delta t}^h)\partial_t \psi(t,x) + 
  F^\beta(u_{\T,\,\Delta t}^h) \vec{v}(t,x)\cdot \grad_x \psi(t,x)\Big\}\,dt\,dx \notag \\
   & + \int_{\R^d}\int_{0}^T \int_{\mathbf{E}} \int_0^1 \eta(u_{\T,\,\Delta t}^h;z) \beta^\prime \big(u_{\T,\,\Delta t}^h + \lambda 
   \eta(u_{\T,\,\Delta t}^h;z)\big)\psi(t,x)\,d\lambda \,\tilde{N}(dz,dt)\,dx \notag \\
    & + \int_{\R^d}\int_{0}^T \int_{\mathbf{E}}\int_0^1 (1-\lambda) 
    \eta^2(u_{\T,\,\Delta t}^h;z) \beta^{\prime\prime} \big(u_{\T,\,\Delta t}^h + \lambda 
   \eta(u_{\T,\,\Delta t}^h;z)\big)\psi(t,x) d\lambda\,m(dz)\,dt\,dx \notag \\
    & \ge \mathcal{R}^{h,\,\Delta t}, \label{eq: entropy inequality for approximate solution}
  \end{align}
where for any $\mathbb{P}$-measurable set $B$, $\mathbb{E}\Big[ {\bf 1}_B \mathcal{R}^{h,\,\Delta t}\Big]\goto 0$ as $h\goto 0$.
\end{prop}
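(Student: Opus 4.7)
The plan is to start from the discrete entropy inequality proved in Proposition~\ref{prop:discrete entropy ineq} and rewrite each of its four sums as the corresponding continuous integral against $u_{\T,\Delta t}^h$, absorbing every discrepancy into the remainder $\mathcal{R}^{h,\Delta t}$. First I would handle the temporal difference term $-\sum_n\sum_K\int_K(\beta(u_K^{n+1})-\beta(u_K^n))\psi(t_n,x)\,dx$ by Abel summation. Since $\psi$ has compact support in $[0,T)\times\R^d$, for $h$ small enough the boundary term at $t_{N-1}$ vanishes, leaving
\begin{align*}
\sum_{K\in\T_R}\int_K\beta(u_K^0)\psi(0,x)\,dx+\sum_{n=1}^{N-1}\sum_{K\in\T_R}\int_K\beta(u_K^n)\int_{t_{n-1}}^{t_n}\partial_t\psi(t,x)\,dt\,dx.
\end{align*}
The first sum converges to $\int_{\R^d}\beta(u_0(x))\psi(0,x)\,dx$ as $h\goto0$ by Jensen's inequality (applied to $u_K^0=\frac{1}{|K|}\int_K u_0\,dx$), Lipschitz continuity of $\beta$ on compacts combined with the uniform moment bound in Lemma~\ref{lem:moment estimate}, and continuity of $\psi(0,\cdot)$; the error is deterministic and vanishes in $L^1$. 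The second sum differs from $\int_{\Pi_T}\beta(u_{\T,\Delta t}^h)\partial_t\psi(t,x)\,dt\,dx$ only by one time-slab shift, which contributes $O(\Delta t\,\|\partial_t\psi\|_\infty)$ in $L^1(\Omega)$ thanks to Lemma~\ref{lem:moment estimate}.

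Second, in the flux, stochastic, and compensator terms appearing in Proposition~\ref{prop:discrete entropy ineq}, I would replace $\psi(t_n,x)$ by $\psi(t,x)$ inside each time integral on $[t_n,t_{n+1}]$, incurring errors controlled by $\Delta t\,\|\partial_t\psi\|_\infty$ (resp.\ $\Delta t\,\|\partial_t\nabla\psi\|_\infty$ for the flux integrand). For the flux term this also means writing $F^\beta(u_K^n)=F^\beta(u_{\T,\Delta t}^h)$ on each cell-slab $K\times[t_n,t_{n+1}]$, so the main term is already exactly $\int_{\Pi_T}F^\beta(u_{\T,\Delta t}^h)\vec v(t,x)\cdot\nabla\psi(t,x)\,dx\,dt$; the error from the time-shift of $\nabla\psi$ is bounded by $\|\vec v\|_\infty\,\|\partial_t\nabla\psi\|_\infty\,\Delta t\,c_f\,\|u_{\T,\Delta t}^h\|_{L^1(\Omega\times B(0,R)\times[0,T))}$, which vanishes. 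Similarly, the deterministic compensator term yields an error of order $\Delta t\,\|\partial_t\psi\|_\infty\,c_\eta\,\|\beta''\|_\infty\,\mathbb E\|u_{\T,\Delta t}^h\|^2$ by \ref{A3} and Lemma~\ref{lem:moment estimate}.

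For the stochastic integral the replacement error is
\begin{align*}
\mathcal{S}^{h,\Delta t}:=\sum_{n,K}\int_{t_n}^{t_{n+1}}\!\!\int_{\mathbf{E}}\!\int_K\!\int_0^1\eta(u_K^n;z)\beta'\big(u_K^n{+}\lambda\eta(u_K^n;z)\big)\big(\psi(t,x){-}\psi(t_n,x)\big)d\lambda\,dx\,\tilde{N}(dz,dt),
\end{align*}
and by It\^o-L\'evy isometry combined with $\eta^2\le C^{*2}h_1^2$, $|\psi(t,x)-\psi(t_n,x)|\le\Delta t\,\|\partial_t\psi\|_\infty$, one obtains $\mathbb{E}\big[\mathbf{1}_B\mathcal{S}^{h,\Delta t}\big]^2\le C\,(\Delta t)^2\,c_\eta\,T\,|B(0,R)|\,\|\beta'\|_\infty^2$ which tends to $0$. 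Collecting all errors plus the remainder $R^{h,\Delta t}$ from Proposition~\ref{prop:discrete entropy ineq} into $\mathcal{R}^{h,\Delta t}$ produces the claimed inequality~\eqref{eq: entropy inequality for approximate solution}.

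The main obstacle is keeping the stochastic error genuinely small when replacing $\psi(t_n,x)$ by $\psi(t,x)$ inside the Poisson stochastic integral: one must avoid naive pointwise estimates (which would only give boundedness) and exploit the predictability of the integrand $\psi(t_n,x)$ versus $\psi(t,x)$ combined with the isometry, so that the smallness comes from a quadratic average rather than a supremum. This is the same mechanism that forces the strengthened assumption $\Delta t/h\goto0$ (rather than the weaker CFL condition~\eqref{cond:cfl}) used throughout the previous proposition, and it is where the boundedness $|\eta(u,z)|\le C^* h_1(z)$ from \ref{A3} is essential.
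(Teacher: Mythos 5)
Your proposal follows essentially the same route as the paper: summation by parts on the time-difference term (with the boundary term killed by $\operatorname{supp}\psi\subset[0,T)$), absorption of all discrepancies between the discrete sums of Proposition \ref{prop:discrete entropy ineq} and the continuous integrals into $\mathcal{R}^{h,\Delta t}$, and control of each error via the moment estimate of Lemma \ref{lem:moment estimate}, the $O(\Delta t)$ time-shift of $\psi$ and its derivatives, and the It\^{o}--L\'{e}vy isometry for the stochastic replacement error. The only cosmetic differences are that the paper handles the initial-data term via the Lebesgue differentiation theorem and bounds the martingale error using the linear-growth estimate $|\eta(u;z)|\le\lambda^*|u|h_1(z)$ rather than the uniform bound $|\eta(u;z)|\le C^*h_1(z)$, but both yield a vanishing remainder.
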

\begin{proof}
Let the assumptions of the proposition hold true. Since  $ \displaystyle \frac{\Delta t}{h} \goto 0$ as 
 $h\goto 0$, we may assume that the CFL condition 
 \begin{align*}
  \Delta t \le \frac{(1-\xi)h}{c_f V}\alpha^2 
 \end{align*}
 holds for some $\xi \in (0,1)$ and hence the estimates given in Lemmas \ref{lem:moment estimate} and \ref{lem:weak bv} and  
  Proposition \ref{prop:discrete entropy ineq} hold as well. Let  $\psi\in C_c^\infty([0, T)\times\R^d)$ be a nonnegative test function. Then there exists $R>h$ such that 
  supp $\psi \subset  [0,T)\times B(0,R-h)$. Also define $\T_R =\{ K\in \T : K\subset B(0,R)\}$.
\vspace{.1cm}

Note that $\psi(t_N,x)=0$ for any $x\in \R^d$. Using the summation by parts formula, $$ \sum_{n=1}^{N} a_n \big(b_n -b_{n-1}\big)= a_N b_N -a_0 b_0 -\sum_{n=0}^{N-1} b_n \big(a_{n+1}-
a_n\big)$$
one has
\begin{align}
  - & \sum_{n=0}^{N-1}\sum_{ K\in \T_R} \int_{K}  \big( \beta(u_K^{n+1})-\beta(u_K^n)\big) \psi(t_n,x)\,dx \notag \\
  & = \sum_{ K\in \T_R} \int_{K} \beta(u_K^0)\psi(0,x)\,dx + \int_{\Delta t}^{T}\int_{\R^d} \beta(u_{\T,\,\Delta t}^h)
  \partial_t \psi(t-\Delta t,x)\,dx\,dt. \label{eq:a(h,delta t)}
\end{align}
 Let $R^{h,\Delta t}$ be the quantity as in Proposition \ref{prop:discrete entropy ineq}. Define 
\begin{align*}
 \mathcal{R}^{h,\Delta t}
=& R^{h,\Delta t} + \Bigg\{ \int_{\R^d} \beta(u_0(x))\psi(0,x)\,dx - \sum_{K\in \T_R}\int_{K} 
 \beta(u_K^0)\psi(0,x)\,dx\Bigg\} \notag \\
  + & \Bigg\{ \int_{\Pi_T} \beta(u_{\T,\, \Delta t}) \partial_t \psi(t,x)\,dt\,dx - 
 \int_{\Delta t}^{T}\int_{\R^d} \beta(u_{\T,\,\Delta t}^h) \partial_t \psi(t-\Delta t,x)\,dx\,dt\Bigg\} \notag \\
  + & \Bigg\{ \int_{\Pi_T} F^\beta(u_{\T,\,\Delta t}^h) \vec{v}(t,x)\cdot\grad_x \psi(t,x)\,dt\,dx -
 \sum_{n=0}^{N-1} \sum_{K\in \T_R} \int_{t_n}^{t_{n+1}} \int_{K} F^\beta(u_K^n)\vec{v}(t,x)\cdot \grad_x \psi(t_n,x)\,dx\,dt \Bigg\} \notag \\
  + & \Bigg\{ \int_{\R^d}\int_{0}^T \int_{\mathbf{E}} \int_0^1 \eta(u_{\T,\,\Delta t}^h;z) \beta^\prime \big(u_{\T,\,\Delta t}^h + \lambda 
   \eta(u_{\T,\,\Delta t}^h;z)\big)\psi(t,x)\,d\lambda \,\tilde{N}(dz,dt)\,dx \notag \\ 
   & \quad - \sum_{n=0}^{N-1} \sum_{K\in \T_R} \int_{t_n}^{t_{n+1}} \int_{\mathbf{E}} \int_{K}\int_0^1 \eta(u_K^n;z) \beta^\prime \big(u_K^n + \lambda 
   \eta(u_K^n;z)\big)\psi(t_n,x)\,d\lambda\,dx\,\tilde{N}(dz,dt) \Bigg\} \notag \\
  +& \Bigg\{ \int_{\R^d}\int_{0}^T \int_{\mathbf{E}}\int_0^1 (1-\lambda) \eta^2(u_{\T,\,\Delta t}^h;z) \beta^{\prime\prime} \big(u_{\T,\,\Delta t}^h + \lambda 
   \eta(u_{\T,\,\Delta t}^h;z)\big)\psi(t,x)\, d\lambda\,m(dz)\,dt\,dx \notag \\ 
    & - \sum_{n=0}^{N-1} \sum_{K\in \T_R} \int_{t_n}^{t_{n+1}} \int_{\mathbf{E}} \int_{K}\int_0^1 (1-\lambda) \eta^2(u_K^n;z) 
    \beta^{\prime\prime} \big(u_K^n + \lambda \eta(u_K^n;z)\big)\psi(t_n,x)\,d\lambda\,dx\,m(dz)\,dt \Bigg\} \notag \\
    & \equiv R^{h,\Delta t} + \mathcal{I}^{h,\Delta t} + \mathcal{T}^{h,\Delta t} + \mathcal{D}^{h,\Delta t} + 
    \mathcal{M}^{h,\Delta t} + \mathcal{A}^{h,\Delta t}.
\end{align*}
In view of Proposition \ref{prop:discrete entropy ineq} and the definition of $\mathcal{R}^{h,\Delta t}$ along with \eqref{eq:a(h,delta t)}, we note
that \eqref{eq: entropy inequality for approximate solution} holds. 

In order to prove the proposition, it remains to prove the convergence of the following quantities:
$\mathbb{E}\Big[{\bf 1}_B R^{h,\Delta t}\Big]$, $\mathbb{E}\Big[{\bf 1}_B \mathcal{I}^{h,\Delta t}\Big]$, $\mathbb{E}\Big[{\bf 1}_B \mathcal{T}^{h,\Delta t}\Big]$,
$\mathbb{E}\Big[{\bf 1}_B \mathcal{D}^{h,\Delta t}\Big]$, $\mathbb{E}\Big[{\bf 1}_B \mathcal{M}^{h,\Delta t}\Big]$ and 
$\mathbb{E}\Big[{\bf 1}_B \mathcal{A}^{h,\Delta t}\Big]$, where $B$ is any $\mathbb{P}$- measurable subset of $\Omega$.
\vspace{.1cm}

\noindent{1. \underline{\bf Convergence of $\mathbb{E}\big[ {\bf 1}_B\,\mathcal{I}^{h,\Delta t}\big]$:}} 
Note that, due to Lebesgue differentiation theorem, for almost all $x\in K$, 
$\big| u_0(x)- u_K^0\big| \longrightarrow 0$ as diameter of $K$ tends to zero  $(\text{i.e.,}\, h\goto 0)$. Now 
\begin{align*}
 \Big|\mathbb{E}\big[ {\bf 1}_B\,\mathcal{I}^{h,\Delta t}\big] \Big|& = \Big| \mathbb{E}\Big[ {\bf 1}_B \sum_{K\in \T_R}\int_{K} \big(\beta(u_0(x))-\beta(u_K^0)\big)
 \psi(x,0)\,dx \Big]\Big| \\
 & \le ||\beta^\prime||_{\infty} \mathbb{E} \Big[ \sum_{K\in \T_R}\int_{K} \big| u_0(x)- u_K^0\big|\psi(x,0)\,dx\Big],
\end{align*}
and hence $\mathbb{E}\big[ {\bf 1}_B\,\mathcal{I}^{h,\Delta t}\big]\longrightarrow 0$ as $h\goto 0$.
\vspace{.1cm}

\noindent{2. \underline{\bf Convergence of $\mathbb{E}\big[ {\bf 1}_B\,\mathcal{T}^{h,\Delta t}\big]$:}}  In view of Lemma \ref{lem:weak bv} and 
 the CFL condition \eqref{cond:CFL-1}, we obtain
\begin{align*}
 & \Big|\mathbb{E}\big[ {\bf 1}_B\,\mathcal{T}^{h,\Delta t}\big]\Big|\notag \\
   = & \Bigg| \mathbb{E}\Big[  {\bf 1}_B  \int_{0}^{\Delta t}\int_{\R^d} \beta(u_{\T,\Delta t}^h)\partial_t 
 \psi(x,t)\,dx\,dt \Big] + \mathbb{E}\Big[ {\bf 1}_B \int_{\Delta t}^{T} \int_{\R^d} \beta(u_{\T,\Delta t}^h) \big( \partial_t \psi(t,x)- \partial_t \psi(t-\Delta t,x)
 \big)\,dx\,dt \Big] \Bigg| \\
  \le & ||\beta^\prime||_{\infty}\, \Delta t \Big(  ||\partial_t \psi||_{\infty} \|u_{\T,\Delta t}^h\|_{L^\infty\big(0,T; L^1(\Omega \times
 B(0,R))\big)} +  ||\partial_{tt} \psi||_{\infty}\, \|u_{\T,\Delta t}^h\|_{L^1\big(\Omega \times B(0,R)\times [0,T)\big)}\Big),
\end{align*}
and hence $\mathbb{E}\big[ {\bf 1}_B\,\mathcal{T}^{h,\Delta t}\big]\longrightarrow 0$ as $h\goto 0$.
\vspace{.1cm}

\noindent{3. \underline{\bf Convergence of $\mathbb{E}\big[ {\bf 1}_B\,\mathcal{D}^{h,\Delta t}\big]$:}} In view of the assumption \ref{A3}, one has
\begin{align*}
 \Big| \mathbb{E}\big[ {\bf 1}_B\,\mathcal{D}^{h,\Delta t}\big] \Big|
 & \le V ||\grad_x \partial_t \psi||_{\infty} \Delta t \sum_{n=0}^{N-1}\sum_{K\in \T_R}\int_{K}\int_{t_n}^{t_{n+1}} \mathbb{E}\Big[\big| 
 F^\beta(u_K^n)\big|\Big] \,dx\,dt \\
 & \le V ||\grad_x \partial_t \psi||_{\infty} \Delta t ||\beta^\prime||_{\infty} c_f \sum_{n=0}^{N-1}\sum_{K\in \T_R}\int_{K}\int_{t_n}^{t_{n+1}}
 \mathbb{E}\big[|u_K^n|\big] \,dx\,dt \\
 & \le V ||\grad_x \partial_t \psi||_{\infty} \Delta t ||\beta^\prime||_{\infty} c_f 
 \|u_{\T,\Delta t}^h\|_{L^1 \big(\Omega \times B(0,R)\times [0,T)\big)}  \longrightarrow 0\,\,\text{as}\,\, h \goto 0.
\end{align*}
Thanks to Lemma \ref{lem:moment estimate} and the CFL condition 
\eqref{cond:CFL-1}, one can pass to the limit in the last line as well.
\vspace{.1cm}

\noindent{4. \underline{\bf Convergence of $\mathbb{E}\big[ {\bf 1}_B\,\mathcal{M}^{h,\Delta t}\big]$:}}
 By using Cauchy-Schwartz inequality, It\^{o}-L\'{e}vy isometry, the CFL condition \eqref{cond:CFL-1} and Lemma \ref{lem:moment estimate}, we obtain

 \begin{align*}
  \Big| E\big[ {\bf 1}_B\,\mathcal{M}^{h,\Delta t}\big]\Big|^2 
  & \le |B(0,R)| \Bigg(\sum_{n=0}^{N-1}\Bigg\{ \mathbb{E}\Big[ \sum_{K\in \T_R} \int_{t_n}^{t_{n+1}} \int_{\mathbf{E}} \int_{K}\int_0^1 \eta^2(u_K^n;z)
  {\beta^{\prime}}^2 \big(u_K^n + \lambda \eta(u_K^n;z)\big)  \\
  & \hspace{5cm} \times \big( \psi(t,x)-\psi(t_n,x)\big)^2 \,d\lambda\,dx\,m(dz)\,dt \Big]\Bigg\}^\frac{1}{2}\Bigg)^2 \\
  & \le C(R,\psi,c_\eta) \Delta t \Bigg(\sum_{n=0}^{N-1} \Delta t \Big( \sum_{K\in \T_R} |K| \mathbb{E}\big[(u_K^n)^2\big]\Big)^\frac{1}{2}\Bigg)^2 \\
  & \le C(R,\psi,c_\eta, T) \Delta t \|u_{\T,\Delta t}^h\|_{L^\infty(0,T;L^2(\Omega \times \R^d))}^2 
    \longrightarrow 0\,\,\text{as}\,\, h \goto 0.
  \end{align*}
 
 \vspace{.1cm}

\noindent{5. \underline{\bf Convergence of $\mathbb{E}\big[ {\bf 1}_B\,\mathcal{A}^{h,\Delta t}\big]$:}} Note that 
\begin{align*}
 \mathcal{A}^{h,\Delta t}& = \sum_{n=0}^{N-1} \sum_{K\in \T_R} \int_{t_n}^{t_{n+1}} \int_{\mathbf{E}} \int_{K}\int_0^1 (1-\lambda) \eta^2(u_K^n;z) 
    \beta^{\prime\prime} \big(u_K^n + \lambda \eta(u_K^n;z)\big) \\
    & \hspace{5cm} \times \Big\{ \psi(t,x)-\psi(t_n,x)\Big\}\,d\lambda\,dx\,m(dz)\,dt.
\end{align*}
Therefore, by \eqref{cond:CFL-1} and Lemma \ref{lem:moment estimate}, we obtain 
\begin{align*}
 \Big|\mathbb{E}\big[ {\bf 1}_B\,\mathcal{A}^{h,\Delta t}\big] \Big| & \le ||\beta^{\prime \prime}||_{\infty} \Delta t ||\partial_t \psi||_{\infty}
 \mathbb{E}\Big[ \sum_{n=0}^{N-1} \sum_{K\in \T_R} \int_{t_n}^{t_{n+1}} \int_{\mathbf{E}} \int_{K} \eta^2(u_K^n;z) \,dx\,m(dz)\,dt\Big] \\
 & \le C(\beta,\psi,c_\eta) \Delta t \sum_{n=0}^{N-1} \sum_{K\in \T_R}
 \int_{t_n}^{t_{n+1}} \int_{K} \mathbb{E}\big[(u_K^n)^2\big] \,dx\,dt
    \longrightarrow 0\,\,\text{as}\,\, h \goto 0.
\end{align*}
\vspace{.1cm}

\noindent{6. \underline{\bf Convergence of $\mathbb{E}\big[ {\bf 1}_B\,R^{h,\Delta t}\big]$:}}  Thanks to Proposition \ref{prop:discrete entropy ineq}, 
we have seen that $$\mathbb{E}\big[ {\bf 1}_B\,R^{h,\Delta t}\big] \longrightarrow 0\,\, \text{as} \,\,h \goto 0,$$
for any $\mathbb{P}$-measurable set $B$.
\end{proof}
\section{Proof of the main theorem} \label{sec:proof-maintheorem}
In this section, we establish the convergence of the scheme and hence existence of entropy solution to the underlying problem 
\eqref{eq:levy_con_laws}. Note that {\it a-priori} estimates on $u_{\T,\Delta t}^h (x,t)$ given by Lemma \ref{lem:moment estimate}
only guarantee weak compactness of the family $\{u_{\T,\Delta t}^h\}_{h>0}$, which is inadequate in view of the nonlinearities in the 
equation. The concept of Young measure theory is appropriate in this case. We now recapitulate the results we shall use from Young measure 
theory due to Dafermos \cite{dafermos} and Panov \cite{panov} for the deterministic setting, and Balder \cite{Balder} for the 
stochastic version of the theory.
\subsection{Young measure and convergence of approximate solutions} 
Roughly speaking a Young measure is a
parametrized family of probability measures where the parameters are drawn from a measure space. 
Let $(\Theta, \Sigma, \mu)$ be a $\sigma$-finite measure space and $\mathcal{P}(\R)$ be the space of probability measures on $\R$.

\begin{defi}[Young Measure]
A Young measure from $\Theta$ into $\R$ is a map $\tau \mapsto \mathcal{P}(\R)$ such that for any $\phi \in C_b(\R)$, 
$ \theta \mapsto \langle \tau(\theta), \phi\rangle: = \int_{\R} \phi(\xi) \tau(\theta)(d\xi)$ is measurable from $\Theta$ to $\R$.
The set of all Young measures from $\Theta$ into $\R$ is denoted by $\mathcal{R}(\Theta, \Sigma, \mu).$
\end{defi}

In this context, we mention that with an appropriate choice of $(\Theta, \Sigma, \mu)$, the family
$\{u_{\T,\Delta t}^h\}_{h>0}$
can be thought of as a family of Young measures. 
We are interested in finding a subsequences out of this family that ``converges" to a Young measure in a suitable sense. To this end, we consider the
  predictable $\sigma$-field of $\Omega\times(0,T)$ with respect to $\{\mathcal{F}_t\}$, denoted by $\mathcal{P}_T$, and set 
\begin{align*}
  \Theta = \Omega\times (0,T)\times \R^d,\quad \Sigma = \mathcal{P}_T \times \mathcal{L}(\R^d)\quad \text{and} \quad \mu= P\otimes \lambda_t\otimes \lambda_x,
\end{align*} where $\lambda_t$ and $\lambda_x$ are respectively the Lebesgue measures on $(0,T)$ and $\R^d$.
Moreover, for $M\in \mathbb{N}$, set
  $ \Theta_M = \Omega\times (0,T)\times B_M,$ where $B_M$ be the ball of radius $M$ around zero in $\R^d$. We sum up the necessary results in the following lemma to carry over 
the subsequent analysis. For a proof of this lemma, consult \cite{BaVaWit,BisKarlMaj}.
\begin{prop} \label{prop:young-measure}
  Let  $\{u_{\T,\,\Delta t}^h(t,x)\}_{{h}> 0}$ be a sequence of  $L^2(\R^d)$-valued predictable processes such that
  \eqref{estimate: moment-1} holds.Then there exists a subsequence $\{{h}_n\}$ with ${h}_n\goto 0$ and a Young measure
  $\tau\in \mathcal{R}(\Theta, \Sigma, \mu) $ such that the following hold:
  \begin{itemize}
   \item [(A)] If  $g(\theta,\xi)$ is a Carath\'{e}odory function on $\Theta\times \R$ such that $\mbox{supp}(g)\subset \Theta_M\times \R$
 for some $M \in \mathbb{N}$ and $\{g(\theta, u_{\T,\Delta t}^{h_n}(\theta))\}_n$ (where $\theta\equiv (\omega; t, x)$) is uniformly 
 integrable,  then 
 \begin{align*}
    \lim_{h_n\rightarrow 0} \int_{\Theta}g(\theta, u_{\T,\Delta t}^{h_n}(\theta))\,\mu(d\theta)
    = \int_\Theta\Big[\int_{\R} g(\theta, \xi)\tau(\theta)(\,d\xi)\Big]\,\mu(d \theta).
    \end{align*}
\item [(B)]  Denoting a triplet $(\omega,x,t)\in \Theta$ by $\theta$, we define 
\begin{align*}
 u(\theta,\alpha)=\inf\Big\{ c\in \R: \tau(\theta)\big((-\infty,c)\big)>\alpha \Big\}\quad \text{for}\quad \alpha \in (0,1)
 ~\text{and}~\theta\in \Theta.
\end{align*} 
 Then, $u(\theta,\alpha)$ is non-decreasing, right continuous on $(0,1)$ and
 $ \mathcal{P}_T\times \mathcal{L}(\R^d\times (0,1))$- measurable.
  Moreover, if $g(\theta,\xi)$ is a nonnegative Carath\'{e}odory function on $\Theta\times \R$, then 
  \begin{align*}
 \int_{\Theta}\Big[\int_\R g(\theta,\xi)\tau(\theta)(\,d\xi)\Big]\, \mu \,(d\theta)
 = \int_\Theta\int_{\alpha=0}^1 g(\theta, u(\theta,\alpha))\,d\alpha\, \mu(d\theta). 
 \end{align*}
\end{itemize}
\end{prop}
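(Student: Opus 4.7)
The plan is to derive both parts from the stochastic version of the fundamental theorem of Young measures (Balder \cite{Balder}, as adapted in \cite{BaVaWit,BisKarlMaj}), which extends the classical Dafermos--Panov Young measure compactness to the probabilistic triple $(\Theta, \Sigma, \mu)$ with $\Sigma$ containing the predictable $\sigma$-field $\mathcal{P}_T$. The key structural input is the moment bound \eqref{estimate: moment-1}, which I would use to produce a tightness condition sufficient for subsequential Young-measure generation.

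For Part (A), I would first observe that \eqref{estimate: moment-1} yields, for every $M \in \mathbb{N}$,
\[
\sup_{h>0} \int_{\Theta_M} |u_{\T,\Delta t}^h(\theta)|^2 \, \mu(d\theta) < +\infty,
\]
and hence by Chebyshev's inequality, for every $\eta>0$ there exists a compact $K_{\eta,M}\subset \R$ with $\sup_h \mu\bigl(\{\theta\in \Theta_M : u_{\T,\Delta t}^h(\theta)\notin K_{\eta,M}\}\bigr)<\eta$. This Prokhorov-type tightness together with the $\sigma$-finiteness of $\mu$ on each $\Theta_M$ lets me invoke Balder's compactness to extract a subsequence $\{h_n\}$ and a $\Sigma$-measurable Young measure $\tau$ for which convergence holds for every bounded Carath\'eodory $g$ compactly supported in $\Theta_M\times\R$. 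To upgrade to uniformly integrable $g$, I would introduce a smooth cutoff $\chi_N(\xi)$ that is $1$ for $|\xi|\le N$ and $0$ for $|\xi|\ge N+1$, split $g = g\chi_N + g(1-\chi_N)$, pass to the limit in the bounded piece using the previous step, and control $g(1-\chi_N)$ on the sequence side via uniform integrability and on the Young-measure side via Fatou; letting $N\to\infty$ closes the argument.

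For Part (B), I would construct $u(\theta,\alpha)$ as the generalized right-continuous inverse of $c\mapsto F_\theta(c):=\tau(\theta)((-\infty,c))$. Monotonicity in $\alpha$ is immediate from the definition; right continuity follows from $u(\theta,\alpha)\le c \iff F_\theta(c)>\alpha$ (up to the countable set where $F_\theta$ jumps). Joint measurability reduces to joint measurability of $(\theta,c)\mapsto F_\theta(c)$, which is obtained by approximating $\mathbf{1}_{(-\infty,c)}$ by a monotone sequence of functions in $C_b(\R)$ and exploiting $\Sigma$-measurability of $\theta\mapsto\langle\tau(\theta),\phi\rangle$ for each such $\phi$; the representation $u(\theta,\alpha)=\inf\{c:F_\theta(c)>\alpha\}$ then yields $\mathcal{P}_T\times \mathcal{L}(\R^d\times(0,1))$-measurability. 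The integral identity is the inverse-CDF transport representation: for each fixed $\theta$, if $\alpha\sim U(0,1)$ then $u(\theta,\alpha)$ has law $\tau(\theta)$, so for every nonnegative Borel $\tilde g$,
\[
\int_\R \tilde g(\xi)\,\tau(\theta)(d\xi) = \int_0^1 \tilde g(u(\theta,\alpha))\,d\alpha.
\]
Applying this pointwise with $\tilde g(\xi)=g(\theta,\xi)$ and then integrating in $\theta$ by Tonelli (justified by nonnegativity) gives the claimed formula.

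The main obstacle I anticipate is not the formal construction but the measurability in the stochastic setting: the classical Young-measure theorem only supplies Borel measurability of $\tau$ and $u$, whereas Definition~\ref{defi:generality entropy solution} requires $u(\cdot,\cdot,\alpha)$ to be predictable. Ensuring that $\tau$ is $\mathcal{P}_T\otimes\mathcal{L}(\R^d)$-measurable requires the refined Balder-type statement in which predictability of the generating sequence $\{u_{\T,\Delta t}^{h_n}\}$ is transferred to the limiting Young measure, e.g.\ by performing the subsequence extraction within the class of $\mathcal{P}_T$-measurable parametrizations and checking that the narrow-convergence functionals used in the compactness proof separate $\mathcal{P}_T$-measurable maps. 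Once this is in place, Part~(B) measurability follows routinely from the inf representation of $u$.
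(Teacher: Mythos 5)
Your proposal is correct and follows essentially the same route as the paper, which does not prove Proposition \ref{prop:young-measure} itself but defers to the stochastic Young-measure compactness results of Balder as adapted in \cite{BaVaWit,BisKarlMaj}: tightness on each $\Theta_M$ from the uniform $L^2$ bound, subsequential extraction of a $\Sigma$-measurable Young measure, a truncation/uniform-integrability argument for part (A), and the quantile (inverse-CDF) representation with Tonelli for part (B). Your closing remark about transferring predictability to the limit Young measure is precisely the point handled in those references by performing the extraction within $\mathcal{R}(\Theta,\Sigma,\mu)$ with $\Sigma=\mathcal{P}_T\times\mathcal{L}(\R^d)$ from the outset.
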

\subsection{Proof of the main theorem}\label{subsec:proof-main-thm}
Having all the necessary {\it a priori} bounds and entropy inequality on $u_{\T, \Delta t}^h$, we are now ready to prove the main theorem (cf. Main Theorem \ref{thm:maintheorem}). Here we mentioned that 
$u(\theta,\alpha)$ given by Proposition \ref{prop:young-measure} will serve as a possible generalized entropy solution to \eqref{eq:levy_con_laws} for the above
choice of the measure space $(\Theta, \Sigma, \mu)$. In view of \eqref{eq: entropy inequality for approximate solution}, we have for any $B\in \mathcal{F}_T$
  \begin{align}
    &  \mathbb{E}\Big[ {\bf 1}_{B} \int_{\Pi_T} \Big\{\beta(u_{\T,\,\Delta t}^h)\partial_t \psi(t,x) + 
  F^\beta(u_{\T,\,\Delta t}^h)\vec{v}(t,x)\cdot \grad_x \psi(t,x)\Big\}\,dt\,dx\Big] \notag \\
   & + \mathbb{E}\Big[ {\bf 1}_{B} \int_{\R^d}\int_{0}^T \int_{\mathbf{E}} \int_0^1 \eta(u_{\T,\,\Delta t}^h;z) \beta^\prime \big(u_{\T,\,\Delta t}^h + \lambda 
   \eta(u_{\T,\,\Delta t}^h;z)\big)\psi(t,x)\,d\lambda \,\tilde{N}(dz,dt)\,dx \Big]\notag \\
    & + \mathbb{E}\Big[ {\bf 1}_{B} \int_{\R^d}\int_{0}^T \int_{\mathbf{E}}\int_0^1 (1-\lambda) 
    \eta^2(u_{\T,\,\Delta t}^h;z) \beta^{\prime\prime} \big(u_{\T,\,\Delta t}^h + \lambda 
   \eta(u_{\T,\,\Delta t}^h;z)\big)\psi(t,x)\, d\lambda\,m(dz)\,dt\,dx \Big]\notag \\
   & +  \mathbb{E}\Big[ {\bf 1}_{B}\int_{\R^d} \beta(u_0(x))\psi(0,x)\,dx\Big] \ge \mathbb{E} \Big[ {\bf 1}_{B}\mathcal{R}^{h,\,\Delta t}\Big] \notag \\
   & \text{i.e.,} \qquad  \mathcal{T}_1 + \mathcal{T}_2 + \mathcal{T}_3 + \mathbb{E}\Big[ {\bf 1}_{B}\int_{\R^d} \beta(u_0(x))\psi(0,x)\,dx\Big] \ge \mathbb{E} 
   \Big[ {\bf 1}_{B}\mathcal{R}^{h,\,\Delta t}\Big]. \label{eq:final entropy inequality for approximate solution}
  \end{align}
  We would like to pass the limit in \eqref{eq:final entropy inequality for approximate solution} as $h$ approaches to 
  zero. To do this, here we use the technique of Young measure theory in stochastic setting. Let $(\Theta, \Sigma, \mu)$ be a $\sigma$-
  finite measure space as mentioned previously. 
Note that $ L^2(\Theta, \Sigma, \mu)$ is a closed subspace of the larger space $L^2\Big(0,T; L^2({(\Omega, \mathcal{F}_T)}, L^2(\R^d))\Big)$ and hence the weak convergence
in $ L^2\big(\Theta, \Sigma, \mu\big)$ would imply weak convergence
in $L^2\Big(0,T; L^2({(\Omega, \mathcal{F}_T)}, L^2(\R^d))\Big)$. Now, for any $B \in \mathcal{F}_T$, the functions 
${\bf 1}_B \partial_t \psi(t,x),\,{\bf 1}_B \partial_{x_i} \psi(t,x)$ and ${\bf 1}_B \psi(t,x)$ are all members of \\
$L^2\Big(0,T; L^2({(\Omega, \mathcal{F}_T)}, L^2(\R^d))\Big)$. Therefore, in view of Proposition \ref{prop:young-measure} and the above 
discussion, one has 
\begin{align}
  \lim_{{h}\rightarrow 0} \mathcal{T}_1=  &\lim_{{h}\rightarrow 0}  \mathbb{E}\Big[ {\bf 1}_{B} \int_{\Pi_T} \Big\{\beta(u_{\T,\,\Delta t}^h)\partial_t \psi(t,x) + 
  F^\beta(u_{\T,\,\Delta t}^h) \vec{v}(t,x)\cdot \grad_x \psi(t,x)\Big\}\,dt\,dx\Big] \notag \\
    =& \mathbb{E}\Big[ {\bf 1}_{B} \int_{\Pi_T} \int_0^1 \Big\{\beta(u(t,x,\alpha))\partial_t \psi(t,x) + 
  F^\beta(u(t,x,\alpha)) \vec{v}(t,x)\cdot \grad_x \psi(t,x)\Big\}\,d\alpha\,dt\,dx\Big]. \label{limit-partial-approximation}
 \end{align} 
 Next we want to pass to the limit in $\mathcal{T}_3$. For this, we fix $(\lambda,z)$, and define a Carath\'{e}odory function
 $$ G_{\lambda,z}(r,x,\omega,\xi)= {\bf 1}_{B}(\omega)(1-\lambda)\eta^2(\xi,z)\beta^{\prime\prime}\big(\xi + \lambda \eta(\xi,z)\big)\psi(r,x).$$ 
 Note that $\{G_{\lambda,z}(r,x,\omega,u_{\T,\,\Delta t}^{h_n}(r,x,\omega))\}_{n}$ is uniformly integrable in $L^1((\Theta,\Sigma,\mu);\R)$. Thus, in view of Proposition \ref{prop:young-measure} 
 we have, for fixed $(\lambda, z)\in (0,1)\times \mathbf{E}$
 \begin{align*}
  \lim_{h \rightarrow 0}  &\mathbb{E}\Big[ \int_{\Pi_T} {\bf 1}_{B}  (1-\lambda) 
    \eta^2(u_{\T,\,\Delta t}^h;z) \beta^{\prime\prime} \big(u_{\T,\,\Delta t}^h + \lambda 
   \eta(u_{\T,\,\Delta t}^h;z)\big)\psi(t,x)\,dt\,dx \Big]\notag \\
  =&  \mathbb{E}\Big[ \int_{\Pi_T}\int_{0}^1 {\bf 1}_{B}  (1-\lambda) \eta^2(u(t,x,\alpha);z) \beta^{\prime\prime} \big(u(t,x,\alpha) + \lambda 
   \eta( u(t,x,\alpha);z)\big) \psi(t,x)\,d\alpha\,dt\,dx \Big].
 \end{align*}
 Thanks to the assumption \ref{A3}, and Lemma \ref{lem:moment estimate}, we invoke dominated convergence theorem and have
 \begin{align}
  \lim_{{h}\rightarrow 0} \mathcal{T}_3=  & \lim_{h \rightarrow 0}  \mathbb{E}\Big[ {\bf 1}_{B} \int_{\Pi_T} \int_{\mathbf{E}}\int_0^1 (1-\lambda) 
    \eta^2(u_{\T,\,\Delta t}^h;z) \beta^{\prime\prime} \big(u_{\T,\,\Delta t}^h + \lambda 
   \eta(u_{\T,\,\Delta t}^h;z)\big)\psi(t,x)\, d\lambda\,m(dz)\,dt\,dx \Big]\notag \\
  = & \mathbb{E}\Big[{\bf 1}_{B} \int_{\Pi_T} \int_{\mathbf{E}}\int_0^1 \int_{0}^1 (1-\lambda) 
    \eta^2(u(t,x,\alpha);z) \beta^{\prime\prime} \big(u(t,x,\alpha) + \lambda 
   \eta(u(t,x,\alpha);z)\big) \notag \\
   & \hspace{4cm}\times \psi(t,x)\,d\alpha\, d\lambda\,m(dz)\,dt\,dx \Big].\label{limit-correction term-approximation}
 \end{align}
 Now passage to the limit in the martingale term requires some additional reasoning. Let $\Gamma = \Omega\times [0,T]\times \mathbf{E} $,
 $\mathcal{G}= \mathcal{P}_T \times \mathcal{L}(\mathbf{E})$ and
 $\varsigma = \mathbb{P}\otimes \lambda_t \otimes m(dz)$, where $\mathcal{L}(\mathbf{E})$ represents a Lebesgue $\sigma$- algebra on $\mathbf{E}$. 
The space $L^2\big((\Gamma, \mathcal{G}, \varsigma); \R\big)$ represents the space of square integrable predictable integrands
for It\^{o}-L\'{e}vy integrals with respect to the compensated 
Poisson random measure $\tilde{N}(dz,dt)$. Moreover, by It\^{o}-L\'{e}vy isometry and martingale representation theorem,
it follows that It\^{o}-L\'{e}vy integral defines isometry between two Hilbert spaces $L^2\big((\Gamma, \mathcal{G}, \varsigma); \R\big)$
and $L^2\big((\Omega, \mathcal{F}_T); \R\big)$. In other words, if $\mathcal{I}$ denotes the It\^{o}-L\'{e}vy integral operator, i.e., the application 
\begin{align*}
 \mathcal{I}: L^2\big((\Gamma, \mathcal{G}, \varsigma); \R\big) &\goto L^2\big((\Omega, \mathcal{F}_T); \R\big)\\
 v & \mapsto \int_0^T \int_{\mathbf{E}} v(\omega,z,r)\tilde{N}(dz,dr)
\end{align*}
 and
$\{X_n\}_n$ be sequence in $L^2\big((\Gamma, \mathcal{G}, \varsigma); \R\big)$ weakly converging to $X$; then $\mathcal{I}(X_n)$ will
converge weakly to $\mathcal{I}(X)$ in $L^2\big((\Omega, \mathcal{F}_T); \R\big)$. 
Note that, for fixed $z\in \mathbf{E}$, $G(t,x,\omega,\xi)=\Big(\beta\big(\xi+ \eta(\xi;z)\big)-\beta(\xi)\Big)\psi(t,x)$ is a Carath\'{e}odory 
function and $\{G(t,x,\omega,u_{\T,\,\Delta t}^{h_n}(t,x,\omega))\}_{n}$ is uniformly integrable in $L^1((\Theta,\Sigma,\mu);\R)$.
Therefore, one can apply Proposition \ref{prop:young-measure} and 
 conclude that for $m(dz)$-almost every $z\in \mathbf{E}$ and $g(t,z)\in L^2\big((\Gamma, \mathcal{G}, \varsigma); \R\big) $,
\begin{align*}
 \lim_{h \goto 0} & \mathbb{E}\Big[ \int_{0}^T \int_{\R^d} \Big(\beta\big(u_{\T,\,\Delta t}^h+ \eta(u_{\T,\,\Delta t}^h;z)\big) - \beta(u_{\T,\,\Delta t}^h)\Big)
 \psi(r,x)g(r,z)\,dx\,dr\Big] \notag \\
 &=  \mathbb{E}\Big[ \int_{0}^T \int_{\R^d}\int_{0}^1 \Big(\beta\big( u(r,x,\alpha)+ \eta(u(r,x,\alpha);z)\big)
 -\beta(u(r,x,\alpha))\Big)\psi(r,x)g(r,z)\,d\alpha\,dx\,dr\Big].
\end{align*}
 We apply dominated convergence theorem along with Lemma \ref{lem:moment estimate} and the assumption \ref{A3} to have
\begin{align*}
 &\lim_{h \goto 0}  \mathbb{E}\Big[ \int_{0}^T \int_{\mathbf{E}}\int_{\R^d} \Big(\beta\big(u_{\T,\,\Delta t}^h + \eta(u_{\T,\,\Delta t}^h;z)\big)
 - \beta(u_{\T,\,\Delta t}^h)\Big)\psi(r,x)h(r,z)\,dx\,m(dz)\,dr\Big] \notag \\
 &=  \mathbb{E}\Big[ \int_{0}^T \int_{\mathbf{E}} \Big\{ \int_{\R^d} \int_{0}^1 \Big(\beta\big(u(r,x,\alpha)+ \eta(u(r,x,\alpha);z)
 \big) - \beta(u(r,x,\alpha))\Big) \notag \\
 & \hspace{4cm} \times \psi(r,x)h(r,z)\,d\alpha\,dx \Big\} \,m(dz)\,dr\Big].
\end{align*}
Hence, if we denote 
\begin{align*}
 X_n(t,z)=  \int_{\R^d} \Big(\beta\big( u_{\T,\,\Delta t}^h+ \eta(u_{\T,\,\Delta t}^h;z)\big) - \beta(u_{\T,\,\Delta t}^h)\Big)\psi(t,x)\,dx
\end{align*}
and 
\begin{align*}
 X(t,z)=  \int_{\R^d}\int_{0}^1 \Big(\beta\big( u(t,x,\alpha)+ \eta(u(t,x,\alpha);z)\big)-\beta(u(t,x,\alpha))\Big)\psi(t,x)\,d\alpha\, dx
\end{align*}
then, $X_n$ converges to $X$ in $L^2\big((\Gamma, \mathcal{G}, \varsigma); \R\big)$ which implies, in view of the above discussion
\begin{align*}
\int_{0}^T \int_{\mathbf{E}} X_n(t,z) \tilde{N}(dz,dt) \rightharpoonup \int_0^T \int_{\mathbf{E}} X(t,z) \tilde{N}(dz,dt)\quad  \text{ in} \quad 
L^2\big((\Omega, \mathcal{F}_T); \R\big).
\end{align*}
In other words, since $B\in \mathcal{F}_T$, we obtain 
\begin{align}
   \lim_{{h}\rightarrow 0} \mathcal{T}_2&=  \lim_{{h}\rightarrow 0} \mathbb{E}\Big[ {\bf 1}_{B} \int_{\Pi_T} \int_{\mathbf{E}} \int_0^1 \eta(u_{\T,\,\Delta t}^h;z) \beta^\prime \big(u_{\T,\,\Delta t}^h + \lambda 
   \eta(u_{\T,\,\Delta t}^h;z)\big)\psi(t,x)\,d\lambda \,\tilde{N}(dz,dt)\,dx \Big]\notag \\
   & =  \mathbb{E}\Big[{\bf 1}_{B} \int_{\Pi_T} \int_{\mathbf{E}} \int_0^1 \int_0^1 \eta(u(t,x,\alpha);z) \beta^\prime \big(u(t,x,\alpha) + \lambda 
   \eta(u(t,x,\alpha);z)\big) \notag \\
   & \hspace{4cm}\times \psi(t,x)\,d\alpha\,d\lambda \,\tilde{N}(dz,dt)\,dx \Big]. \label{limit-stochastic term-approximation}
\end{align}
 By \eqref{limit-partial-approximation}, \eqref{limit-correction term-approximation} and  
\eqref{limit-stochastic term-approximation} and the fact that $ \mathbb{E}\Big[{\bf 1}_{B}\mathcal{R}^{h,\Delta t}\Big] \longrightarrow 0$ as $h \goto 0$
(cf. Proposition \ref{prop:entropy inequality on appro solution}),
one can pass to the limit in \eqref{eq:final entropy inequality for approximate solution} yielding \eqref{eq: generalised entropy inequality}. 
  Also, in view of Proposition \ref{prop:young-measure} and the uniform moment estimate \eqref{estimate: moment-1} along
 with Fatou's lemma, we have
 \[ \sup_{0\le t\le T} \mathbb{E}\Big[||u(t,\cdot,\cdot)||_2^2 \Big] < + \infty.\]
 This implies that $u(t,x,\alpha)$ is a generalized entropy solution of \eqref{eq:levy_con_laws}. Again, thanks to Theorem \ref{thm:uniqueness}, we conclude that
 $u(t,x,\alpha)$ is an independent function of variable $\alpha$ and $\bar{u}(t,x)=\int_0^1 u(t,x,\tau) d\tau = u(t,x,\alpha)$ (for almost all $\alpha$) is the unique stochastic entropy solution.
 Moreover, since $u_{\T,\,\Delta t}^h$ is bounded in $L^2(\Omega \times \Pi_T)$, we conclude that $u_{\T,\,\Delta t}^h$ converges to $\bar{u}$ in 
 $L_{\text{loc}}^p(\R^d; L^p (\Omega \times (0,T))$, for $1\le p < 2$. 
 This completes the proof.
\section{Appendix}\label{sec:appendix}
In this section, we study existence and uniqueness of entropy solution for the underlying problem \eqref{eq:levy_con_laws}. 
\subsection{Existence of weak solution for viscous problem}
Just as the deterministic problem, here also we study the corresponding regularized problem by adding a small diffusion operator
and derive some \textit{a priori} bounds. Due to the nonlinearity in equation, one cannot expect classical solution
and instead seeks a weak solution.
 \vspace{.1cm}  

For a small parameter $\eps >0$, we consider the following viscous approximation of \eqref{eq:levy_con_laws}
 \begin{align}
  du(t,x) + \mbox{div}_x (\vec{v}(t,x)f(u(t,x))) \,dt &= \int_{\mathbf{E}} \eta(u(t,x);z)\tilde{N}(dz,dt) + \eps \Delta u(t,x)\,dt,~~(t,x)\in \Pi_T \label{eq:levy_con_laws-viscous} \\
  u(0,x)&=u_0^\eps(x),~~ x\in \R^d, \notag 
\end{align}
where $u_0^{\eps}\in L^2(\R^d)$. To establish existence of a weak solution for \eqref{eq:levy_con_laws-viscous}, we follow \cite{BisMajVal,vallet2008} and use an implicit time discretization
scheme. Let $\Delta t= \frac{T}{N}$ for some fixed positive integer $N \ge 1$.
  Set $t_n= n\,\Delta t$ for $n=0,1,2\,\cdots, N$. Define
 \begin{align}
  \mathcal{N}= L^2(\Omega;H^1(\R^d)),\quad
  \mathcal{N}_n= \{ \text{the }\mathcal{F}_{n\Delta t}\text{ measurable elements of }\mathcal{N}\},\notag \\
  \mathcal{H}= L^2(\Omega;L^2(\R^d)),\quad
  \mathcal{H}_n= \{ \text{the }\mathcal{F}_{n\Delta t}\text{ measurable elements of }\mathcal{H}\}.\notag 
 \end{align}
 The following proposition holds.
 \begin{prop}
  Assume that $\Delta t$ is small with $ \displaystyle \Delta t < \frac{2 \eps}{V^2 c_f^2}$. Then, for any given $u_n \in \mathcal{H}_n$, there exists a
   unique $u_{n+1} \in \mathcal{N}_{n+1}$ such that $\mathbb{P}\text{-a.s.}$ for any $v \in H^1(\R^d)$, the following variational formula 
   holds:
   \begin{align}
    &\int_{\R^d} \Big((u_{n+1}-u_n)v + \Delta t \big\{ \eps \grad u_{n+1}\cdot \grad v -\vec{v}(t_n,x) f(u_{n+1})\cdot \grad v\big\} \Big)\,dx \notag \\
    &= \int_{\R^d} \int_{t_n}^{t_{n+1}} \int_{\mathbf{E}} \eta(u_n;z)\,v\, \tilde{N}(dz,ds)\,dx. \label{variational_formula_discrete}
   \end{align}
 \end{prop}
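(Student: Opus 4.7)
The plan is to rewrite \eqref{variational_formula_discrete} as a pathwise deterministic elliptic-type variational equation with data in $\mathcal H_{n+1}$, and then solve it by a Banach fixed-point argument whose contraction constant matches exactly the given CFL bound.

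First I would absorb the stochastic integral into the data. Since $u_n$ is $\mathcal F_{t_n}$-measurable, for each fixed $x\in\R^d$ the process $(s,z)\mapsto\eta(u_n(x);z)$ is predictable on $[t_n,t_{n+1}]\times\mathbf E$, so
\begin{align*}
\Xi_n(x):=\int_{t_n}^{t_{n+1}}\!\int_{\mathbf E}\eta(u_n(x);z)\,\tilde N(dz,ds)
\end{align*}
is well defined and $\mathcal F_{t_{n+1}}$-measurable. By It\^o--L\'evy isometry together with $\eta(0;z)=0$ and the Lipschitz bound $|\eta(u;z)|\le\lambda^*|u|h_1(z)$ coming from \ref{A3},
\begin{align*}
\mathbb E\|\Xi_n\|_{L^2(\R^d)}^2\le\Delta t\,(\lambda^*)^2 c_\eta\,\mathbb E\|u_n\|_{L^2}^2<+\infty,
\end{align*}
so $g_n:=u_n+\Xi_n\in\mathcal H_{n+1}$. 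A stochastic Fubini argument turns the right-hand side of \eqref{variational_formula_discrete} into $\int_{\R^d} g_n v\,dx$, and the problem becomes: for $\mathbb P$-a.e.~$\omega$, find $u_{n+1}(\omega,\cdot)\in H^1(\R^d)$ with
\begin{align*}
\int_{\R^d}\!\Bigl(u_{n+1}v+\Delta t\,\eps\,\grad u_{n+1}\cdot\grad v-\Delta t\,\vec v(t_n,x)f(u_{n+1})\cdot\grad v\Bigr)dx=\int_{\R^d} g_n v\,dx,\ \ v\in H^1(\R^d).
\end{align*}

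Fix such an $\omega$ and define $T:L^2(\R^d)\to L^2(\R^d)$ by $T(w)=u$, where $u\in H^1(\R^d)$ is the unique Lax--Milgram solution of the linearised problem obtained by freezing $f(u_{n+1})$ at $f(w)$; coercivity of the bilinear form $(u,v)\mapsto\int uv+\Delta t\eps\int\grad u\cdot\grad v$ on $H^1(\R^d)$ together with the bound $|f(w)|\le c_f|w|\in L^2$ makes $T$ well defined. For $w_1,w_2\in L^2(\R^d)$, setting $u_i=T(w_i)$, $w=u_1-u_2$, and testing the equation for the difference with $v=w$ yields
\begin{align*}
\|w\|_{L^2}^2+\Delta t\,\eps\|\grad w\|_{L^2}^2=\Delta t\!\int\!\vec v(t_n,x)\bigl(f(w_1)-f(w_2)\bigr)\cdot\grad w\,dx.
\end{align*}
The right-hand side is bounded by $\Delta t\,V c_f\|w_1-w_2\|_{L^2}\|\grad w\|_{L^2}$, and Young's inequality with weight $\Delta t\eps$ gives it $\le\tfrac{\Delta t V^2 c_f^2}{2\eps}\|w_1-w_2\|_{L^2}^2+\tfrac{\Delta t\eps}{2}\|\grad w\|_{L^2}^2$. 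After absorbing the gradient term on the left I obtain
\begin{align*}
\|T(w_1)-T(w_2)\|_{L^2}^2\le\frac{\Delta t\,V^2c_f^2}{2\eps}\|w_1-w_2\|_{L^2}^2,
\end{align*}
and the hypothesis $\Delta t<2\eps/(V^2c_f^2)$ makes this a strict contraction. Banach's fixed-point theorem then gives the unique fixed point $u_{n+1}(\omega,\cdot)\in L^2(\R^d)$, which automatically lies in $H^1(\R^d)$ because $T$ takes values there.

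The remaining verifications are routine. Testing the original equation with $v=u_{n+1}$ and using $\Div\vec v=0$ to kill the flux contribution via $\int\vec v\cdot\grad F(u_{n+1})\,dx=0$ (where $F'=f$, $F(0)=0$) yields the energy bound $\|u_{n+1}\|_{L^2}^2+2\Delta t\eps\|\grad u_{n+1}\|_{L^2}^2\le\|g_n\|_{L^2}^2$; taking expectation shows $u_{n+1}\in\mathcal N_{n+1}$. The $\mathcal F_{t_{n+1}}$-measurability of $u_{n+1}$ is inherited from the Picard iterates $u^{(k+1)}=T(u^{(k)})$ started at $u^{(0)}=0$, since the Lax--Milgram solution depends continuously and measurably on the data $g_n$, which is itself $\mathcal F_{t_{n+1}}$-measurable. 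The main obstacle is really the contraction estimate: the Young split must be performed so that the full viscous term $\Delta t\eps\|\grad w\|^2$ absorbs only one half of the transport contribution, which is precisely what produces the threshold $2\eps/(V^2c_f^2)$ stated in the proposition; the uniqueness part of the conclusion then falls out of the same calculation applied to two putative solutions of \eqref{variational_formula_discrete}.
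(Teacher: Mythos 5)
Your proof is correct and follows essentially the same route as the paper: a Banach fixed-point argument for the map that freezes the nonlinearity $f$, with the linearized problem solved by Lax--Milgram and the contraction constant $\tfrac{\Delta t\,V^2c_f^2}{2\eps}$ obtained by exactly the same Young-inequality split. The only difference is that you run the contraction pathwise in $L^2(\R^d)$ and then recover $\mathcal{F}_{t_{n+1}}$-measurability from the Picard iterates, whereas the paper sets up the fixed point directly in $\mathcal{H}_{n+1}=L^2(\Omega;L^2(\R^d))$ (with the expectation inside the variational identity), so adaptedness is built in.
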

\begin{proof}
 Let us define a map 
 \begin{align*}
  T: &\mathcal{H}_{n+1} \mapsto  \mathcal{H}_{n+1} \\
  & S \mapsto u=T(S)
 \end{align*}
 via the variational problem in $\mathcal{N}_{n+1}$ : for all $v\in \mathcal{N}_{n+1} $
  \begin{align*}
    & \mathbb{E}\Big[\int_{\R^d} \Big((u-u_n)v + \Delta t \big\{ \eps \grad u \cdot \grad v -\vec{v}(t_n,x) f(S)\cdot \grad v\big\} \Big)\,dx \Big]\notag \\
    &= \mathbb{E}\Big[\int_{\R^d} \int_{t_n}^{t_{n+1}} \int_{\mathbf{E}} \eta(u_n;z)\,v\, \tilde{N}(dz,ds)\,dx\Big].
   \end{align*}
   Thanks to Lax-Milgram theorem, $T$ is a well-defined function. Moreover, for any $S_1, S_2 \in \mathcal{H}_{n+1}$, we see that 
   \begin{align*}
    & \mathbb{E}\Big[\int_{\R^d} |T(S_1)-T(S_2)|^2\,dx + \Delta t \eps \int_{\R^d} \big|\grad (T(S_1)-T(S_2))\big|^2\,dx \Big] \notag \\
    &=  \Delta t \mathbb{E}\Big[\int_{\R^d} \vec{v}(t_n,x) (f(S_1)-f(S_2))\cdot \grad (T(S_1)-T(S_2)) \,dx\Big]
   \end{align*}
   and hence, by Young's inequality and the assumptions \ref{A1} and \ref{A2}
    \begin{align*}
    & \mathbb{E}\Big[\int_{\R^d} |T(S_1)-T(S_2)|^2\,dx + \frac{\Delta t}{2} \eps \int_{\R^d} \big|\grad (T(S_1)-T(S_2))\big|^2\,dx \Big]\notag \\
    &\le  \frac{\Delta t}{2\,\eps} \mathbb{E}\Big[\int_{\R^d}   |\vec{v}(t_n,x)|^2 |f(S_1)-f(S_2)|^2 \,dx\Big] \le \frac{\Delta t\,V^2 c_f^2}{2\,\eps}
    \mathbb{E}\Big[\int_{\R^d} |S_1-S_2|^2\,dx\Big].
   \end{align*}
   Thus, if $\displaystyle \Delta t < \frac{2 \eps}{V^2 c_f^2}$, then $T$ is a contractive mapping in $\mathcal{H}_{n+1}$ which completes the proof. 
\end{proof}
\subsubsection{\textbf{\textit{A priori} estimate}}
 Note that, since $\mbox{div}_x \vec{v}(t,x)=0$ for all $(t,x)\in \Pi_T$, for any $\theta\in \mathcal{D}(\R^d)$, $\int_{\R^d} \vec{v}(t,x)f(\theta)\cdot \grad \theta\,dx=0$ and hence true for any $\theta\in H^1(\R^d)$ by density argument.
 We choose a test function $v= u_{n+1}$ in \eqref{variational_formula_discrete} and have
\begin{align}
 &\int_{\R^d} (u_{n+1}-u_n)u_{n+1}\,dx 
 + \eps\,\Delta t \int_{\R^d}|\grad u_{n+1}|^2\,dx
 = \int_{\R^d} \int_{t_n}^{t_{n+1}} \int_{\mathbf{E}} \eta(u_n;z) \, \tilde{N}(dz,ds)u_{n+1}\,dx \notag \\
 & \le \int_{\R^d} \int_{t_n}^{t_{n+1}} \int_{\mathbf{E}} \eta(u_n;z)u_n\, \tilde{N}(dz,ds)\,dx + \frac{\alpha}{2} ||u_{n+1}-u_n||_{L^2(\R^d)}^2 \notag \\
  & \qquad + \frac{1}{2\alpha} \int_{\R^d} \Big(\int_{t_n}^{t_{n+1}} \int_{\mathbf{E}} \eta(u_n;z)\, \tilde{N}(dz,ds)\Big)^2\,dx,\quad
  \text{for some}~~\alpha >0. \label{esti: discrete-0}
\end{align}
Therefore, thanks to the assumption \ref{A3}, and It\^{o}-L\'{e}vy isometry
\begin{align*}
 &\frac{1}{2} \Big[ ||u_{n+1}||_{\mathcal{H}}^2 + ||u_{n+1}-u_n||_{\mathcal{H}}^2 - ||u_n||_{\mathcal{H}}^2 \Big] +  \eps \,\Delta t ||\grad u_{n+1}||_{\mathcal{H}}^2 
 \le  \frac{\alpha}{2} ||u_{n+1}-u_n||_{\mathcal{H}}^2 + \frac{C\,\Delta t}{2\alpha} \big( 1+ ||u_n||_{\mathcal{H}}^2\big). 
\end{align*}
 Since $\alpha >0$ is arbitrary, one can choose $\alpha >0$ so that
\begin{align*}
  ||u_{n}||_{\mathcal{H}}^2 +  \sum_{k=0}^{n-1}||u_{k+1}-u_k||_{\mathcal{H}}^2  +  \eps \Delta t \sum_{k=0}^{n-1}
  ||\grad u_{k+1}||_{\mathcal{H}}^2  \le  C_1 + C_2\Delta t \sum_{k=0}^{n-1} ||u_{k}||_{\mathcal{H}}^2,
\end{align*}
for some constants $C_1, C_2 >0$. Hence an application of discrete Gronwall's lemma implies
\begin{align}
  ||u_{n}||_{\mathcal{H}}^2 +  \sum_{k=0}^{n-1}||u_{k+1}-u_k||_{\mathcal{H}}^2  +  \eps \Delta t \sum_{k=0}^{n-1}
  ||\grad u_{k+1}||_{\mathcal{H}}^2 \le  C. \label{a-prioriestimate:1}
\end{align}
For fixed $\Delta t = \frac{T}{N}$, we define 
\begin{equation*}
 u^{\Delta t} (t)= \sum_{k=1}^N u_k {\bf 1}_{[t_{k-1}, t_k )}(t); \quad 
  \tilde{u}^{\Delta t}(t)= \sum_{k=1}^N \Big[ \frac{u_k - u_{k-1}}{\Delta t}( t- t_{k-1}) + u_{k-1}\Big] 
  {\bf 1}_{[t_{k-1}, t_k )}(t)
 \end{equation*} 
 with $u^{\Delta t} (t)= u_0$ for $t<0$. Similarly, we define 
 \begin{align*}
 & \tilde{B}^{\Delta t}(t)= \sum_{k=1}^N \Big[ \frac{B_k - B_{k-1}}{\Delta t}( t- t_{k-1}) + B_{k-1}\Big] {\bf 1}_{[t_{k-1}, t_k )}(t), 
\end{align*}
where 
\begin{align*}
 B_n &= \sum_{k=0}^{n-1} \int_{t_k} ^{t_{k+1}} \int_{\mathbf{E}} \eta(u_k;z) \tilde{N}(dz,ds)
 = \int_0^{t_n} \int_{\mathbf{E}} \eta(u^{\Delta t}(s-\Delta t);z) \tilde{N}(dz,ds).
\end{align*}
A straightforward calculation shows that 
\begin{equation*}
 \begin{cases}
 \big\|u^{\Delta t}\big \|_{L^\infty(0,T;\mathcal{H})}= \underset{k=1,2,\cdots,N}\max\, \big\|u_k\big\|_{\mathcal{H}}; \quad
   \big\|\tilde{u}^{\Delta t}\big \|_{L^\infty(0,T;\mathcal{H})}= \underset{k=0,1,\cdots,N}\max\, \big\|u_k\big \|_{\mathcal{H}}, \\
   \big \| u^{\Delta t}-\tilde{u}^{\Delta t}\big\|_{L^2(0,T;\mathcal{H})}^2 \le \displaystyle \Delta t \sum_{k=0}^{N-1} \big \|u_{k+1}-u_k\big\|_{\mathcal{H}}^2.
  \end{cases}
\end{equation*}
In view of the above definitions and \textit{a priori} estimate \eqref{a-prioriestimate:1}, we have the following lemma.
\begin{lem} \label{lem: a-priori_bound_1}
Assume  that $\Delta t$ is small. Then  $u^{\Delta t},\, \tilde{u}^{\Delta t}$ are bounded sequences in $L^\infty(0,T;\mathcal{H})$;
 $\sqrt{\epsilon}u^{\Delta t}$ is a bounded sequence in $L^2(0,T;\mathcal{N})$ 
and $ || u^{\Delta t}-\tilde{u}^{\Delta t}||_{L^2(0,T;\mathcal{H})}^2
 \le C \Delta t$. Moreover, $u^{\Delta t}-u^{\Delta t}(\cdot -\Delta t)\goto 0$ in $L^2 (\Omega \times \Pi_T)$.
\end{lem}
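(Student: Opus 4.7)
All four assertions are direct bookkeeping from the uniform bound \eqref{a-prioriestimate:1}, namely
\[
\|u_n\|_{\mathcal{H}}^{2} + \sum_{k=0}^{n-1}\|u_{k+1}-u_k\|_{\mathcal{H}}^{2} + \eps\,\Delta t\sum_{k=0}^{n-1}\|\grad u_{k+1}\|_{\mathcal{H}}^{2}\le C,
\]
uniformly in $n\in\{0,\dots,N\}$ and (provided $\Delta t<\frac{2\eps}{V^{2}c_f^{2}}$) uniformly in $\Delta t$. I would first record that, by construction, $u^{\Delta t}(t)\in\{u_1,\dots,u_N\}$ and $\tilde u^{\Delta t}(t)$ is a convex combination of two consecutive iterates $u_{k-1},u_k$. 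Hence
\[
\|u^{\Delta t}\|_{L^{\infty}(0,T;\mathcal{H})}=\max_{1\le k\le N}\|u_k\|_{\mathcal{H}},\qquad \|\tilde u^{\Delta t}\|_{L^{\infty}(0,T;\mathcal{H})}\le\max_{0\le k\le N}\|u_k\|_{\mathcal{H}},
\]
both of which are controlled by $\sqrt{C}$ via \eqref{a-prioriestimate:1}. This gives the $L^{\infty}(0,T;\mathcal{H})$ bounds.

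\paragraph{The gradient bound.} For $\sqrt{\eps}\,u^{\Delta t}$ in $L^{2}(0,T;\mathcal{N})$ I would just expand the $\mathcal{N}$-norm using the definition of $u^{\Delta t}$:
\[
\eps\|u^{\Delta t}\|_{L^{2}(0,T;\mathcal{N})}^{2}=\eps\,\Delta t\sum_{k=1}^{N}\bigl(\|u_k\|_{\mathcal{H}}^{2}+\|\grad u_k\|_{\mathcal{H}}^{2}\bigr),
\]
and control the first sum by $T\cdot\sup_k\|u_k\|_{\mathcal{H}}^{2}\le TC$ and the second directly by \eqref{a-prioriestimate:1}.

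\paragraph{The two $L^{2}$-in-time distances.} On $[t_{k-1},t_k)$ one has $u^{\Delta t}(t)-\tilde u^{\Delta t}(t)=(u_k-u_{k-1})\bigl(1-\tfrac{t-t_{k-1}}{\Delta t}\bigr)$, so that
\[
\|u^{\Delta t}-\tilde u^{\Delta t}\|_{L^{2}(0,T;\mathcal{H})}^{2}=\sum_{k=1}^{N}\|u_k-u_{k-1}\|_{\mathcal{H}}^{2}\int_{t_{k-1}}^{t_k}\!\Bigl(1-\tfrac{t-t_{k-1}}{\Delta t}\Bigr)^{2}dt=\tfrac{\Delta t}{3}\sum_{k=1}^{N}\|u_k-u_{k-1}\|_{\mathcal{H}}^{2}\le \tfrac{C}{3}\Delta t.
\]
For the last assertion, since $u^{\Delta t}$ is piecewise constant with step $\Delta t$, one gets $u^{\Delta t}(t)-u^{\Delta t}(t-\Delta t)=u_k-u_{k-1}$ on $[t_{k-1},t_k)$ (using the convention $u^{\Delta t}\equiv u_0$ for $t<0$, so the $k=1$ slab contributes nothing once we restrict to $t\ge 0$). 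Therefore
\[
\|u^{\Delta t}-u^{\Delta t}(\cdot-\Delta t)\|_{L^{2}(\Omega\times\Pi_T)}^{2}=\Delta t\sum_{k=1}^{N}\|u_k-u_{k-1}\|_{\mathcal{H}}^{2}\le C\,\Delta t\longrightarrow 0.
\]

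\paragraph{Main obstacle.} There is no real analytical obstacle here once \eqref{a-prioriestimate:1} is in hand; the subtle point is only to make sure the interpolation conventions (treatment of $t<0$ in the shifted version, the convex-combination nature of $\tilde u^{\Delta t}$) are applied consistently so that every sum collapses onto the three controlled quantities $\sup_k\|u_k\|_{\mathcal{H}}^{2}$, $\sum_k\|u_{k+1}-u_k\|_{\mathcal{H}}^{2}$ and $\eps\,\Delta t\sum_k\|\grad u_{k+1}\|_{\mathcal{H}}^{2}$ of \eqref{a-prioriestimate:1}.
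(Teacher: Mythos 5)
Your proof is correct and follows exactly the route the paper intends: the paper states this lemma as an immediate consequence of the displayed identities for $\|u^{\Delta t}\|_{L^\infty(0,T;\mathcal{H})}$, $\|\tilde u^{\Delta t}\|_{L^\infty(0,T;\mathcal{H})}$ and $\|u^{\Delta t}-\tilde u^{\Delta t}\|_{L^2(0,T;\mathcal{H})}^2$ together with the a priori bound \eqref{a-prioriestimate:1}, and you simply carry out that bookkeeping explicitly (even sharpening the interpolation constant to $\Delta t/3$). The only slip is the parenthetical claim that the $k=1$ slab ``contributes nothing'' to $\|u^{\Delta t}-u^{\Delta t}(\cdot-\Delta t)\|^2$: with the convention $u^{\Delta t}\equiv u_0$ for $t<0$ it contributes $\Delta t\,\|u_1-u_0\|_{\mathcal{H}}^2$, which is harmless since your final sum already accounts for it.
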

Next, we want to find some upper bound for  $\tilde{B}^{\Delta t}(t)$. Regarding this, we have the following lemma.
\begin{lem}\label{lem: a-priori_bound_2}
$\tilde{B}^{\Delta t}$ is a bounded sequence in $L^2(\Omega \times \Pi_T)$ and 
\begin{align}
 \Big\| \tilde{B}^{\Delta t}(\cdot) - \int_{0}^\cdot \int_{\mathbf{E}}  \eta(u^{\Delta t}(s-\Delta t);z) 
 \tilde{N}(dz,ds)\Big\|_{L^2(\Omega \times \R^d)}^2 \le C \Delta t. \notag
\end{align}
\end{lem}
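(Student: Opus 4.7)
The strategy is to first bound $\tilde{B}^{\Delta t}$ in $L^2$ via the piecewise-linear interpolation structure together with the It\^{o}-L\'{e}vy isometry, and then derive the claimed rate by observing that the interpolation error on each cell is itself a stochastic integral of $\eta(u_{k-1};\cdot)$ over a sub-interval of length at most $\Delta t$.

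For the boundedness assertion, the first step is to note that on $[t_{k-1}, t_k)$ one has $\tilde{B}^{\Delta t}(t) = (1-\theta_k(t))B_{k-1} + \theta_k(t) B_k$ with $\theta_k(t) := (t-t_{k-1})/\Delta t \in [0,1)$, so that $|\tilde{B}^{\Delta t}(t)|^2 \le 2|B_{k-1}|^2 + 2|B_k|^2$. By the It\^{o}-L\'{e}vy isometry,
\[
\mathbb{E}\|B_n\|_{L^2(\R^d)}^2 = \int_0^{t_n}\!\int_{\mathbf{E}} \mathbb{E}\|\eta(u^{\Delta t}(s-\Delta t);z)\|_{L^2(\R^d)}^2\, m(dz)\,ds,
\]
and the Lipschitz estimate in \ref{A3} together with $\eta(0;z) = 0$ gives $\int_{\mathbf{E}}\|\eta(v;z)\|_{L^2(\R^d)}^2\, m(dz) \le (\lambda^*)^2 c_\eta \|v\|_{L^2(\R^d)}^2$. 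Combined with the uniform bound $\|u^{\Delta t}\|_{L^\infty(0,T;\mathcal{H})} \le C$ from Lemma \ref{lem: a-priori_bound_1}, this yields $\sup_n \mathbb{E}\|B_n\|_{L^2(\R^d)}^2 \le C$, so that $\tilde{B}^{\Delta t}$ is bounded in $L^\infty(0,T;L^2(\Omega\times\R^d))$ and a fortiori in $L^2(\Omega\times\Pi_T)$.

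For the rate estimate, I would set $I^{\Delta t}(t) := \int_0^t\!\int_{\mathbf{E}} \eta(u^{\Delta t}(s-\Delta t);z)\,\tilde{N}(dz,ds)$, so that $I^{\Delta t}(t_k) = B_k$. Writing $\tilde{B}^{\Delta t}(t) = \theta_k(t) I^{\Delta t}(t_k) + (1-\theta_k(t)) I^{\Delta t}(t_{k-1})$ and subtracting $I^{\Delta t}(t)$ on $[t_{k-1},t_k)$ gives
\[
\tilde{B}^{\Delta t}(t) - I^{\Delta t}(t) = \theta_k(t)\bigl(I^{\Delta t}(t_k) - I^{\Delta t}(t)\bigr) - \bigl(1 - \theta_k(t)\bigr)\bigl(I^{\Delta t}(t) - I^{\Delta t}(t_{k-1})\bigr).
\]
Each summand is a stochastic integral of $\eta(u_{k-1};z)$ over a sub-interval of $[t_{k-1},t_k)$ of length at most $\Delta t$, since for $s\in[t_{k-1},t_k)$ one has $u^{\Delta t}(s-\Delta t) = u_{k-1}$. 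Applying the It\^{o}-L\'{e}vy isometry and \ref{A3} yields $\mathbb{E}\|I^{\Delta t}(t) - I^{\Delta t}(t_{k-1})\|_{L^2(\R^d)}^2 \le (\lambda^*)^2 c_\eta \Delta t\, \mathbb{E}\|u_{k-1}\|_{L^2(\R^d)}^2$ and likewise for the complementary term. Invoking Lemma \ref{lem: a-priori_bound_1} once more produces the claimed bound of order $\Delta t$ uniformly in $t\in[0,T]$. No substantial obstacle is anticipated; the only point requiring care is the identification $u^{\Delta t}(s-\Delta t) = u_{k-1}$ on $[t_{k-1},t_k)$, which guarantees that the stochastic integrand is $\mathcal{F}_{t_{k-1}}$-measurable and piecewise constant on the grid so that the isometry applies cleanly to each sub-interval.
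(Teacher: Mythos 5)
Your proposal is correct and follows essentially the same route as the paper: boundedness via the convex-combination structure of $\tilde{B}^{\Delta t}$ together with the It\^{o}--L\'{e}vy isometry, assumption \ref{A3} and the uniform bound on $u^{\Delta t}$, and the rate estimate via the observation that on each cell the interpolation error is a combination of stochastic integrals of $\eta(u_{k-1};\cdot)$ over sub-intervals of length at most $\Delta t$ (your decomposition into $\theta_k(I(t_k)-I(t))-(1-\theta_k)(I(t)-I(t_{k-1}))$ is algebraically the same as the paper's $\frac{t-t_n}{\Delta t}(B_{n+1}-B_n)-\int_{t_n}^{t}$). No gaps.
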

\begin{proof} First we prove the boundedness of $\tilde{B}^{\Delta t}(t)$. By using the definition of $\tilde{B}^{\Delta t}(t)$,
the assumption \ref{A3}, and the boundedness of $u^{\Delta t}$ in  $L^\infty(0,T;\mathcal{H})$ along with It\^{o}-L\'{e}vy isometry, we obtain
\begin{align*}
 \big\| \tilde{B}^{\Delta t} \big\|_{L^2\big(0,T;L^2(\Omega,L^2(\R^d))\big)}^2 &\le \Delta t \sum_{k=0}^N ||B_k||_{L^2(\Omega \times \R^d)}^2 \notag \\
 & \le \Delta t \sum_{k=0}^N  \mathbb{E} \Big[ \int_{\R^d} \Big| \int_{0}^{t_k}\int_{\mathbf{E}}  \eta(u^{\Delta t}(s-\Delta t);z) \tilde{N}(dz,ds) \Big|^2\,dx\Big] \notag \\
 & \le C \Delta t \sum_{k=0}^N  E \Big[ \int_{\R^d} \int_{0}^{t_k} |u^{\Delta t}(s-\Delta t)|^2\,dx\,ds\Big]
  \le C \|u^{\Delta t}\|_{L^\infty(0,T;L^2(\Omega \times \R^d))} \le C.
\end{align*}
Thus,  $\tilde{B}^{\Delta t}$ is a bounded sequence in $L^2(\Omega \times \Pi_T)$.
\vspace{.1cm}

To prove second part of the lemma, we see that for any $t\in \big[t_n,t_{n+1}\big)$,
\begin{align*}
  &\tilde{B}^{\Delta t}(t)-  \int_{0}^t \int_{\mathbf{E}}  \eta(u^{\Delta t}(s-\Delta t);z) \tilde{N}(dz,ds) \notag \\
   =& \frac{t-t_n}{\Delta t} \int_{t_n}^{t_{n+1}} \int_{\mathbf{E}} \eta(u_n;z) \tilde{N}(dz,ds)
  - \int_{t_n}^{ t} \int_{\mathbf{E}} \eta(u_n;z) \tilde{N}(dz,ds).
\end{align*}
Therefore, in view of \eqref{a-prioriestimate:1} and the assumption \ref{A3}, we have
\begin{align*}
 &  \Big\| \tilde{B}^{\Delta t}(t) - \int_{0}^t \int_{\mathbf{E}}  \eta(u^{\Delta t}(s-\Delta t);z) \tilde{N}(dz,ds)\Big\|_{L^2(\Omega \times \R^d)}^2 \notag \\
   \le & 2  \int_{\R^d} \mathbb{E}\Bigg[ \Big( \frac{t-t_n}{\Delta t}\Big)^2  \int_{t_n}^{t_{n+1}} \int_{\mathbf{E}} \eta^2(u_n;z)\,m(dz)\,ds 
    + \int_{t_n}^{ t} \int_{\mathbf{E}} \eta^2(u_n;z)\,m(dz)\,ds\Bigg]dx   \\
    \le & C ||u_n||_{\mathcal{H}}^2 \Big[ \frac{(t-t_n)^2}{\Delta t} + (t-t_n)\Big] \le  C\,\Delta t.
\end{align*} 
This completes the proof.
\end{proof}
\subsubsection{\bf Convergence of $u^{\Delta t}(t,x)$}
Thanks to Lemma \ref{lem: a-priori_bound_1} and Lipschitz property of $f$ and $\eta$, there exist $u, f_u$ and $\eta_u$
such that (up to a subsequence) 
\begin{equation}\label{convergence:weak-1}
\left\{\begin{array}{lcl}
 u^{\Delta t}\rightharpoonup^* u & \text{in} &  L^\infty\big(0,T;L^2(\Omega \times \R^d)\big)  \\
u^{\Delta t}\rightharpoonup u & \text{in}&  L^2\big((0,T)\times\Omega; H^1(\R^d)\big)  \quad\text{ (for fixed $\eps >0$)} \\
  f(u^{\Delta t}) \rightharpoonup f_u & \text{in} &  L^2\big((0,T)\times\Omega; H^1(\R^d)\big)\\
  \eta(u^{\Delta t}(\cdot-\Delta t);\cdot) \rightharpoonup \eta_u & \text{in} &  L^2\big(\Omega \times \Pi_T \times \mathbf{E} \big). 
  \end{array}
  \right.
\end{equation} 
Let $ \displaystyle v^{\Delta t}(t)=\sum_{k=1}^N \vec{v}(t_k,\cdot){\bf 1}_{[t_{k-1}, t_k )}(t)$. Then, for any $\theta \in H^1(\R^d)$, we can rewrite \eqref{variational_formula_discrete},
in terms of $u^{\Delta t}, \tilde{u}^{\Delta t}, \tilde{B}^{\Delta t} $
and $v^{\Delta t}$ as 

\begin{align}
  \Big\langle \frac{\partial}{\partial t}\big(\tilde{u}^{\Delta t}-\tilde{B}^{\Delta t}\big)(t), \theta \Big\rangle + \int_{\R^d}
  \big\{ \eps \grad u^{\Delta t}(t) - v^{\Delta t}(t) f(u^{\Delta t}(t))\big\}\cdot \grad \theta\,dx =0.
  \label{variational_formula_discrete_1}
\end{align}

 In view of \eqref{variational_formula_discrete_1}, one needs to show the boundedness of
 $\frac{\partial}{\partial t}(\tilde{u}^{\Delta t}-\tilde{B}^{\Delta t}) $ in 
$ L^2(\Omega\times (0,T);H^{-1}(\R^d))$ and then identify the weak limit. Regarding this, we have the following lemma.
\begin{lem}\label{lem:weak-stochastic-term}
The sequence $\Big\{\frac{\partial}{\partial t}(\tilde{u}^{\Delta t}-\tilde{B}^{\Delta t})(t)\Big\}$ is  bounded in $L^2\big(\Omega\times (0,T);H^{-1}
 (\R^d)\big)$, and 
 \begin{align*}
 \frac{\partial}{\partial t}(\tilde{u}^{\Delta t}-\tilde{B}^{\Delta t}) \rightharpoonup
 \frac{\partial}{\partial t}\Big( u-\int_{0}^\cdot \int_{\mathbf{E}} \eta_u\tilde{N}(dz,ds)\Big)
 \quad \text{in}~~L^2\big(\Omega\times (0,T);H^{-1}(\R^d)\big)
 \end{align*}
 where $u$ is given by \eqref{convergence:weak-1}. 
\end{lem}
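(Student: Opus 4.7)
The key observation is that the variational identity~\eqref{variational_formula_discrete_1}, interpreted in $H^{-1}(\R^d)$, can be rewritten as
\begin{align*}
\frac{\partial}{\partial t}\bigl(\tilde{u}^{\Delta t}-\tilde{B}^{\Delta t}\bigr)(t) \;=\; -\,\Div\bigl\{\eps\,\grad u^{\Delta t}(t)-v^{\Delta t}(t)\,f(u^{\Delta t}(t))\bigr\}
\end{align*}
in $\mathcal{D}'(\R^d)$, pointwise in $(t,\omega)$. Thus, the first assertion will follow once I bound each flux in $L^2(\Omega\times(0,T);L^2(\R^d))^d$. By Lemma~\ref{lem: a-priori_bound_1}, $\sqrt{\eps}\,u^{\Delta t}$ is bounded in $L^2(0,T;\mathcal{N})$, so $\eps\,\grad u^{\Delta t}$ is bounded in $L^2\bigl(\Omega\times(0,T);L^2(\R^d)\bigr)^d$. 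For the convective part, $|v^{\Delta t}|\le V$ by~\ref{A2} and $|f(u^{\Delta t})|\le c_f|u^{\Delta t}|$ by~\ref{A1} together with $f(0)=0$; since $u^{\Delta t}$ is bounded in $L^\infty(0,T;\mathcal{H})$, this yields a bound on $v^{\Delta t}f(u^{\Delta t})$ in $L^2(\Omega\times\Pi_T)^d$. Taking the divergence then gives the required $L^2(\Omega\times(0,T);H^{-1}(\R^d))$ bound.

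\textbf{Identification of the weak limit.} Up to extraction, the bounded sequence $\partial_t(\tilde u^{\Delta t}-\tilde B^{\Delta t})$ admits a weak limit $\chi$ in $L^2(\Omega\times(0,T);H^{-1}(\R^d))$. Since weak convergence commutes with the time-derivative taken in the distribution sense, it suffices to identify the weak limits of $\tilde u^{\Delta t}$ and $\tilde B^{\Delta t}$ in $L^2(\Omega\times\Pi_T)$. For the first, Lemma~\ref{lem: a-priori_bound_1} gives $\|u^{\Delta t}-\tilde u^{\Delta t}\|_{L^2(0,T;\mathcal{H})}^2\le C\Delta t$, so combined with $u^{\Delta t}\rightharpoonup u$ from~\eqref{convergence:weak-1}, I obtain $\tilde u^{\Delta t}\rightharpoonup u$.

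For $\tilde B^{\Delta t}$, I would first use Lemma~\ref{lem: a-priori_bound_2}, which states
\begin{align*}
\Bigl\|\tilde B^{\Delta t}(\cdot)-\int_0^{\cdot}\!\!\int_{\mathbf{E}}\eta(u^{\Delta t}(s-\Delta t);z)\,\tilde N(dz,ds)\Bigr\|_{L^2(\Omega\times\R^d)}^2\le C\,\Delta t
\end{align*}
uniformly in $t$, to reduce the problem to identifying the weak limit of the stochastic integrals. Here I invoke the It\^o--L\'evy isometry: the map
\begin{align*}
\mathcal{I}:L^2\bigl(\Omega\times(0,T)\times\mathbf{E};L^2(\R^d)\bigr)\;\longrightarrow\;L^2\bigl(\Omega\times(0,T);L^2(\R^d)\bigr),\qquad g\mapsto \int_0^\cdot\!\!\int_{\mathbf{E}}g(s,z,\cdot)\,\tilde N(dz,ds)
\end{align*}
is a bounded linear operator (restricted to predictable integrands), hence weak-to-weak continuous. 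Since $\eta(u^{\Delta t}(\cdot-\Delta t);\cdot)\rightharpoonup\eta_u$ weakly in $L^2(\Omega\times\Pi_T\times\mathbf{E})$ by~\eqref{convergence:weak-1}, this yields
\begin{align*}
\int_0^\cdot\!\!\int_{\mathbf{E}}\eta(u^{\Delta t}(s-\Delta t);z)\,\tilde N(dz,ds)\;\rightharpoonup\;\int_0^\cdot\!\!\int_{\mathbf{E}}\eta_u\,\tilde N(dz,ds)
\end{align*}
in $L^2(\Omega\times\Pi_T)$. Combining with the Lemma~\ref{lem: a-priori_bound_2} strong convergence of the difference to zero, I conclude $\tilde B^{\Delta t}\rightharpoonup \int_0^{\cdot}\int_{\mathbf{E}}\eta_u\,\tilde N(dz,ds)$. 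Taking $\partial_t$ of the weak limit of $\tilde u^{\Delta t}-\tilde B^{\Delta t}$ gives the claimed formula.

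\textbf{Main obstacle.} The only delicate point is the weak-to-weak continuity of the It\^o--L\'evy integral operator $\mathcal{I}$ in the above step: one must verify that the predictability constraint is respected under the weak convergence (which it is, since the predictable class is closed under weak $L^2$ limits) and that the $L^2(\R^d)$-valued setting is covered by the standard isometry. Everything else reduces to the \emph{a priori} bounds of Lemmas~\ref{lem: a-priori_bound_1} and~\ref{lem: a-priori_bound_2} and standard functional analytic arguments.
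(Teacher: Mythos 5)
Your proposal is correct and follows essentially the same route as the paper: the $H^{-1}$ bound is obtained by testing the discrete variational identity against $\theta\in H^1(\R^d)$ and invoking the bounds of Lemma \ref{lem: a-priori_bound_1} (your divergence-form rewriting is the same estimate, up to a harmless sign --- the identity \eqref{variational_formula_discrete_1} gives $+\Div$ rather than $-\Div$), while the limit is identified exactly as in the paper by combining $\tilde u^{\Delta t}\rightharpoonup u$, Lemma \ref{lem: a-priori_bound_2}, and the weak-to-weak continuity of the norm-preserving It\^o--L\'evy integral operator on predictable integrands. The point you flag as the main obstacle is resolved as you suggest, and is how the paper argues as well.
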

 \begin{proof}
To prove the lemma, we use similar argumentation (cf.~ passage to the limit in $\mathcal{T}_2$) as in Section \ref{sec:proof-maintheorem}. Note that 
 It\^{o}-L\'{e}vy integral defines a linear operator from  $L^2\big((\Gamma, \mathcal{G}, \varsigma); \R\big)$ to 
$L^2\big((\Omega, \mathcal{F}_T); \R\big)$ and it preserves the norm (cf. for example \cite{peszat}). 
Therefore, in view of \eqref{convergence:weak-1} and  Lemma \ref{lem: a-priori_bound_2}, we have
 \begin{align*} 
  \tilde{B}^{\Delta t} \rightharpoonup \int_{0}^\cdot \int_{\mathbf{E}} \eta_u\tilde{N}(dz,ds) \quad \text{in}\,~~L^2(\Omega \times \Pi_T).
 \end{align*}
Again, note that
 \begin{align*}
 \frac{\partial}{\partial t}\big(\tilde{u}^{\Delta t}-\tilde{B}^{\Delta t}\big)(t)
 = \sum_{k=1}^N \frac{ (u_k -u_{k-1}) - (B_k -B_{k-1})}{\Delta t}{\bf 1}_{\big[t_{k-1}, t_k\big)}.
 \end{align*}
 From \eqref{variational_formula_discrete}, we see that for any $\theta\in H^1(\R^d)$,
 \begin{align*}
 & \int_{\R^d} \Big(\frac{u_{n+1}-u_n}{\Delta t} -\frac{1}{\Delta t} \int_{t_n}^{t_{n+1}} \int_{\mathbf{E}} \eta(u_n;z)
  \tilde{N}(dz,ds)\Big) \theta \,dx \notag \\
  &= -\eps \int_{\R^d} \grad u_{n+1}\cdot \grad \theta\,dx -\int_{\R^d} \vec{v}(t_n,\cdot) f(u_{n+1})\cdot \grad \theta\,dx \notag \\
  &\le  \Big\{ \eps \big\|\grad u_{n+1}\big\|_{L^2(\R^d)} +
  c_f V\,\big\|u_{n+1}\big\|_{L^2(\R^d)}\Big\} ||\theta||_{H^1(\R^d)},
 \end{align*} 
 and hence 
 \begin{align*}
  & \sup_{\theta \in H^1(\R^d)\setminus \{0\}} \frac{ \displaystyle \int_{\R^d} \Big( \frac{u_{n+1}-u_n}{\Delta t} -\frac{1}{\Delta t} \int_{t_n}^{t_{n+1}} \int_{\mathbf{E}} \eta(u_n;z)
  \tilde{N}(dz,ds)\Big)\theta \,dx }{ \displaystyle||\theta||_{H^1(\R^d)}} \notag \\
   & \hspace{2.5cm} \le  \eps \big\|\grad u_{n+1}\big\|_{L^2(\R^d)}
   + c_f V \big\|u_{n+1}\big\|_{L^2(\R^d)}.
 \end{align*}
 This implies that $ \displaystyle \frac{\partial}{\partial t}(\tilde{u}^{\Delta t}-\tilde{B}^{\Delta t})(t)$ is a bounded
 sequence in $L^2(\Omega\times (0,T);H^{-1}(\R^d))$.
 \vspace{.1cm}
 
 To prove the second part of the lemma, we recall that
  $ \displaystyle \tilde{B}^{\Delta t} \rightharpoonup \int_{0}^\cdot \int_{\mathbf{E}} \eta_u \tilde{N}(dz,ds)
  $  and $\tilde{u}^{\Delta t}  \rightharpoonup u $ in $L^2(\Omega \times \Pi_T)$. In view of the first part of this lemma, 
   one can conclude that, up to a subsequence 
 \begin{align*}
 \frac{\partial}{\partial t}(\tilde{u}^{\Delta t}-\tilde{B}^{\Delta t}) \rightharpoonup
 \frac{\partial}{\partial t}\Big( u-\int_{0}^\cdot \int_{\mathbf{E}} \eta_u \tilde{N}(dz,ds)\Big) \quad \text{in}~~L^2(\Omega\times (0,T);H^{-1}(\R^d)).
 \end{align*}
 This completes the proof.
 \end{proof}
 In view of \eqref{convergence:weak-1} and Lemma \ref{lem:weak-stochastic-term}, one can pass to the limit in \eqref{variational_formula_discrete_1} and has, for $\theta \in H^1(\R^d)$

 \begin{align*}
 \Big\langle \frac{\partial}{\partial t}\Big(u-\int_{0}^\cdot \int_{\mathbf{E}} \eta_u \tilde{N}(dz,ds)\Big), \theta \Big\rangle + \int_{\R^d}
  \big\{ \eps \grad u(t) - \vec{v}(t,\cdot) f_u\big\}\cdot \grad \theta \,dx =0.
\end{align*}
We denote by $\|\cdot\|_2$ the norm in $L^2(\R^d)$. An application of It\^{o}-L\'{e}vy formula \cite[similar to Theorem $3.4$]{tudor} to the functional $e^{-ct}\|u(t)\|_{2}^2$ yields
 \begin{align}
  & e^{-ct} \mathbb{E}\Big[ \|u(t)\|_2^2\Big] + 2 \eps \int_0^t  e^{-cs} \mathbb{E}\big[ \|\grad u(s)\|_2^2\big]\,ds
  - 2  \int_0^t  \mathbb{E}\Big[ \int_{\R^d} e^{-cs} \vec{v}(s,x)f_u \cdot \grad u\,dx \Big]\,ds \notag \\
  & = \mathbb{E}\big[ \|u_0\|_2^2\big] - c \int_0^t e^{-cs} \mathbb{E}\big[ \| u(s)\|_2^2\big]\,ds
  + \mathbb{E}\Big[\int_{\mathbf{E}} \int_0^t e^{-cs}\|\eta_u\|_2^2 \,ds\,m(dz)\Big].\label{esti:energy-1}
 \end{align}
By choosing $\alpha >0$ suitably in \eqref{esti: discrete-0} and multiplying by $e^{-ct_n}$ for positive $c>0$, we have 
\begin{align}
 & \mathbb{E}\Big[ \int_{\R^d} \Big( e^{-ct_n}|u_{n+1}|^2 - e^{-ct_{n-1}}|u_{n}|^2 \Big)\,dx \Big] + 2\eps\,\Delta t e^{-ct_n} \mathbb{E}\Big[\int_{\R^d} |\grad u_{n+1}|^2\,dx\Big] \notag \\
 & \le \Delta t e^{-ct_n} \mathbb{E}\Big[ \int_{\mathbf{E}} \int_{\R^d} \eta^2(u_n;z)\,dx\,m(dz)\Big] + \Big( e^{-ct_n}- e^{-ct_{n-1}} \Big)
 \mathbb{E}\Big[\int_{\R^d} | u_{n}|^2\,dx\Big]. \label{esti:energy-discrete-1}
\end{align}
Therefore, by summing over $n$ from $0$ to $k$ in \eqref{esti:energy-discrete-1} 
we get 
\begin{align*}
 & e^{-ct_k} \mathbb{E}\big[\|u_{k+1}\|_2^2\big] + 2\eps \sum_{n=0}^k \Delta t e^{-ct_n} \mathbb{E}\Big[ \|\grad u_{n+1}\|_2^2\Big] \notag \\
 & \le e^{c\Delta t} \mathbb{E}\big[ \|u_0\|_2^2\big] + \Delta t \sum_{n=0}^k  e^{-ct_n} \mathbb{E}\Big[ \int_{\mathbf{E}} \int_{\R^d} \eta^2(u_n;z)\,dx\,m(dz)\Big] + 
 \sum_{n=0}^k \Big( e^{-ct_n}- e^{-ct_{n-1}} \Big)\mathbb{E}\Big[\| u_{n}\|_2^2\Big].
\end{align*}
Note that 
\begin{align*}
 &\sum_{n=0}^k \Big( e^{-ct_n}- e^{-ct_{n-1}} \Big)\mathbb{E}\Big[\| u_{n}\|_2^2\Big] \\
 &= \big( 1- e^{c\Delta t}\big)\mathbb{E}\big[ \|u_0\|_2^2\big] + \sum_{n=1}^k \Big( e^{-ct_n}- e^{-ct_{n-1}} \Big)\mathbb{E}\Big[\| u_{n}\|_2^2\Big] \\
 &= \big( 1- e^{c\Delta t}\big)\mathbb{E}\big[ \|u_0\|_2^2\big] -c \sum_{n=1}^k \int_{t_{n-1}}^{t_n} e^{-cs}\,ds \mathbb{E}\Big[\| u_{n}\|_2^2\Big] \\
 & \le \big( 1- e^{c\Delta t}\big)\mathbb{E}\big[ \|u_0\|_2^2\big] - c e^{-c\Delta t} \int_0^{t_k} e^{-cs} \mathbb{E}\big[\| u^{\Delta t}(s)\|_2^2\big]\,ds,
\end{align*}
and 
\begin{align*}
& \Delta t \sum_{n=0}^k e^{-ct_n} \mathbb{E}\Big[ \int_{\mathbf{E}} \int_{\R^d} \eta^2(u_n;z)\,dx\,m(dz)\Big]
 \le \int_0^{t_{k}} e^{-cs}\mathbb{E} \Big[ \int_{\mathbf{E}} \int_{\R^d} \eta^2(u^{\Delta t};z)\,dx\,m(dz)\Big]\,ds.
\end{align*}
Thus, we obtain, for $t\in [t_k,t_{k+1})$
\begin{align}
& e^{-ct} \mathbb{E}\big[ \|u^{\Delta t}(t)\|_2^2\big] + 2 \eps \int_0^{t} e^{-cs} \mathbb{E}\big[ \|\grad u^{\Delta t}\|_2^2\big]\,ds  \notag \\
 & \le  \mathbb{E}\big[ \|u_0\|_2^2\big] + \int_0^{t} e^{-cs}\mathbb{E}\Big[ \int_{\mathbf{E}} \int_{\R^d} \eta^2(u^{\Delta t};z)\,dx\,m(dz)\Big]\,ds \notag \\
 & \hspace{2cm} -c e^{-c \Delta t} \int_0^{t} e^{-cs} \mathbb{E}\big[ \| u^{\Delta t}\|_2^2\big]\,ds.\label{esti:energy-discrete-3}
\end{align}
Note that, for any $\theta\in H^1(\R^d)$ and any $s\in [0,T]$, there holds $\displaystyle \int_{\R^d} \vec{v}(s,x) f(\theta)\grad \theta\,dx=0$. Thus, using \eqref{esti:energy-discrete-3} we obtain 
\begin{align}
 & e^{-ct} \mathbb{E}\big[ \|u^{\Delta t}(t)\|_2^2\big] + 2 \eps \int_0^{t} e^{-cs} \mathbb{E}\big[ \|\grad (u^{\Delta t}-u)\|_2^2\big]\,ds \notag \\
 & \hspace{1cm}- 2 \int_0^{t} e^{-cs} \mathbb{E}\big[\int_{\R^d}  \vec{v}(s,x)[f(u^{\Delta t})-f(u)]\grad (u^{\Delta t}-u)\,dx\big]\,ds \notag \\
 & \le \mathbb{E}\big[ \|u_0\|_2^2\big] + \int_0^{t} e^{-cs} \mathbb{E}\Big[ \int_{\mathbf{E}} \|\eta(u^{\Delta t};z)-\eta(u;z)\|^2\,m(dz)\Big]\,ds \notag \\
 & - \int_0^{t} e^{-cs}\mathbb{E}\Big[ \int_{\mathbf{E}} \|\eta(u;z)\|^2\,m(dz)\Big]\,ds + 2 \int_0^{t} e^{-cs}\mathbb{E}\Big[ \int_{\mathbf{E}} \int_{\R^d} \eta(u^{\Delta t};z)\eta(u;z)\,dx\,m(dz)\Big]\,ds \notag \\
 & -c e^{-c \Delta t} \int_0^{t} e^{-cs} \mathbb{E}\big[ \| u^{\Delta t}-u\|_2^2\big]\,ds + c e^{-c \Delta t} \int_0^{t} e^{-cs} \mathbb{E}\big[ \|u\|_2^2\big]\,ds 
 + 2 \eps \int_0^{t} e^{-cs} \mathbb{E}\big[ \|\grad u\|_2^2\big]\,ds \notag \\
 & -2c e^{-c \Delta t} \int_0^{t} e^{-cs} \mathbb{E}\big[  \int_{\R^d} u^{\Delta t}u\,dx\big]\,ds 
 +  2 \int_0^{t} e^{-cs} \mathbb{E}\big[\int_{\R^d} \vec{v}(s,x)f(u^{\Delta t})\grad u\,dx\big]\,ds \notag \\
 & + 2 \int_0^{t} e^{-cs} \mathbb{E}\big[\int_{\R^d} \vec{v}(s,x)f(u)\grad u^{\Delta t}\,dx\big]\,ds -4\eps \int_0^{t} e^{-cs} \mathbb{E}\big[  \int_{\R^d} \grad u^{\Delta t} \grad u\,dx\big]\,ds. 
 \label{esti:energy-discrete-4}
\end{align}
In view of Young's inequality, one has 
\begin{align}
 & - 2 \eps \int_0^{t} e^{-cs} \mathbb{E}\big[ \|\grad (u^{\Delta t}-u)\|_2^2\big]\,ds +  2 \int_0^{t} e^{-cs} \mathbb{E}\big[\int_{\R^d} \vec{v}(s,x)[f(u^{\Delta t})-f(u)]\grad (u^{\Delta t}-u)\,dx\big]\,ds \notag \\
 & \le -\eps \int_0^{t} e^{-cs} \mathbb{E}\big[ \|\grad (u^{\Delta t}-u)\|_2^2\big]\,ds
 + \frac{1}{\eps}\int_0^{t} e^{-cs} \mathbb{E}\big[\|\vec{v}(s,\cdot)[f(u^{\Delta t})-f(u)]\|_2^2\big]\,ds,\label{esti:***1}
\end{align}
and by choosing $c>0$ with $\frac{1}{\eps} V^2 C_f^2 + c_{\eta}\le c e^{-c \Delta t}$, one arrive at 
\begin{align}
 & \frac{1}{\eps}\int_0^{t} e^{-cs} \mathbb{E}\big[\|\vec{v}(s,\cdot)[f(u^{\Delta t})-f(u)]\|_2^2\big]\,ds + 
  \int_0^{t} e^{-cs}\mathbb{E} \Big[ \int_{\mathbf{E}} \|\eta(u^{\Delta t};z)-\eta(u;z)\|_2^2\,m(dz)\Big]\,ds \notag \\
  & \hspace{1cm}-c e^{-c \Delta t} \int_0^{t} e^{-cs} \mathbb{E}\big[ \| u^{\Delta t}-u\|_2^2\big]\,ds \le 0. \label{esti:***2}
\end{align}
We use \eqref{esti:***1}-\eqref{esti:***2} in \eqref{esti:energy-discrete-4} for the above choice of $c>0$ along with 
 \eqref{convergence:weak-1} and \eqref{esti:energy-1}  to have 
\begin{align*}
 & \limsup_{\Delta t} \int_0^T e^{-ct} \mathbb{E}\big[ \|u^{\Delta t}(t)\|_2^2\big]\,dt
 + \int_0^T\int_0^{t} e^{-cs}\mathbb{E}\Big[ \int_{\mathbf{E}} \| \eta_u- \eta(u;z)\|_2^2\,m(dz)\Big]\,ds\,dt\notag \\
 & \le \int_0^T e^{-ct} \mathbb{E}\big[ \|u(t)\|_2^2\big]\,dt.
\end{align*}
Thus, we obtain $\eta_u=\eta(u;z)$ and $u^{\Delta t}\rightarrow u$ in $L^2(\Omega \times \Pi_T)$. Moreover, one can show that $f_u=f(u)$. 
Thus $u$ is a weak solution to the viscous 
problem \eqref{eq:levy_con_laws-viscous}. Since it depends on $\eps>0$, we denote it by $u_\eps$. 
\subsubsection{ \bf \textit{A priori} bounds for viscous solutions}
Note that for fixed $\eps>0$, there exists a weak solution $u_\eps \in H^1(\R^d)$ satisfying: $\mathbb{P}$-a.s., and for a.e. $t\in (0,T)$
\begin{align}
& \Big\langle \frac{\partial}{\partial t} [u_\eps-\int_0^t \int_{\mathbf{E}} \eta(u_\eps(s,\cdot);z) \tilde{N}(dz,ds)],v \Big\rangle 
 + \int_{\R^d}\Big\{\vec{v}(t,x) f(u_\eps(t,x)) + \eps \grad u_\eps (t,x)\Big\} \cdot\grad v(x)\,dx=0,\label{eq:weak-viscous}
\end{align}
for any $v\in H^1(\R^d)$. We apply It\^{o}-L\'{e}vy formula to $\beta(u)=\|u\|_2^2$, and then take expectation. The result is 
\begin{align*}
  \mathbb{E}\Big[ \big\|u_\eps(t)\big\|_2^2\Big] + 2 \eps \int_0^t \mathbb{E}\Big[\big\|\grad u_\eps\big\|_2^2\Big]\,ds 
  \le  \mathbb{E}\Big[ \big\|u_\eps(0)\big\|_2^2  + C \int_0^t \mathbb{E}\Big[\big\|u_\eps(s)\big\|_2^2\Big]\,ds.
\end{align*}
An application of Gronwall's inequality yields
\begin{align*}
 \sup_{0\le t\le T}   \mathbb{E}\Big[\big\|u_\eps(t)\big\|_2^2\Big]  + \eps \int_0^T  \mathbb{E}\Big[\big\|\grad u_\eps(s)\big\|_2^2\Big]\,ds \le C.
\end{align*}

The following lemma states that $\displaystyle \frac{\partial}{\partial t} \Big[u_\eps-\int_0^t \int_{\mathbf{E}} \eta(u_\eps;z)
\tilde{N}(dz,ds)\Big] \in  L^2(\Omega\times \Pi_T) $ if the initial data $u_0^\eps \in H^1(\R^d)$.
\begin{lem}
Suppose that $u_0^\eps \in H^1(\R^d)$. Then, a weak solution $u_\eps$ of \eqref{eq:levy_con_laws-viscous} satisfies the following regularity properties:
 $ \frac{\partial}{\partial t} \big[u_\eps-\int_0^t \int_{\mathbf{E}} \eta(u_\eps;z) \tilde{N}(dz,ds)\big],\, \Delta u_\eps \in L^2(\Omega\times \Pi_T)$.
\end{lem}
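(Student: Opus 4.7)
The plan is to obtain the regularity by going back to the implicit time-discretization that produced $u_\epsilon$, derive an $H^2$-type \emph{a priori} bound at the discrete level by testing with $-\Delta u_{n+1}$, pass to the limit in $\Delta t\downarrow 0$, and finally read off the time derivative directly from the equation. Throughout, $\epsilon>0$ is fixed.

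First, I would take $v=-\Delta u_{n+1}$ as a test function in the variational identity \eqref{variational_formula_discrete}. Since $u_{n+1}\in H^1(\R^d)$ this is formal, and I would justify it by a mollification/Galerkin approximation (the Lax--Milgram step in the existence proof can be repeated in a Hilbert triple where $-\Delta$ is continuously invertible). Using integration by parts and the divergence-free condition $\mathrm{div}_x\vec v=0$, the deterministic elliptic part gives
\begin{align*}
\int_{\R^d}(u_{n+1}-u_n)(-\Delta u_{n+1})\,dx &= \tfrac12\big[\|\grad u_{n+1}\|_2^2-\|\grad u_n\|_2^2+\|\grad(u_{n+1}-u_n)\|_2^2\big],\\
\epsilon\Delta t\int_{\R^d}\grad u_{n+1}\cdot\grad(-\Delta u_{n+1})\,dx &= \epsilon\Delta t\,\|\Delta u_{n+1}\|_2^2,
\end{align*}
while the transport term, after integrating by parts once more and using $\mathrm{div}_x\vec v=0$, becomes $\Delta t\int \Delta u_{n+1}\,\vec v\,f'(u_{n+1})\cdot\grad u_{n+1}\,dx$, which is controlled by $\tfrac{\epsilon\Delta t}{2}\|\Delta u_{n+1}\|_2^2+\tfrac{\Delta t\,V^2 c_f^2}{2\epsilon}\|\grad u_{n+1}\|_2^2$ via Young's inequality and \ref{A1}--\ref{A2}.

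The delicate term is the noise contribution. Writing the right-hand side as $-\int (\grad u_{n+1})\cdot\grad\bigl(\int_{t_n}^{t_{n+1}}\!\!\int_\mathbf{E}\eta(u_n;z)\tilde N(dz,ds)\bigr)\,dx$ and then splitting $\grad u_{n+1}=\grad u_n+\grad(u_{n+1}-u_n)$, the $\grad u_n$-part vanishes in expectation (the stochastic integrand is $\mathcal F_{t_n}$-measurable), while the other part is treated by Cauchy--Schwarz and It\^o--L\'evy isometry:
\begin{align*}
\mathbb E\Bigl[\int\grad(u_{n+1}-u_n)\cdot\grad\!\!\int_{t_n}^{t_{n+1}}\!\!\int_\mathbf{E}\eta(u_n;z)\tilde N(dz,ds)\,dx\Bigr]
\le \tfrac12\mathbb E\|\grad(u_{n+1}-u_n)\|_2^2+\tfrac{(\lambda^*)^2 c_\eta\Delta t}{2}\mathbb E\|\grad u_n\|_2^2,
\end{align*}
where the last bound uses \ref{A3} (Lipschitz estimate on $\eta$ giving $|\partial_u\eta(u;z)|\le\lambda^* h_1(z)$ in the sense of distributions, rigorously handled by first convolving $\eta$ in $u$). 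The $\|\grad(u_{n+1}-u_n)\|_2^2$ terms then cancel after summation over $n$, and a discrete Gronwall lemma yields the uniform bounds
\begin{align*}
\max_{0\le n\le N}\mathbb E\|\grad u_n\|_2^2+\epsilon\sum_{n=0}^{N-1}\Delta t\,\mathbb E\|\Delta u_{n+1}\|_2^2\le C(T,\epsilon,u_0^\epsilon),
\end{align*}
where the constant depends on $\|u_0^\epsilon\|_{H^1}$ but not on $\Delta t$.

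Passing to the limit $\Delta t\to 0$ via the weak compactness already used in Lemmas~\ref{lem: a-priori_bound_1}--\ref{lem:weak-stochastic-term}, together with weak lower semicontinuity of the norms, gives $u_\epsilon\in L^\infty(0,T;L^2(\Omega;H^1(\R^d)))$ and $\Delta u_\epsilon\in L^2(\Omega\times\Pi_T)$. Finally, from the equation \eqref{eq:weak-viscous} rewritten in strong form,
\begin{align*}
\tfrac{\partial}{\partial t}\Bigl[u_\epsilon-\int_0^t\!\!\int_\mathbf{E}\eta(u_\epsilon;z)\tilde N(dz,ds)\Bigr]=\epsilon\Delta u_\epsilon-\mathrm{div}_x(\vec v f(u_\epsilon))=\epsilon\Delta u_\epsilon-\vec v\,f'(u_\epsilon)\cdot\grad u_\epsilon,
\end{align*}
where we again used $\mathrm{div}_x\vec v=0$. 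The first term on the right is in $L^2(\Omega\times\Pi_T)$ by the above, and the second is bounded by $Vc_f|\grad u_\epsilon|\in L^2(\Omega\times\Pi_T)$ by \ref{A1}--\ref{A2} and the $H^1$-bound, proving the claim.

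The main obstacle is the rigorous justification of taking $-\Delta u_{n+1}$ as a test function and of the martingale-splitting manoeuvre: the Galerkin/mollification argument has to be carried out in such a way that the splitting $\grad u_{n+1}=\grad u_n+\grad(u_{n+1}-u_n)$ and the It\^o--L\'evy isometry can be applied simultaneously. Once this is set up, the algebraic cancellation of the $\|\grad(u_{n+1}-u_n)\|_2^2$ terms delivers the $H^2$-type estimate without any further smallness assumption on $\Delta t$ beyond the one already imposed.
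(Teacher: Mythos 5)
Your argument is correct, but it runs in the opposite order from the paper's and uses a different test function, so it is worth recording the comparison. The paper tests the discrete identity \eqref{variational_formula_discrete} with $v=u_{n+1}-u_n-\int_{t_n}^{t_{n+1}}\int_{\mathbf{E}}\eta(u_n;z)\,\tilde{N}(dz,ds)$, which is an admissible element of $H^1(\R^d)$ requiring no extra justification; this yields a uniform bound on $\sum_n\Delta t\,\mathbb{E}\big[\big\|\Delta t^{-1}\big(u_{n+1}-u_n-\int_{t_n}^{t_{n+1}}\int_{\mathbf{E}}\eta(u_n;z)\tilde{N}(dz,ds)\big)\big\|_{L^2(\R^d)}^2\big]$ together with $\max_n\mathbb{E}\big[\|\grad u_n\|_2^2\big]$, so that after passing to the limit (second part of Lemma \ref{lem:weak-stochastic-term}) the time derivative $\frac{\partial}{\partial t}\big[u_\eps-\int_0^\cdot\int_{\mathbf{E}}\eta(u_\eps;z)\tilde{N}(dz,ds)\big]\in L^2(\Omega\times\Pi_T)$ is obtained \emph{first}, and $\Delta u_\eps\in L^2(\Omega\times\Pi_T)$ is then read off from the equation --- exactly the reverse of your order. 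You instead test with $-\Delta u_{n+1}$ and make $\eps\sum_n\Delta t\,\mathbb{E}\big[\|\Delta u_{n+1}\|_2^2\big]$ the primary estimate. The core technical step is identical in both proofs: the splitting $\grad u_{n+1}=\grad u_n+\grad(u_{n+1}-u_n)$, the vanishing in expectation of the $\grad u_n$ part against the martingale increment, the It\^{o}--L\'{e}vy isometry applied to $\grad\eta(u_n;z)$ via \ref{A3}, and the cancellation of the $\|\grad(u_{n+1}-u_n)\|_2^2$ terms before the discrete Gronwall lemma. Your only genuine gap --- the admissibility of $-\Delta u_{n+1}$ as a test function --- can be closed more cheaply than by the Galerkin scheme you sketch: since $u_{n+1}\in H^1(\R^d)$, the discrete equation gives $-\eps\,\Delta t\,\Delta u_{n+1}=u_n-u_{n+1}-\Delta t\,\vec{v}(t_n,\cdot)f'(u_{n+1})\cdot\grad u_{n+1}+\int_{t_n}^{t_{n+1}}\int_{\mathbf{E}}\eta(u_n;z)\tilde{N}(dz,ds)$ with an $L^2(\R^d)$ right-hand side (using \ref{A1}--\ref{A3}), so $u_{n+1}\in H^2(\R^d)$ a posteriori and the formal computation is legitimate. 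What the paper's choice buys is that the time-derivative bound, which is precisely the quantity controlled in Lemma \ref{lem:weak-stochastic-term} and used to identify limits, appears without ever invoking $H^2$ regularity of the iterates; what yours buys is a more transparent parabolic energy estimate. Both routes prove the lemma.
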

\begin{proof}
Let $u_0^\eps \in H^1(\R^d)$. By choosing $v= u_{n+1}-u_n - \int_{t_n}^{t_{n+1}}\int_{\mathbf{E}}\eta(u_n;z)\tilde{N}(dz,ds)$ in \eqref{variational_formula_discrete}, we obtain 
\begin{align*}
& \big\| u_{n+1}-u_n - \int_{t_n}^{t_{n+1}}\int_{\mathbf{E}}\eta(u_n;z)\tilde{N}(dz,ds)\big\|_{L^2(\R^d)}^2 \\
& \hspace{.2cm}+ \Delta t \eps \int_{\R^d} \grad u_{n+1}\cdot \grad\big[u_{n+1}-u_n - \int_{t_n}^{t_{n+1}}\int_{\mathbf{E}}\eta(u_n;z)\tilde{N}(dz,ds)\big]\,dx \\
&= -\Delta t \int_{\R^d} \vec{v}(t_n,x)f^{\prime}(u_{n+1})\cdot \grad u_{n+1}\Big( u_{n+1}-u_n - \int_{t_n}^{t_{n+1}}\int_{\mathbf{E}}\eta(u_n;z)\tilde{N}(dz,ds)\Big)\,dx \\
& \le \frac{1}{2} \big\| u_{n+1}-u_n - \int_{t_n}^{t_{n+1}}\int_{\mathbf{E}}\eta(u_n;z)\tilde{N}(dz,ds)\big\|_{L^2(\R^d)}^2 
+ \frac{1}{2} C(V,f^\prime) (\Delta t)^2 \|\grad u_{n+1}\|_{L^2(\R^d)^d}^2.
\end{align*}
Note that, $\mathbb{E}\Big[\grad u_n \int_{t_n}^{t_{n+1}}\int_{\mathbf{E}} \grad\eta(u_n;z)\tilde{N}(dz,ds)\Big]=0$. Since $\tilde{N}$ is a compensated Poisson random measure, an application of differentiation under integral sign, the assumption \ref{A3} along with Young's inequality and It\^{o}-L\'{e}vy isometry reveals that 
\begin{align*}
& \mathbb{E}\Big[  \int_{\R^d} \grad u_{n+1}\cdot \grad\big[u_{n+1}-u_n - \int_{t_n}^{t_{n+1}}\int_{\mathbf{E}}\eta(u_n;z)\tilde{N}(dz,ds)\big]\,dx\Big] \\
&= \frac{1}{2}\mathbb{E}\Big[ \|\grad u_{n+1}\|_{L^2(\R^d)^d}^2- \|\grad u_{n}\|_{L^2(\R^d)^d}^2 + \|\grad[u_{n+1}-u_n]\|_{L^2(\R^d)^d}^2\Big] \\
& \hspace{1cm}- \mathbb{E} \Big[ \int_{\R^d} \grad[u_{n+1}-u_n]\int_{t_n}^{t_{n+1}}\int_{\mathbf{E}} \grad \eta(u_n;z)\tilde{N}(dz,ds)\,dx\Big] \\
& \ge \frac{1}{2}\mathbb{E}\Big[ \|\grad u_{n+1}\|_{L^2(\R^d)^d}^2- \|\grad u_{n}\|_{L^2(\R^d)^d}^2 + \frac{1}{2} \|\grad[u_{n+1}-u_n]\|_{L^2(\R^d)^d}^2 
-2 \lambda^* \Delta t \|\grad u_{n}\|_{L^2(\R^d)^d}^2 \int_{\mathbf{E}}h_1^2(z)\,m(dz) \Big],
\end{align*}
and hence
\begin{align*}
& \mathbb{E}\Big[ \big\|u_{n+1}-u_n-\int_{t_n}^{t_{n+1}}\int_{\mathbf{E}}\eta(u_n;z)\tilde{N}(dz,ds)\big\|_{L^2(\R^d)}^2\Big] \\
& + \Delta t \eps \,\mathbb{E}\Big[ \|\grad u_{n+1}\|_{L^2(\R^d)^d}^2- \|\grad u_{n}\|_{L^2(\R^d)^d}^2 + \frac{1}{2} \|\grad[u_{n+1}-u_n]\|_{L^2(\R^d)^d}^2 \Big] \\
& \le 2 \lambda^* (\Delta t)^2 c_{\eta}\mathbb{E}\Big[ \|\grad u_{n}\|_{L^2(\R^d)^d}^2\Big]
+ C(V,f^\prime) (\Delta t)^2 \mathbb{E}\Big[\|\grad u_{n+1}\|_{L^2(\R^d)^d}^2\Big].
\end{align*}
Thus, for any $k\in \{0,1,\cdots, N-1\}$, 
\begin{align*}
& \sum_{n=0}^k \Delta t \mathbb{E}\Big[ \big\| \frac{u_{n+1}-u_n-\int_{t_n}^{t_{n+1}}\int_{\mathbf{E}}\eta(u_n;z)\tilde{N}(dz,ds)}{\Delta t}\big\|_{L^2(\R^d)}^2\Big] \\
& + \eps \mathbb{E}\Big[ \|\grad u_{k+1}\|_{L^2(\R^d)^d}^2\Big] + 
\frac{\eps}{2} \sum_{n=0}^{k}\mathbb{E}\Big[\|\grad[u_{n+1}-u_n]\|_{L^2(\R^d)^d}^2 \Big] 
\\
& \le  \eps \mathbb{E}\Big[ \|\grad u_{0}^\eps\|_{L^2(\R^d)^d}^2\Big] + 
C(V,f^\prime, c_{\eta},\lambda^*) \Delta t \sum_{n=0}^{k+1} \mathbb{E}\Big[\|\grad u_n\|_{L^2(\R^d)^d}^2 \Big] \le C.
\end{align*}
Therefore, in view of the definitions of $u^{\Delta t},~\tilde{u}^{\Delta t},~ \tilde{B}^{\Delta t}$, we see that $u^{\Delta t},~\tilde{u}^{\Delta t}$ are bounded in \\
$L^\infty(0,T; L^2(\Omega;H^1(\R^d)))$, and the sequence $\Big\{\frac{\partial}{\partial t}(\tilde{u}^{\Delta t}-\tilde{B}^{\Delta t})(t)\Big\}$ is bounded in $L^2\big(\Omega\times (0,T); L^2(\R^d)\big)$. Moreover, second part of Lemma \ref{lem:weak-stochastic-term} reveals that $\displaystyle \frac{\partial}{\partial t} \Big[u_\eps-\int_0^t \int_{\mathbf{E}} \eta(u_\eps;z)
\tilde{N}(dz,ds)\Big]\in L^2(\Omega\times \Pi_T)$ and hence by using the equation \eqref{eq:levy_con_laws-viscous} we arrive at the conclusion that
$\Delta u_\eps \in L^2(\Omega\times \Pi_T)$. Furthermore, \eqref{eq:weak-viscous} holds with an integral over $\R^d$ instead of the duality bracket if the initial data $u_0^\eps \in H^1(\R^d)$. 
\end{proof}
 In addition, if $u_0^\eps\in L^{2p}(\R^d),\, p\ge 1$, then a straightforward argumentation
as in the proof of \cite[Proposition $A.5$]{BaVaWit} gives 
$u_\eps \in L^\infty \big(0,T; L^{2p}(\Omega \times \R^d)\big)$.
\vspace{.1cm}

The achieved results can be summarized into the following theorem.
\begin{thm}
 Let $\eps>0$ is fixed and $u_0^\eps \in H^1(\R^d)$. Then there exists a weak solution $u_\eps$ of \eqref{eq:levy_con_laws-viscous} such that 
 $\displaystyle \frac{\partial}{\partial t} \Big[u_\eps-\int_0^t \int_{\mathbf{E}} \eta(u_\eps;z) \tilde{N}(dz,ds)\Big],\, \Delta u_\eps \in L^2(\Omega\times \Pi_T)$.
  Moreover the following estimate holds:
  \begin{align*}
   \sup_{0\le t\le T} \mathbb{E}\Big[\big\|u_\eps(t)\big\|_2^2\Big]  + \eps \int_0^T \mathbb{E}\Big[\big\|\grad u_\eps(s)\big\|_2^2\Big]\,ds \le C.
  \end{align*} 
Furthermore, if $u_0^\eps\in L^{2p}(\R^d),\, p\ge 1$, then $u_\eps \in L^\infty \big(0,T; L^{2p}(\Omega \times \R^d)\big)$.
 \end{thm}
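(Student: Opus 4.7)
The strategy is an implicit time discretization followed by passage to the limit in the weak formulation. First I would fix $\Delta t = T/N$ small enough (specifically $\Delta t < 2\eps/(V^2 c_f^2)$) and construct $u_{n+1}\in\mathcal{N}_{n+1}$ from $u_n\in\mathcal{H}_n$ via the variational equation \eqref{variational_formula_discrete}; existence and uniqueness of $u_{n+1}$ follows from the fixed-point / Lax--Milgram argument already carried out. Then, testing \eqref{variational_formula_discrete} with $v = u_{n+1}$, using $\int_{\R^d}\vec{v}(t_n,\cdot)f(u_{n+1})\cdot\grad u_{n+1}\,dx = 0$ (by $\Div\vec{v} = 0$), It\^o--L\'evy isometry and the linear growth of $\eta$ coming from \ref{A3}, a discrete Gronwall inequality produces the uniform bound \eqref{a-prioriestimate:1}. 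This furnishes the boundedness asserted in Lemma~\ref{lem: a-priori_bound_1} for the piecewise-constant interpolant $u^{\Delta t}$, its piecewise-linear version $\tilde u^{\Delta t}$, and the martingale interpolant $\tilde B^{\Delta t}$ (Lemma~\ref{lem: a-priori_bound_2}).

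Next I would extract the weak limits displayed in \eqref{convergence:weak-1}, and rewrite the discrete scheme in the form \eqref{variational_formula_discrete_1}. Using Lemma~\ref{lem:weak-stochastic-term}, which identifies the weak limit of $\partial_t(\tilde u^{\Delta t}-\tilde B^{\Delta t})$ through the isometry provided by the stochastic integral against $\tilde N$, I pass to the limit to obtain a candidate weak solution $u_\eps$ satisfying the equation \eqref{eq:weak-viscous} with $f_u$ in place of $f(u_\eps)$ and $\eta_u$ in place of $\eta(u_\eps;\cdot)$. The identification $f_u = f(u_\eps)$ and $\eta_u = \eta(u_\eps;z)$ is the main technical step: I apply It\^o--L\'evy to $e^{-ct}\|u_\eps(t)\|_2^2$, derive the analogous discrete energy identity from \eqref{esti: discrete-0}, subtract them and split off the cross terms $\vec v\cdot[f(u^{\Delta t})-f(u_\eps)]\cdot\grad(u^{\Delta t}-u_\eps)$ and $\int_{\mathbf E}\|\eta(u^{\Delta t};z)-\eta(u_\eps;z)\|^2\,m(dz)$. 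Choosing $c>0$ with $\frac{1}{\eps}V^2 c_f^2 + c_\eta \le c e^{-c\Delta t}$ and using Young's inequality to absorb the drift-difference into the gradient term (possible because of the viscous dissipation), one deduces $u^{\Delta t}\to u_\eps$ strongly in $L^2(\Omega\times\Pi_T)$, hence $f_u = f(u_\eps)$ and $\eta_u = \eta(u_\eps;\cdot)$. The uniform moment estimate then follows directly from the energy identity plus Gronwall.

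For the extra regularity when $u_0^\eps\in H^1(\R^d)$, the plan is to test \eqref{variational_formula_discrete} with the ``martingale-shifted increment''
\[
v = u_{n+1}-u_n-\int_{t_n}^{t_{n+1}}\!\int_{\mathbf{E}}\eta(u_n;z)\,\tilde N(dz,ds).
\]
This choice cancels the stochastic piece and leaves a quadratic control on the deterministic increment; taking expectations, using It\^o--L\'evy isometry on the $\grad\eta$-cross term and \ref{A3} to absorb $\lambda^*$-terms against the dissipation $\eps\|\grad u_{n+1}\|_2^2$, one obtains, after summing in $n$, that $\partial_t(\tilde u^{\Delta t}-\tilde B^{\Delta t})$ is bounded in $L^2(\Omega\times(0,T);L^2(\R^d))$ and $\grad u^{\Delta t}$ is bounded in $L^\infty(0,T;L^2(\Omega\times\R^d))$. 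Passing to the limit via Lemma~\ref{lem:weak-stochastic-term} yields $\partial_t[u_\eps - \int_0^\cdot\!\int_{\mathbf E}\eta(u_\eps;z)\tilde N(dz,ds)]\in L^2(\Omega\times\Pi_T)$, whereupon integration by parts in \eqref{eq:weak-viscous} produces $\Delta u_\eps \in L^2(\Omega\times\Pi_T)$ directly from the PDE.

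The final claim on $L^{2p}$-moments, for $u_0^\eps\in L^{2p}(\R^d)$ with $p\ge 1$, is proved by applying the It\^o--L\'evy formula to $\beta_p(r) = |r|^{2p}$, exploiting $\Div\vec v = 0$ (so the drift term vanishes), the sign of the viscous term, and the Taylor-remainder form of the jump correction controlled by \ref{A3}; a Gronwall argument then gives the bound. The main obstacle I anticipate is the identification of the nonlinear limits $f_u$ and $\eta_u$: both require strong convergence of $u^{\Delta t}$, which is not immediate from weak compactness alone, and the energy-method argument above is delicate because one must choose $c$ large enough to absorb the difference-of-drifts, yet use the $\eps$-dissipation in a way that does not degenerate as $\Delta t\to 0$.
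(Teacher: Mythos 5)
Your proposal is correct and follows essentially the same route as the paper: implicit time discretization with the contraction/Lax--Milgram construction, the discrete energy estimate and Gronwall argument, weak compactness of the interpolants, identification of the nonlinear limits $f_u$ and $\eta_u$ via the weighted energy functional $e^{-ct}\|u(t)\|_2^2$ with the choice $\frac{1}{\eps}V^2c_f^2+c_\eta\le ce^{-c\Delta t}$, the extra $H^1$-regularity obtained by testing with the martingale-shifted increment, and the $L^{2p}$ bound via It\^o--L\'evy applied to $|r|^{2p}$ (which the paper delegates to a cited proposition). No essential differences.
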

\subsection{Proof of Theorem \ref{thm:existence}}
In this subsection, we prove existence of generalized entropy solution in the sense of Definition \ref{defi:generality entropy solution}. For this, fix a nonnegative test
function $ \psi\in C_c^\infty([0, \infty)\times \R^d)$, $B\in \mathcal{F}_T$ and convex entropy flux pair $(\beta,\zeta)$. For any $\eps>0$, we consider the 
viscous problem \eqref{eq:levy_con_laws-viscous} with initial data $u_0^\eps \in \mathcal{D}(\R^d)$. We apply It\^{o}-L\'{e}vy formula to the functional
$F(t,u_\eps)= \int_{\R^d} \beta(u_\eps)\psi(t,x)\,dx$ and conclude
 \begin{align}
   0 \le  & \mathbb{E} \Big[ {\bf 1}_{B} \int_{\R^d} \beta(u_0^\eps(x))\psi(0,x)\,dx\Big] 
   -\eps \mathbb{E}  \Big[{\bf 1}_{B}\int_{\Pi_T}\beta^\prime(u_\eps(t,x))\grad u_\eps(t,x) \cdot \grad \psi(t,x)\,dx\,dt\Big]\notag \\
 +&  \mathbb{E}  \Big[ {\bf 1}_{B}  \int_{\Pi_T} \Big(\beta(u_\eps(t,x)) \partial_t\psi(t,x)
  + \grad \psi(t,x)\cdot \vec{v}(t,x) \zeta(u_\eps(t,x)) \Big)\,dx\,dt\Big]\notag \\
 + &  \mathbb{E}  \Big[ {\bf 1}_{B} \int_{\Pi_T} \int_{\mathbf{E}} \int_0^1 \eta(u_\eps(t,x);z)\beta^\prime (u_\eps(t,x) + \theta\,\eta(u_\eps(t,x);z))
 \psi(t,x)\,d\theta\,\tilde{N}(dz,dt)\,dx \Big] \notag \\
+&  \mathbb{E}  \Big[ {\bf 1}_{B} \int_{\Pi_T} \int_{\mathbf{E}} \int_0^1  (1-\theta)\eta^2(u_\eps(t,x);z)\beta^{\prime\prime}(u_\eps(t,x)
+ \theta\,\eta(u_\eps(t,x);z))
 \psi(t,x)\,d\theta\,m(dz)\,dt\,dx \Big] \label{viscous-measure-inequality}
\end{align}
We use Young measure technique (cf.~Subsection \ref{subsec:proof-main-thm}) to pass to the limit in \eqref{viscous-measure-inequality}
as $\eps \goto 0$. Moreover, there exists a $L^2(\R^d\times(0,1))$-valued 
 predictable limit process $u\in L^\infty\big(0,T; L^2(\Omega\times\R^d\times(0,1))\big)$ such that
\begin{align}
    &  \mathbb{E}\Big[ {\bf 1}_{B}  \int_{\Pi_T}\int_0^1 \Big(\beta(u(t,x,\alpha)) \partial_t\psi(t,x)
  + \grad \psi(t,x)\cdot \vec{v}(t,x) F^\beta(u(t,x,\alpha)) \Big)\,d\alpha\,dx\,dt\Big]\notag \\
 + &  \mathbb{E} \Big[ {\bf 1}_{B} \int_{\Pi_T}\int_{\mathbf{E}} \int_0^1\int_0^1 \eta(u(t,x,\alpha);z)\beta^\prime (u(t,x,\alpha)
 + \theta\,\eta(u(t,x,\alpha);z))\psi(t,x)\,d\alpha\,d\theta\,\tilde{N}(dz,dt)\,dx \Big] \notag \\
+&  \mathbb{E} \Big[ {\bf 1}_{B} \int_{\Pi_T} \int_{\mathbf{E}} \int_0^1 \int_0^1 (1-\theta)\eta^2(u(t,x,\alpha);z)\beta^{\prime\prime}(u(t,x,\alpha)
+ \theta\,\eta(u(t,x,\alpha);z))\psi(t,x)\,d\alpha\,d\theta\,m(dz)\,dt\,dx \Big] \notag \\
& \hspace{4cm}+ \mathbb{E}\Big[ {\bf 1}_{B} \int_{\R^d} \beta(u_0(x))\psi(0,x)\,dx\Big] \ge 0.\label{viscous-measure-inequality-1}
\end{align}
Since \eqref{viscous-measure-inequality-1} holds for every $B\in \mathcal{F}_T$, we conclude that $\mathbb{P}$-a.s., inequality \eqref{eq: generalised entropy inequality} holds true as well. In other words,
$u(t,x,\alpha)$ is a generalized entropy solution to the problem \eqref{eq:levy_con_laws}. 
\subsection{Proof of Theorem \ref{thm:uniqueness}}
To prove uniqueness of generalized entropy solutions, we follow the same argumentations as in \cite{BisKarlMaj}. 
Let $\rho$ and $\varrho$ be the standard nonnegative 
mollifiers on $\R$ and $\R^d$ respectively such that 
$\supp(\rho) \subset [-1,0]$ and $\supp(\varrho) = B_1(0)$, where $B_1(0)$ denotes the bounded ball of radius $1$ around $0$ in $\R^d$.  
We define  $\rho_{\delta_0}(r) = \frac{1}{\delta_0}\rho(\frac{r}{\delta_0})$ 
and $\varrho_{\delta}(x) = \frac{1}{\delta^d}\varrho(\frac{x}{\delta})$, 
where $\delta$ and $\delta_0$ are two positive constants. Given a  nonnegative test 
function $\psi\in C_c^{1,2}([0,\infty)\times \rd)$ and two 
positive constants $\delta$ and $ \delta_0 $, we define 
\begin{align}
\phi_{\delta,\delta_0}(t,x, s,y) = \rho_{\delta_0}(t-s) 
\varrho_{\delta}(x-y) \psi(s,y).\notag
\end{align}
Let $\beta:\R \rightarrow \R$ be a $C^\infty$ function satisfying 
\begin{align*}
      \beta(0) = 0,\quad \beta(-r)= \beta(r),\quad 
      \beta^\prime(-r) = -\beta^\prime(r),\quad \beta^{\prime\prime} \ge 0,
\end{align*} 
and 
\begin{align*}
	\beta^\prime(r)=\begin{cases} -1\quad \text{when} ~ r\le -1,\\
                               \in [-1,1] \quad\text{when}~ |r|<1,\\
                               +1 \quad \text{when} ~ r\ge 1.
                 \end{cases}
\end{align*} 
For any $\vartheta > 0$, define $\beta_\vartheta:\R \rightarrow \R$ by 
$\beta_\vartheta(r) = \vartheta \beta(\frac{r}{\vartheta})$. 
Then
\begin{align*}
 |r|-M_1\vartheta \le \beta_\vartheta(r) \le |r|\quad 
 \text{and} \quad |\beta_\vartheta^{\prime\prime}(r)| 
 \le \frac{M_2}{\vartheta} {\bf 1}_{|r|\le \vartheta},
\end{align*} 
where $M_1 = \sup_{|r|\le 1}\big | |r|-\beta(r)\big |$ and 
$M_2 = \sup_{|r|\le 1}|\beta^{\prime\prime} (r)|$. For $\beta= \beta_\vartheta$~ we define 
\begin{align*}
\mathcal{F}^{\beta_{\vartheta}}(a,b)=\int_{b}^a \beta_{\vartheta}^\prime(\sigma-b)f^\prime(\sigma)\,d(\sigma). 
\end{align*} 
Let $v(t,x,\alpha)$ be a generalized entropy 
solution of \eqref{eq:levy_con_laws}. Moreover, let $\varsigma$ be the standard symmetric 
nonnegative mollifier on $\R$ with support in $[-1,1]$ 
and $\varsigma_l(r)= \frac{1}{l} \varsigma(\frac{r}{l})$ 
for $l > 0$. Given $k\in \R$, the function 
$\beta_{\vartheta}(\cdot-k)$ is a smooth convex function 
and $(\beta_{\vartheta}(\cdot-k), \mathcal{F}^{\beta_{\vartheta}}(\cdot, k))$ is a 
convex entropy pair. Consider the entropy inequality 
for $v(t,x,\alpha)$, based on the 
entropy pair $(\beta_{\vartheta}(\cdot-k), \mathcal{F}^{\beta_{\vartheta}}(\cdot, k))$, and 
then multiply by $\varsigma_l(u_\eps(s,y)-k)$, integrate with 
respect to $ s, y, k$ and take the expectation. The result is
\begin{align}
0\le  & \mathbb{E} \Big[\int_{\Pi_T}\int_{\R^d}\int_{\R} \beta_{\vartheta}(v_0(x)-k)
\phi_{\delta,\delta_0}(0,x,s,y) \varsigma_l(u_\eps(s,y)-k)\,dk \,dx\,dy\,ds\Big] \notag \\
&  + \mathbb{E}  \Big[\int_{\Pi_T^2} \int_{0}^1 \int_{\R} \beta_{\vartheta}(v(t,x,\alpha)-k)\partial_t \phi_{\delta,\delta_0}(t,x,s,y)
\varsigma_l(u_\eps(s,y)-k)\,dk\, d\alpha \,dx\,dt\,dy\,ds \Big]\notag \\ 
& + \mathbb{E}  \Big[ \int_{\Pi_T^2}\int_{\R}\int_{\mathbf{E}} 
\int_{0}^1\Big(\beta_{\vartheta} \big(v(t,x,\alpha) 
+\eta(v(t,x,\alpha);z)-k\big) -\beta_{\vartheta}(v(t,x,\alpha)-k)\Big) \notag\\
&\hspace{4cm} \times \phi_{\delta,\delta_0}\,dx\,d\alpha \,\tilde{N}(dz,dt) \varsigma_l(u_\eps(s,y)-k)\,dk \,dy\,ds \Big] \notag\\
& + \mathbb{E}  \Big[\int_{\Pi_T^2}\int_{\mathbf{E}}
\int_{\R} \int_{0}^1 \Big(\beta_{\vartheta} \big(v(t,x,\alpha) +\eta(v(t,x,\alpha);z)-k\big) -\beta_{\vartheta}(v(t,x,\alpha)-k) \notag \\
 & \hspace{3cm}-\eta(v(t,x,\alpha);z) \beta_{\vartheta}^{\prime}(v(t,x,\alpha)-k)\Big)\phi_{\delta,\delta_0}\varsigma_l(u_\eps(s,y)-k)\,d\alpha\,dk\,dx\,m(dz)\,dt\,dy\,ds\Big]\notag \\
& + \mathbb{E} \Big[\int_{\Pi_T^2}\int_{0}^1 \int_{\R} \mathcal{F}^{\beta_{\vartheta}}(v(t,x,\alpha),k)\vec{v}(t,x) \cdot \grad_x\phi_{\delta,\delta_0}(t,x,s,y)
\varsigma_l(u_\eps(s,y)-k)\,dk\,d\alpha\,dx\,dt\,dy\,ds\Big] \notag \\
& =:  I_1 + I_2 + I_3 +I_4 + I_5.\label{stochas_entropy_1}
\end{align}
Since $u_\eps(s,y)$ is a viscous solution to the problem \eqref{eq:levy_con_laws-viscous}, one has
\begin{align}
0  & \le \mathbb{E} \Big[\int_{\Pi_T}\int_{\R^d} \int_{0}^1\int_{\R} \beta_{\vartheta}(u_\eps(0,y)-k)\phi_{\delta,\delta_0}(t,x,0,y) 
\varsigma_l(v(t,x,\alpha)-k)dk\,d\alpha dx\,dydt\Big] \notag \\
 &  + \mathbb{E} \Big[\int_{\Pi_T^2} \int_{0}^1 \int_{\R} 
 \beta_{\vartheta}(u_\eps(s,y)-k)\partial_s \phi_{\delta,\delta_0}(t,x,s,y)
 \varsigma_l(v(t,x,\alpha)-k)\,dk\, d\alpha \,dy\,ds\,dx\,dt\Big]\notag \\ 
 & + \mathbb{E} \Big[\int_{\Pi_T^2}\int_{\mathbf{E}} \int_{\R} 
 \int_{0}^1\Big(\beta_{\vartheta} \big(u_\eps(s,y) +\eta_\eps(y,u_\eps(s,y);z)-k\big)-\beta_{\vartheta}(u_\eps(s,y)-k)\Big) \notag \\
 & \hspace{4cm} \times \phi_{\delta,\delta_0}(t,x,s,y)\varsigma_l(v(t,x,\alpha)-k)
 \,dy\,d\alpha \,dk\,\tilde{N}(dz,ds)\,dx\,dt \Big]\notag\\
 & + \mathbb{E} \Big[\int_{\Pi_T^2}\int_{\mathbf{E}} \int_{\R}\int_{0}^1 
 \Big(\beta_{\vartheta} \big(u_\eps(s,y) +\eta_{\eps}(y,(u_\eps(s,y);z)-k\big)  -\eta_{\eps}(y,u_\eps(s,y);z) \beta_{\vartheta}^{\prime}(u_\eps(s,y)-k)\notag \\
 & \hspace{4cm} -\beta_{\vartheta}(u_\eps(s,y)-k)\Big) \phi_{\delta,\delta_0}(t,x;s,y)\varsigma_l(v(t,x,\alpha)-k)\,d\alpha\,dk\,dy\,m(dz)\,ds\,dx\,dt \Big]\notag \\
 & + \mathbb{E} \Big[\int_{\Pi_T^2}\int_{0}^1 \int_{\R}  \mathcal{F}^{\beta_{\vartheta}}(u_\eps(s,y),k)\vec{v}(s,y)  \cdot \grad_y\varrho_\delta(x-y) \psi(s,y)
 \rho_{\delta_0}(t-s ) \varsigma_l(v(t,x,\alpha)-k)\,dk\,d\alpha\,dx\,dt\,dy\,ds\Big] \notag \\
 & + \mathbb{E}\Big[\int_{\Pi_T^2}\int_{0}^1\int_{\R}\mathcal{F}^{\beta_{\vartheta}}(u_\eps(s,y),k) \vec{v}(s,y) \cdot \grad_y \psi(s,y) \varrho_\delta(x-y) 
  \rho_{\delta_0}(t-s )\varsigma_l(v(t,x,\alpha)-k)\,dk \,d\alpha\,dx\,dt\,dy\,ds\Big] \notag \\
  & - \eps \mathbb{E}\Big[ \int_{\Pi_T^2} \int_{0}^1\int_{\R} \beta_{\vartheta}^\prime(u_\eps(s,y)-k)\grad_y u_\eps(s,y)\cdot \grad_y  \phi_{\delta,\delta_0}(t,x,s,y)
  \varsigma_l(v(t,x,\alpha)-k)\,dk\, d\alpha \,dy\,ds\,dx\,dt\Big]  \notag \\ 
 &=:  J_1 + J_2 + J_3 +J_4 + J_5 + J_6 + J_7. \label{stochas_entropy_2}
\end{align}
We now add \eqref{stochas_entropy_1} and \eqref{stochas_entropy_2}, and compute limits with respect to the various parameters involved. In \cite{BisKarlMaj}, 
convergence of the terms $I_i (i=1,2,3,4)$ and $J_j (j=1,2,3,4,7)$ has been studied in details. Therefore, we only study the terms involving flux function namely 
the terms $I_5$, $J_5$ and $J_6$ in details. 

We first consider the term $I_5 + J_5$ and prove the following lemma.
\begin{lem}
There holds
\begin{align*} 
 \limsup_{\delta \downarrow 0, \, \, \vartheta \downarrow 0,\,\eps \downarrow 0\, l \downarrow 0\,\delta_0\downarrow 0} \big|I_5 + J_5\big| = 0. 
 \end{align*}
 \end{lem}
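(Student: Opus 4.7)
The plan is to combine $I_5$ and $J_5$ using the identity $\nabla_x\varrho_\delta(x-y)=-\nabla_y\varrho_\delta(x-y)$ so that both summands carry the common kernel $\nabla_y\varrho_\delta$, and then to split the sum by adding and subtracting $\vec v(s,y)$ inside the rewritten $I_5$. This yields $I_5+J_5=\mathcal S+\mathcal L$, where the \emph{symmetry part}
\[
\mathcal S=\mathbb E\int\bigl[\mathcal F^{\beta_\vartheta}(u_\eps,k)\varsigma_l(v-k)-\mathcal F^{\beta_\vartheta}(v,k)\varsigma_l(u_\eps-k)\bigr]\vec v(s,y)\cdot\nabla_y\varrho_\delta\,\psi\,\rho_{\delta_0}\,dk\,d\alpha\,dxdtdyds
\]
pairs the antisymmetric flux difference with the common velocity $\vec v(s,y)$, and the \emph{velocity-increment part}
\[
\mathcal L=\mathbb E\int\mathcal F^{\beta_\vartheta}(v,k)\bigl[\vec v(s,y)-\vec v(t,x)\bigr]\cdot\nabla_y\varrho_\delta\,\psi\,\rho_{\delta_0}\,\varsigma_l(u_\eps-k)\,dk\,d\alpha\,dxdtdyds
\]
isolates the oscillation of $\vec v$ across the supports of the mollifiers.

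For $\mathcal S$, I would first send $l\downarrow 0$, which collapses the convolutions in $k$ into the pointwise values $\mathcal F^{\beta_\vartheta}(u_\eps,v)$ and $\mathcal F^{\beta_\vartheta}(v,u_\eps)$. A direct computation from the definition $\mathcal F^{\beta_\vartheta}(a,b)=\int_b^a\beta_\vartheta'(\sigma-b)f'(\sigma)\,d\sigma$ shows that both converge, as $\vartheta\downarrow 0$, to the \emph{symmetric} Kruzhkov flux $\operatorname{sgn}(a-b)(f(a)-f(b))$, with the quantitative bound $|\mathcal F^{\beta_\vartheta}(a,b)-\mathcal F^{\beta_\vartheta}(b,a)|\lesssim c_f\,\vartheta$ on any bounded range of $a,b$. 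Combined with the uniform $L^2$-moment bounds on $u_\eps$ and $v$, the bound $|\vec v|\le V$ from assumption~\ref{A2}, and the standard mollifier estimate $\int|\nabla_y\varrho_\delta(x-y)|\,dy=O(1/\delta)$, this forces $\mathcal S\to 0$ provided $\vartheta$ is sent to zero faster than $\delta$.

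For $\mathcal L$, I would exploit the $C^1$-regularity and divergence-free property of $\vec v$ via a Taylor expansion on the support of $\varrho_\delta\rho_{\delta_0}$, writing
\[
\vec v(s,y)-\vec v(t,x)=[\nabla_x\vec v(t,x)](y-x)+\partial_t\vec v(t,x)(s-t)+R,\qquad |R|\lesssim(\delta+\delta_0)^2 .
\]
The constant-in-$y$ piece $\partial_t\vec v(t,x)(s-t)$, paired with $\int\nabla_y\varrho_\delta(x-y)\,dy=0$, produces only lower-order corrections; the linear piece $[\nabla_x\vec v(t,x)(y-x)]\cdot\nabla_y\varrho_\delta$ integrates in $y$ to $-\operatorname{div}_x\vec v(t,x)=0$ by assumption~\ref{A2}, once again up to lower-order corrections; and the Taylor remainder contributes $O((\delta+\delta_0)^2/\delta)\to 0$.

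The main obstacle is that the remaining $y$-dependent factors $\psi(s,y)$ and $\varsigma_l(u_\eps(s,y)-k)$ cannot be pulled out of the $y$-integral, so the divergence-free cancellations above leave behind commutator-type cross terms when these factors are also expanded about $y=x$. These corrections must be estimated by hand, using the Lipschitz regularity of $\psi$, the $L^2$-moment bound on $u_\eps$ of Lemma~\ref{lem:moment estimate}, and the continuity properties of $\varsigma_l$. The five parameters are then driven to zero in the iterated order encoded in the $\limsup$ (innermost $\delta_0$, followed by $l$, $\eps$, $\vartheta$, and finally $\delta$), matching the hierarchy used in \cite{BisKarlMaj}, and the joint limit delivers $\mathcal S,\mathcal L\to 0$ and hence the claim.
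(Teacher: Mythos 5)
Your top-level decomposition of $I_5+J_5$ into an antisymmetry part $\mathcal S$ and a velocity-increment part $\mathcal L$ is the same organizing idea as the paper's, and your treatment of $\mathcal S$ is essentially sound: collapsing the $k$-convolution costs $O(l/\delta)$, and the bound $|\mathcal F^{\beta_\vartheta}(a,b)-\mathcal F^{\beta_\vartheta}(b,a)|\le 2c_f\vartheta$ (from $|\mathcal F^{\beta_\vartheta}-\mathcal F|\le c_f\vartheta$ and the symmetry of $\mathcal F$) together with $\int|\nabla_y\varrho_\delta|\,dy=O(1/\delta)$ gives $O(\vartheta/\delta)$, which vanishes in the stated order of limits. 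This matches the paper's first error term.

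The gap is in $\mathcal L$. The corrections you defer --- the ``commutator-type cross terms'' produced by the $y$-dependence of $\psi(s,y)\,\varsigma_l(u_\eps(s,y)-k)$ --- are not lower order in the iterated limit, and the whole cancellation hinges on them. The factor $\varsigma_l(u_\eps(s,y)-k)$ has no modulus of continuity in $y$ that is uniform in $l$ and $\eps$: its Lipschitz constant in the argument is $O(l^{-2})$ (at best $O(l^{-1})$ after integrating in $k$), while $\|\nabla u_\eps\|_{L^2}$ is only $O(\eps^{-1/2})$. Hence the residue left over from the trace identity $\int (Az)\cdot\nabla\varrho_\delta(z)\,dz=-\operatorname{tr}A$ scales like $\delta\, l^{-1}\eps^{-1/2}$ or worse, and since $l\downarrow 0$ and $\eps\downarrow 0$ are taken \emph{before} $\delta\downarrow 0$, this diverges; integrating by parts in $y$ instead runs into the same $\nabla u_\eps$ factor. (A secondary issue: your remainder bound $|R|\lesssim(\delta+\delta_0)^2$ uses $C^2$ regularity of $\vec v$, whereas \ref{A2} only gives $C^1$.) The paper avoids all of this by two preliminary commutations: it shifts the variable $k$ so the mollifier appears as $\varsigma_l(k)$, independent of $x$ and $y$, and it replaces $v(s,x,\alpha)$ by $v(s,y,\alpha)$ inside the flux, an error controlled by the $L^1$-continuity of translations of $v$ rather than by any derivative of $u_\eps$. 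After these steps the velocity-increment term has no $x$-dependence except through $\vec v(s,x)\,\varrho_\delta(x-y)$, so $\int_{\R^d}\big[\vec v(s,x)-\vec v(s,y)\big]\cdot\nabla_y\varrho_\delta(x-y)\,dx=0$ holds \emph{exactly} by one integration by parts in $x$ and $\operatorname{div}_x\vec v=0$; no Taylor expansion and no commutator with $\varsigma_l(u_\eps-k)$ ever arises. You should restructure the $\mathcal L$ estimate along these lines.
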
 
\begin{proof}
Note that  
\begin{align}
& \Big| I_5 -\mathbb{E}\Big[\int_{\Pi_T}\int_{\R^d}\int_{0}^1  \int_{\R} \mathcal{F}^{\beta_\vartheta}(v(s,x,\alpha),k) \vec{v}(s,x)
\cdot\grad_x \varrho_\delta(x-y)\psi(s,y)\varsigma_l(u_\eps(s,y)-k)\,dk\,d\alpha\,dx\,dy\,ds\Big] \Big| \notag \\
& \le  \mathbb{E}\Big[\int_{\Pi_T^2}
\int_{0}^1  \int_{\R}  \big|\mathcal{F}^{\beta_\vartheta}(v(t,x,\alpha),k)
-\mathcal{F}^{\beta_\vartheta}(v(s,x,\alpha),k) \big||\vec{v}(t,x)| |\grad_x \varrho_\delta(x-y)| \psi(s,y)\notag \\
& \hspace{3cm} \times  \rho_{\delta_0}(t-s)\varsigma_l(u_\eps(s,y)-k)
\,dk\,d\alpha\,dx\,dy\,dt\,ds\Big] \notag \\
& \quad + \Big| \mathbb{E}\Big[\int_{\Pi_T}\int_{\R^d} 
\int_{0}^1  \int_{\R} \mathcal{F}^{\beta_\vartheta}(v(s,x,\alpha),k)\vec{v}(s,x)\cdot \grad_x \varrho_\delta(x-y)\,\psi(s,y)
 \Big( 1- \int_{t=0}^T \rho_{\delta_0}(t-s)\,dt \Big)\notag \\
 & \hspace{5cm} \times \varsigma_l(u_\eps(s,y)-k)\,dk\,d\alpha\,dx\,dy\,ds\Big] \Big| \notag \\
 &  \quad +   \mathbb{E}\Big[\int_{\Pi_T^2}
\int_{0}^1  \int_{\R} | \mathcal{F}^{\beta_\vartheta}(v(s,x,\alpha),k)||\vec{v}(s,x)-\vec{v}(t,x)|| \grad_x \varrho_\delta(x-y)|\,\psi(s,y)
 \notag \\
 & \hspace{5cm} \times  \rho_{\delta_0}(t-s)
 \varsigma_l(u_\eps(s,y)-k)\,dk\,d\alpha\,dx\,dy\,ds\,dt\Big] \Big| \notag \\
 & \le \mathbb{E}\Big[\int_{s=\delta_0}^T \int_{\Pi_T}\int_{\R^d}
 \int_{0}^1  \int_{\R}\big|\mathcal{F}^{\beta_\vartheta}(v(t,x,\alpha),k)-\mathcal{F}^{\beta_\vartheta}(v(s,x,\alpha),k) \big||\vec{v}(t,x)| |\grad_x \varrho_\delta(x-y)|\psi(s,y) 
 \notag \\
&  \hspace{5cm} \times \rho_{\delta_0}(t-s) \varsigma_l(u_\eps(s,y)-k)\,dk\,d\alpha\,dx\,dy\,dt\,ds\Big] + \mathcal{O}(\delta_0)\notag \\
& + C\mathbb{E}\Big[\int_{s=0}^{\delta_0} \int_{\R^d}\int_{\R^d}
\int_{0}^1  \int_{\R} \big| \mathcal{F}^{\beta_\vartheta}(v(s,x,\alpha),k) \vec{v}(s,x)\cdot 
\grad_x \varrho_\delta(x-y)\big|\psi(s,y)\varsigma_l(u_\eps(s,y)-k)\,dk\,d\alpha\,dx\,dy\,ds\Big] \notag\\
&\qquad  (\text{we have used the fact that}~  \int_{0}^T \rho_{\delta_0}(t-s)\,dt \le 1, 
\,\text{equality holds if}\, s\ge \delta_0).
\notag\\
 & \le  C \mathbb{E}\Big[\int_{s=\delta_0}^T \int_{\Pi_T}
 \int_{\R^d} \int_{0}^1 |\grad_x \varrho_\delta(x-y)|  \big|v(t,x,\alpha)-
 v(s,x,\alpha)\big|\psi(s,y) \rho_{\delta_0}(t-s)\,d\alpha\,dx\,dy\,dt\,ds\Big] + \mathcal{O}(\delta_0)\notag  \\
 &\qquad  (\text{we have used the Lipschitz 
 continuity of}\, \mathcal{F}^{\beta_\vartheta}(\cdot, k)\, \text{in above} )\notag\\
 & \le  C \Big( \mathbb{E}\Big[ \int_{s=\delta_0}^T \int_{\Pi_T} 
\int_{0}^1    |v(t,x,\alpha)
-v(s,x,\alpha)|^2 \rho_{\delta_0}(t-s)
d\alpha dx\,dt\,ds\Big] \Big)^\frac{1}{2} + \mathcal{O}(\delta_0) \notag \\
&  \le  C \Big( \mathbb{E} \Big[\int_{r=0}^1 \int_{\Pi_T} \int_{0}^1 
|v(t+ \delta_0\,r ,x,\alpha)-v(t,x,\alpha)|^2 
\rho(-r)\,d\alpha \,dt\,dx\,dr\Big]\Big)^\frac{1}{2} + \mathcal{O}(\delta_0) \notag.
\end{align}
In the above, we have used the notation $\mathcal{O}(\delta_0)$ to denote quantities that depend on $\delta_0$ and are bounded above by $C\delta_0$.
Note that, $ \displaystyle \underset{\delta_0\downarrow 0 }\lim\,\int_{\Pi_T} \int_{0}^1 |v(t+\delta_0 r,x,\alpha)-v(t,x,\alpha)|^2\,
\,d\alpha\,dx\,dt \rightarrow 0$ almost surely for all $r\in [0,1]$. 
Therefore, by the bounded convergence theorem,
\begin{align*}
\lim_{\delta_0\downarrow 0}\mathbb{E}\Big[\int_{r=0}^1\int_{\Pi_T} \int_{0}^1  
|v(t+\delta_0 r,x,\alpha)-v(t,x,\alpha)|^2\rho(-r)\,d\alpha\,dx\,dt\,dr\Big]=0.
\end{align*} 
 Since $\grad_y \varrho_\delta(x-y)= - \grad_x \varrho_\delta(x-y)$, we see that
\begin{align*}
& \Big|I_5 + \mathbb{E}\Big[\int_{\Pi_T}\int_{\R^d} 
\int_{0}^1  \int_{\R} \mathcal{F}^{\beta_\vartheta}(v(s,x,\alpha),u_{\eps}(s,y)-k)\vec{v}(s,x)\cdot \grad_y \varrho_\delta(x-y)
\,\psi(s,y)\varsigma_l(k)\,dk\,d\alpha\,dx\,dy\,ds\Big]\Big| \\
& \le A(\delta_0) + \mathcal{O}(\delta_0),
\end{align*} 
for some $A(\delta_0)$, with the property that $A(\delta_0)\goto 0$ as $\delta_0 \goto 0$.
In a similar manner, one has
\begin{align*}
&\Big|J_5 -\mathbb{E} \Big[\int_{\Pi_T} \int_{\R^d} \int_{\R}\int_{0}^1 
\mathcal{F}^{\beta_\vartheta}(u_\eps(s,y),v(s,x,\alpha)-k)\vec{v}(s,y)\cdot
\grad_y \varrho_\delta(x-y)
\, \psi(s,y)\varsigma_l(k) \,d\alpha\,dk\,dx\,dy\,ds\Big]\Big| \\
& \le B(\delta_0) + \mathcal{O}(\delta_0),
\end{align*}
where $B(\delta_0)$ is a quantity satisfying $B(\delta_0)\goto 0$ as $\delta_0 \goto 0$.
Note that, since $\text{div}_x \vec{v}(t,x)=0$ for all $(t,x)\in \Pi_T$, and $\grad_y \varrho_\delta(x-y)= - \grad_x \varrho_\delta(x-y)$, integration by parts formula yields 
\begin{align*}
 & \mathbb{E}\Big[\int_{\Pi_T} \int_{\R^d} \int_{\R}\int_{0}^1 \mathcal{F}^{\beta_{\vartheta}}(u_\eps(s,y),v(s,y,\alpha)-k)[\vec{v}(s,x)-\vec{v}(s,y)]\cdot \grad_y \varrho_\delta(x-y) \notag \\
 & \hspace{3cm} \times \psi(s,y)\varsigma_l(k)\,dk\,d\alpha\,dx\,dy\,ds\Big]=0.
\end{align*}
Hence, we have
\begin{align}
\big|I_5 + J_5\big| 
& \le \Bigg|\mathbb{E} \Big[\int_{\Pi_T}\int_{\R^d} \int_{\R}\int_{0}^1 
\Big( \mathcal{F}^{\beta_\vartheta}(v(s,x,\alpha),u_\eps(s,y)-k)-  \mathcal{F}^{\beta_\vartheta}(u_\eps(s,y),v(s,x,\alpha)-k)\Big)  \notag\\
&\hspace{4cm} \times \vec{v}(s,x) \cdot \grad_x \varrho_\delta(x-y) \psi(s,y) 
\varsigma_l(k)\,d\alpha\,dk\,dx\,ds\,dy\Big] \notag \\
& \quad + \mathbb{E} \Big[\int_{\Pi_T}\int_{\R^d} \int_{\R}\int_{0}^1 
\Big( \mathcal{F}^{\beta_\vartheta}(u_\eps(s,y), v(s,x,\alpha)-k)-  \mathcal{F}^{\beta_{\vartheta}}(u_\eps(s,y),v(s,y,\alpha)-k)\Big)  \notag\\
&\hspace{2cm} \times \big(\vec{v}(s,x)-\vec{v}(s,y)\big) \cdot \grad_x \varrho_\delta(x-y) \psi(s,y) 
\varsigma_l(k)\,d\alpha\,dk\,dx\,ds\,dy\Big]\Bigg| \notag \\
& \hspace{3cm} + A(\delta_0)  + B(\delta_0)  + \mathcal{O}(\delta_0). \notag 
\end{align}
Define $\displaystyle \mathcal{F}(a,b)=\text{sign}(a-b)\big(f(a)-f(b)\big)$.
Then, $\mathcal{F}$ is symmetric (\textit{i.e.}, $\mathcal{F}(a,b)= \mathcal{F}(b,a)$) and Lipschitz continuous in both of its variables. Moreover, 
\begin{align}
 \big|\mathcal{F}^{\beta_\vartheta}(a,b)-\mathcal{F}(a,b)\big|\le \vartheta c_f.\label{error:flux-}
\end{align}
Therefore, one has
\begin{align*}
 \Big|I_5 + J_5\Big| \le C(c_f, v, \psi) \delta + C(c_f, V, \psi) \frac{\vartheta}{\delta} + C(c_f, V, \psi) \frac{l}{\delta} + A(\delta_0)  + B(\delta_0)  + \mathcal{O}(\delta_0) 
\end{align*}
and hence 
\begin{align*}
 \limsup_{\delta \downarrow 0, \, \, \vartheta \downarrow 0,\,\eps \downarrow 0\, l \downarrow 0\,\delta_0\downarrow 0} \big|I_5 + J_5\big| = 0. 
\end{align*}
\end{proof}

\begin{lem}
It holds that
\begin{align*}
J_6  &\underset{\delta_0 \goto 0}{\rightarrow} \mathbb{E}\Big[\int_{\Pi_T}\int_{\R^d}\int_{0}^1 
\int_{\R} \mathcal{F}^{\beta_\vartheta}(u_\eps(s,y),k)\vec{v}(s,y)\cdot\grad_y \psi(s,y) \varrho_\delta(x-y) 
 \varsigma_l(v(s,x,\alpha)-k)\,dk \,d\alpha\,dx\,dy\,ds\Big] \\
&\underset{l \goto 0}{\rightarrow}  
\mathbb{E} \Big[\int_{\Pi_T}\int_{\R^d} \int_{0}^1\mathcal{F}^{\beta_\vartheta}(u_\eps(s,y),v(s,x,\alpha))\vec{v}(s,y)\cdot \grad_y \psi(s,y)
\, \varrho_\delta(x-y)\,d\alpha\,dx\,dy\,ds\Big] \\
 &\underset{\eps \goto 0}{\rightarrow}  \mathbb{E} \Big[\int_{\Pi_T}\int_{\R^d} 
 \int_{0}^1 \int_{0}^1 \mathcal{F}^{\beta_\vartheta}(u(s,y,\gamma),v(s,x,\alpha)) \vec{v}(s,y)
 \cdot \grad_y \psi(s,y)\varrho_\delta(x-y) \,d\gamma\,d\alpha\,dx\,dy\,ds\Big]\\
& \underset{\vartheta \goto 0}{\rightarrow} 
\mathbb{E} \Big[\int_{\Pi_T}\int_{\R^d} 
 \int_{0}^1 \int_{0}^1  \mathcal{F}(u(s,y,\gamma),v(s,x,\alpha)) \vec{v}(s,y)
 \cdot \grad_y \psi(s,y)\varrho_\delta(x-y) \,d\gamma\,d\alpha\,dx\,dy\,ds\Big]\\
& \underset{\delta \goto 0}{\rightarrow} \mathbb{E} \Big[\int_{\Pi_T} \int_{0}^1 \int_{0}^1 
\mathcal{F}(u(s,y,\gamma),v(s,y,\alpha)) \cdot \grad_y \psi(s,y)
 \,d\gamma\,d\alpha\,dy\,ds\Big].
 \end{align*}
\end{lem}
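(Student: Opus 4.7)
The plan is to take the five limits successively, the first three and the last being applications of approximate-identity / Young-measure arguments, and the fourth being a direct use of the quantitative bound $|\mathcal{F}^{\beta_\vartheta}(a,b)-\mathcal{F}(a,b)|\le \vartheta\, c_f$ recalled in \eqref{error:flux-}. At every stage the goal is to exhibit a dominating function from the a priori estimates (boundedness of $\vec{v}$, compact support and smoothness of $\psi$, and $L^2$-bounds on $u_\eps$ and $v$) and then invoke dominated convergence.

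For the step $\delta_0 \downarrow 0$, note that $t$ enters $J_6$ only through the product $\rho_{\delta_0}(t-s)\varsigma_l(v(t,x,\alpha)-k)$. Performing the $t$-integration first, this is the time mollification of the bounded measurable function $t\mapsto \varsigma_l(v(t,x,\alpha)-k)$, which converges a.e.\ in $s$ (and hence, by Lebesgue's differentiation theorem, $\mathbb{P}\otimes ds\otimes dx\otimes dy\otimes d\alpha\otimes dk$-a.e.) to $\varsigma_l(v(s,x,\alpha)-k)$; the product $|\mathcal{F}^{\beta_\vartheta}(u_\eps(s,y),k)||\vec{v}\cdot\grad_y\psi|\,\varrho_\delta(x-y)\,\|\varsigma_l\|_\infty$ provides the integrable majorant. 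For $l\downarrow 0$, for fixed data the mapping $k\mapsto \mathcal{F}^{\beta_\vartheta}(u_\eps(s,y),k)$ is Lipschitz with constant $c_f$, so convolution with the approximate identity $\varsigma_l$ yields pointwise (in fact locally uniform) convergence to $\mathcal{F}^{\beta_\vartheta}(u_\eps(s,y),v(s,x,\alpha))$; the same majorant works after noting that $|\mathcal{F}^{\beta_\vartheta}(a,b)|\le c_f(|a|+|b|)$.

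For $\eps \downarrow 0$, I would define the Carathéodory function
\begin{equation*}
G(\omega,s,y,\xi)=\int_{\R^d}\int_{0}^1 \mathcal{F}^{\beta_\vartheta}\bigl(\xi,v(s,x,\alpha)\bigr)\,\vec{v}(s,y)\cdot\grad_y\psi(s,y)\,\varrho_\delta(x-y)\,d\alpha\,dx,
\end{equation*}
which is supported in a fixed compact set in $y$ (by compact support of $\psi$), Lipschitz in $\xi$ uniformly in the remaining variables, and such that $\{G(\cdot,u_\eps)\}_\eps$ is uniformly integrable on $\Omega\times(0,T)\times\R^d$ thanks to the a priori $L^2$-bounds on $u_\eps$ from the Appendix and on $v$. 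Then the Young-measure Proposition~\ref{prop:young-measure}, applied to the sequence $u_\eps$ whose limiting Young measure is represented by $u(s,y,\gamma)$, yields the stated convergence with the new integration variable $\gamma$. The step $\vartheta\downarrow 0$ then follows directly from \eqref{error:flux-}: the difference is bounded by $\vartheta\,c_f\,V\,\|\grad_y\psi\|_\infty\,|\supp\psi|\,T$, which vanishes. Finally, for $\delta\downarrow 0$, the function $x\mapsto \mathcal{F}(u(s,y,\gamma),v(s,x,\alpha))\vec{v}(s,y)\cdot\grad_y\psi(s,y)$ is integrable in $x$ (on the support of $\psi$), so convolution with $\varrho_\delta$ converges in $L^1(\R^d)$, giving the collapsed expression with $x$ replaced by $y$.

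The principal obstacle is Step~3: the convergence as $\eps\downarrow 0$ hinges on verifying the hypotheses of Proposition~\ref{prop:young-measure}, in particular that $G(\cdot,u_\eps)$ is uniformly integrable on $\Omega\times(0,T)\times\R^d$. Here the compact support of $\psi$ restricts the $y$-integration to a ball, and the linear growth of $\mathcal{F}^{\beta_\vartheta}(\xi,\cdot)$ in $\xi$ combined with the uniform $L^2$-bound on $u_\eps$ supplies the required equi-integrability. All other steps are routine mollifier or dominated-convergence arguments using bounds already established earlier in the paper.
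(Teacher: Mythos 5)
Your proposal is correct and follows essentially the same route as the paper's proof: five successive limits, with Lipschitz/approximate-identity estimates for the $\delta_0$, $l$ and $\delta$ steps, Proposition~\ref{prop:young-measure} applied to the Carath\'{e}odory function $G$ (with uniform integrability supplied by the linear growth of $\mathcal{F}^{\beta_\vartheta}$ and the $L^2$-bounds on $u_\eps$ and $v$) for $\eps \downarrow 0$, and the bound \eqref{error:flux-} for $\vartheta \downarrow 0$. The one place where the paper's version is tighter is the $\delta_0\downarrow 0$ step: your stated majorant $|\mathcal{F}^{\beta_\vartheta}(u_\eps(s,y),k)|\,|\vec{v}\cdot\grad_y\psi|\,\varrho_\delta(x-y)\,\|\varsigma_l\|_\infty$ is not integrable in $k$ over $\R$ (since $\mathcal{F}^{\beta_\vartheta}(a,k)$ grows linearly in $|k|$), so the dominated-convergence argument as written does not close; the paper instead shifts the variable so that $\varsigma_l(k)$ localizes the $k$-integration, uses the Lipschitz continuity of $\mathcal{F}^{\beta_\vartheta}$ in its second argument to reduce the difference to $|v(t,x,\alpha)-v(s,x,\alpha)|$, and concludes by Cauchy--Schwartz and the $L^2$-continuity of $t\mapsto v(t,\cdot,\cdot)$ --- a repair you should incorporate.
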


\begin{proof} The proof is divided into five steps.
\vspace{.1cm}

\textbf{Step 1:} We will justify the $\delta_0\to 0$ limit.  Define 
\begin{align}
\mathcal{B}_1& :=\Big| J_6
- \mathbb{E}\Big[\int_{\Pi_T}\int_{\R^d} \int_{\R} \int_{0}^1  
\mathcal{F}^{\beta_\vartheta}(u_\eps(s,y),k)\vec{v}(s,y)\cdot \grad_y \psi(s,y) \varrho_{\delta}(x-y)
 \varsigma_l(v(s,x,\alpha)-k) d\alpha\,dk \,dy\,dx\,ds \Big] \Big| \notag \\
 & = \Big| \mathbb{E} \Big[\int_{\Pi_T^2}\int_{\R} \int_{0}^1  \Big(\mathcal{F}^{\beta_\vartheta}(u_{\eps}(s,y), v(t,x,\alpha)-k)
 - \mathcal{F}^{\beta_\vartheta}(u_{\eps}(s,y), v(s,x,\alpha)-k)\Big) \vec{v}(s,y)\cdot  \grad_y\psi(s,y)\notag \\
 & \hspace{5cm} \times  \rho_{\delta_0}(t-s) \varrho_{\delta}(x-y) \varsigma_l(k)\,d\alpha\,dk\,dy\,ds\,dx\,dt\Big]\notag \\
& \quad- \mathbb{E} \Big[\int_{\Pi_T}\int_{\R^d} \int_{\R} \int_{0}^1  
\mathcal{F}^{\beta_\vartheta}(u_\eps(s,y),v(s,x,\alpha)-k)\vec{v}(s,y)\cdot \grad_y \psi(s,y)\,\varrho_{\delta}(x-y)  \notag \\
& \hspace{4cm} \times\Big( 1- \int_{0}^T \rho_{\delta_0}(t-s)\,dt \Big) \varsigma_l(k) d\alpha\,dk \,dy\,dx\,ds \Big]  \Big| \notag \\
 &\le C \mathbb{E}\Big[\int_{\Pi_T^2}\int_{0}^1 |\grad_y\psi(s,y)| \,\rho_{\delta_0}(t-s)
 \varrho_{\delta}(x-y) |v(s,x,\alpha)-v(t,x,\alpha)| \,d\alpha\,dy\,ds\,dx\,dt\Big]\notag \\
&\quad + C\mathbb{E} \Big[\int_{0}^{\delta_0} \int_{\R^d}\int_{\R^d} \int_{\R} 
\int_{0}^1 \big| \mathcal{F}^{\beta_\vartheta}(u_\eps(s,y),k)||\grad_y \psi(s,y)\big|
\,\varrho_{\delta}(x-y) \varsigma_l(v(s,x,\alpha)-k) 
d\alpha\,dk \,dy\,dx\,ds\Big]\notag\\
&\le C \mathbb{E} \Big[\int_{\delta_0}^T \int_{\R^d}\int_{0}^T\int_{0}^1 |v(s,x,\alpha)-v(t,x,\alpha)| 
\rho_{\delta_0}(t-s)\,d\alpha\,dt\,dx\,ds\Big]
+ \mathcal{O}(\delta_0)\notag\\
&\le C \Big(\mathbb{E} \Big[\int_{\delta_0}^T
\int_{0}^T \int_{\R^d} \int_{0}^1 |v(s,x,\alpha)-v(t,x,\alpha)|^2  
\rho_{\delta_0}(t-s) d\alpha\, dx\,dt\,ds\Big] \Big)^\frac{1}{2} + \mathcal{O}(\delta_0)
\longrightarrow 0 \,\,\text{as}\,\, \delta_0 \goto 0, \notag
\end{align}
and therefore the first step follows.

\textbf{Step 2:} We will justify the $l\to 0$ limit. Let
\begin{align*}
\mathcal{B}_2 :=& \mathbb{E} \Big[\int_{\Pi_T}\int_{\R^d} 
\int_{0}^1  \int_{\R} \mathcal{F}^{\beta_\vartheta}(u_\eps(s,y),k)\vec{v}(s,y)\cdot \grad_y \psi(s,y) 
\varrho_\delta(x-y)  \varsigma_l(v(s,x,\alpha)-k)\,dk\,d\alpha\,dx\,dy\,ds\Big] \\
 & \quad-\mathbb{E}  \Big[\int_{\Pi_T}\int_{\R^d} \int_{0}^1  
 \mathcal{F}^{\beta_\vartheta}(u_\eps(s,y),v(s,x,\alpha))\vec{v}(s,y)\cdot \grad_y \psi(s,y)\,\varrho_\delta(x-y)\,d\alpha\,dx\,dy\,ds\Big] \\
&=\mathbb{E}  \Big[\int_{\Pi_T}\int_{\R^d} \int_{0}^1 
\int_{\R} \Big(\mathcal{F}^{\beta_\vartheta}(u_\eps(s,y),k) 
- \mathcal{F}^{\beta_\vartheta}(u_\eps(s,y),v(s,x,\alpha))\Big)\vec{v}(s,y)\cdot  \grad_y \psi(s,y) \\
&\hspace{5cm}\times \varrho_\delta(x-y) \varsigma_l(v(s,x,\alpha)-k)\,dk\,d\alpha\,dx\,dy\,ds\Big].
\end{align*}
By using the boundedness of $\vec{v}$ and Lipschitz property of $\mathcal{F}^{\beta_\vartheta}$, we arrive at 
\begin{align*}
|\mathcal{B}_2|& \le C l \int_{\Pi_T} |\grad_y \psi(s,y)|\,dy\,ds \goto 0\quad \text{as $l\goto 0$.}
\end{align*}

\textbf{Step 3:} We now justify the passage to the limit $\eps \goto 0$. 
Let 
$$ G_x(s,y,\omega,\xi)=\int_{\R^d}\int_{0}^1 \mathcal{F}^{\beta_\vartheta}(\xi,v(s,x,\alpha))\vec{v}(s,y)\cdot \grad_y \psi(s,y)\,
\varrho_\delta(x-y)\,d\alpha\,dx. $$
Then $G_x(s,y,\omega,\xi)$ is a  Carath\'{e}odory function for every $x\in \R^d$ and 
$\{ G_x(s,y,\omega, u_{\eps_n}(s,y))\}_n$ is bounded in $L^2((\Theta,  \Sigma, \mu);\R)$ and uniformly integrable. Thus, by Proposition \ref{prop:young-measure}  we conclude that
\begin{align}
& \lim_{\eps\rightarrow 0} \mathbb{E} \Big[\int_{\R^d}
\int_{\Pi_T}\int_{0}^1 \mathcal{F}^{\beta_\vartheta}(u_{\eps}(s,y),v(s,x,\alpha))\vec{v}(s,y)\cdot 
\grad_y \psi(s,y)\,\varrho_\delta(x-y)\,d\alpha\,dx\,ds\,dy\Big]\notag \\
& = \mathbb{E}\Big[\int_{\R^d}\int_{\Pi_T}\int_{0}^1 
\int_{0}^1 \mathcal{F}^{\beta_\vartheta}(u(s,y,\gamma),v(s,x,\alpha))\vec{v}(s,y)\cdot \grad_y \psi(s,y)\,
 \varrho_\delta(x-y)\,d\gamma d\alpha dx ds dy\Big].\notag 
\end{align} 

\textbf{Step 4:} Justification of the limit $\vartheta \goto 0$. Let 
\begin{align*}
 \mathcal{B}_3:& = \mathbb{E}\Big[\int_{\R^d}\int_{\Pi_T}\int_{0}^1 
\int_{0}^1 \Big(\mathcal{F}^{\beta_\vartheta}(u(s,y,\gamma),v(s,x,\alpha))-\mathcal{F}(u(s,y,\gamma),v(s,x,\alpha))\Big) \vec{v}(s,y)\cdot \grad_y \psi(s,y)\\
 & \hspace{3cm} \times \varrho_\delta(x-y)\,d\gamma d\alpha dx ds dy\Big].
\end{align*}
In view of \eqref{error:flux-} and the assumption \ref{A2}, we see that 
\begin{align*}
 |\mathcal{B}_3| \le C(V, c_f)\vartheta \int_{\Pi_T}|\grad_y \psi(s,y)|\,dy\,ds \goto 0\quad \text{as}\quad \vartheta \goto 0.
 \end{align*}

\textbf{Step 5:} Justification of the limit $\delta \goto 0$. 
\begin{align*}
 \mathcal{B}_4:=&\Big| \mathbb{E} \Big[\int_{\Pi_T}\int_{\R^d}
\int_{0}^1 \int_{0}^1  \mathcal{F}(u(s,y,\gamma),v(s,x,\alpha))\vec{v}(s,y)\cdot \grad_y \psi(s,y)\,
\varrho_\delta(x-y) \,d\gamma\,d\alpha\,dx\,dy\,ds \Big]\\
& \quad- \mathbb{E}\Big[\int_{\Pi_T}\int_{0}^1 \int_{0}^1  
\mathcal{F}(u(s,y,\gamma),v(s,y,\alpha))\vec{v}(s,y)\cdot \grad_y \psi(s,y)
\,d\gamma\,d\alpha\,dy\,ds\Big]\Big|\\
& \le C \mathbb{E} \Big[\int_{\Pi_T}\int_{\R^d}\int_{0}^1 
\int_{0}^1  \big|\mathcal{F}(u(s,y,\gamma),v(s,x,\alpha))-\mathcal{F}(u(s,y,\gamma),v(s,y,\alpha))\big|\\
& \hspace{5cm} \times
|\grad_y \psi(s,y)|\, \varrho_\delta(x-y) \,d\gamma\,d\alpha\,dx\,dy\,ds\Big].
\end{align*}
Since $\mathcal{F}$ is Lipschitz continuous in both of its variables, by Cauchy-Schwartz's inequality, we have 
\begin{align*}
 |\mathcal{B}_4| \le C\Big( E\Big[ \int_{\Pi_T}\int_{\R^d}\int_{0}^1
|v(s,y,\gamma)-v(s,y+\delta z,\gamma) |^2 \varrho(z) \,d\gamma\,dy\,dz\,ds\Big] \Big)^{\frac 12}  \goto 0 \quad \text{as} \,\, \delta \goto 0.
\end{align*}
This completes the proof.
\end{proof}

Following \cite{BisKarlMaj}, we arrive at 
\begin{lem}
 The following hold:
 \begin{align*}
  & \lim_{(\delta,\, \vartheta,\, \eps,\, l,\, \delta_0) \goto 0}(I_1 + J_1)
  = \mathbb{E}\Big[\int_{\R^d} | v_0(x)-u_0(x)|\psi(0,x)\,dx\Big]; \,\,\,\,\, \limsup_{(\eps,\,l,\,\delta_0)\rightarrow(0)}|J_7|=0. \\
   & \lim_{(\delta,\, \vartheta,\, \eps,\, l,\, \delta_0) \goto 0}(I_2 + J_2)= 
   \mathbb{E}\Big[\int_{\Pi_T}\int_{0}^1 \int_{0}^1  
|u(s,y,\gamma)-v(s,y,\alpha)| \partial_s \psi(s,y)\,d\gamma\,d\alpha\,dy\,ds\Big]. \\
& \lim_{(l,\,\delta_0) \goto 0} (I_4 + J_4) = \mathbb{E}\Big[ \int_{\Pi_T} \int_{\R^d}\int_{\mathbf{E}} \int_{0}^1 \int_0^1 \Big\{ \beta_{\vartheta}^{\prime\prime}\big( u_\eps(s,y)-v(s,x,\alpha) + 
\lambda \eta(u_\eps(s,y);z)\big) |\eta(u_\eps(s,y);z)|^2  \\
&\hspace{5.5cm} + \beta_{\vartheta}^{\prime\prime}\big(v(s,x,\alpha)-u_\eps(s,y) + \lambda \eta(v(s,x,\alpha);z)\big) |\eta(v(s,x,\alpha);z)|^2 \Big\} \\
& \hspace{6.5cm} \times (1-\lambda)  \psi(s,y)\varrho_\delta(x-y) \,d\alpha\,d\lambda\,m(dz)\,dy\,dy\,ds\Big].
 \end{align*}
\end{lem}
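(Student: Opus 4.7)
The plan is to analyze each of the three limit assertions by passing to the limit in the stated order of the parameters, exploiting mollifier cancellations, Young measure convergence from Proposition \ref{prop:young-measure}, and the Lipschitz/boundedness properties of $\beta_{\vartheta}$, $f$, and $\eta$. The key structural feature throughout is that the Kru\v{z}kov mollifier $\varsigma_l(u_{\eps}(s,y)-k)$ appearing in the $I_i$ terms and $\varsigma_l(v(t,x,\alpha)-k)$ appearing in the $J_j$ terms will, after $l\downarrow 0$, integrate the $k$-variable away into a point evaluation, producing Kato/Kru\v{z}kov-type differences of the two generalized entropy solutions.

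For $(I_1+J_1)$, the plan is to first send $\delta_0\to 0$, which collapses $\phi_{\delta,\delta_0}(0,x,s,y)$ and $\phi_{\delta,\delta_0}(t,x,0,y)$ onto the initial time slice via $\int \rho_{\delta_0}=1$, pairing the initial data of $v$ and $u_{\eps}$ symmetrically; then $\eps\to 0$ via Young measure convergence applied to the Carath\'eodory function $\beta_{\vartheta}(u_0^{\eps}(y)-k)\varsigma_l(v_0(x)-k)$ (which is uniformly integrable by the $L^2$ initial-data bound); then $l\to 0$ and $\vartheta\to 0$ using $\varsigma_l(r-\cdot)\rightharpoonup \delta_r$ and $\beta_{\vartheta}(r)\to |r|$ uniformly on bounded sets; and finally $\delta\to 0$ using Lebesgue differentiation of $\varrho_\delta$ against $u_0,v_0\in L^2(\R^d)$. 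For $J_7$, the viscosity term, the idea is to integrate the $k$-variable to replace $\beta_{\vartheta}^{\prime}(u_{\eps}-k)\varsigma_l(v-k)$ by a bounded quantity and then use the \emph{a priori} bound $\sqrt{\eps}\,\|\grad u_\eps\|_{L^2(\Omega\times \Pi_T)}\le C$ together with Cauchy--Schwarz to obtain $|J_7|\le C(\psi,\delta)\sqrt{\eps}\to 0$.

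For $(I_2+J_2)$, the crucial observation is the mollifier identity
\[
\partial_t \phi_{\delta,\delta_0}(t,x,s,y)+\partial_s \phi_{\delta,\delta_0}(t,x,s,y)=\varrho_\delta(x-y)\rho_{\delta_0}(t-s)\,\partial_s \psi(s,y),
\]
since the derivatives of $\rho_{\delta_0}(t-s)$ cancel identically. Symmetrizing the two terms in $(t,x,\alpha)\leftrightarrow(s,y,k)$ and then passing to the limit in the order $\delta_0\to 0$ (to align $t$ and $s$), $l\to 0$ (collapsing $k$ to $u_{\eps}$ or $v$), $\eps\to 0$ (Young measure), $\vartheta\to 0$ ($\beta_{\vartheta}\to |\cdot|$), and $\delta\to 0$ (mollifier in space) yields the stated Kru\v{z}kov-type expression with $\partial_s\psi$.

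For $(I_4+J_4)$, the noise--noise interaction, the plan is to exploit the symmetry of $\beta_{\vartheta}$ and the It\^o--L\'evy Taylor remainder representation: the integrands in $I_4$ and $J_4$ are already written as second-order remainders with the parameter $\lambda\in(0,1)$, and after collapsing $\varsigma_l$ as $l\to 0$ one replaces $k$ in $I_4$ by $u_{\eps}(s,y)$ and $k$ in $J_4$ by $v(s,x,\alpha)$, so that the two contributions assemble into the stated symmetric sum. The main obstacle, and the reason assumption \ref{A3} requires $\eta$ to be bounded in addition to Lipschitz, is to keep the argument of $\beta_{\vartheta}^{\prime\prime}$ under control and to supply an integrable dominating function for passing the limit under the Poisson integral; specifically, $|\eta(u;z)|\le C^{\ast}h_1(z)$ with $h_1\in L^2(\mathbf{E},m)$ together with the uniform $L^2$ bound from Lemma \ref{lem:moment estimate} will justify Fubini and dominated convergence to extract the $l$ and $\delta_0$ limits while retaining the $\vartheta$-dependent structure for the subsequent analysis in the uniqueness proof.
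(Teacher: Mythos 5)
Your overall route is the standard Kru\v{z}kov doubling argument that the paper itself does not write out (it simply invokes \cite{BisKarlMaj}), and the treatment of $J_7$ via $\eps\|\grad u_\eps\|_{L^2}^2\le C$ and of $I_4+J_4$ via the second-order Taylor remainder plus the bound $|\eta(u;z)|\le C^*h_1(z)$ is correct. There are, however, two places where the sketch as written would not go through. First, in $I_2+J_2$ the identity $\partial_t\phi_{\delta,\delta_0}+\partial_s\phi_{\delta,\delta_0}=\rho_{\delta_0}(t-s)\varrho_\delta(x-y)\partial_s\psi(s,y)$ is only usable if the coefficients multiplying $\partial_t\phi$ and $\partial_s\phi$ are literally the same function, and they are not: $I_2$ carries $\beta_\vartheta(v-k)\varsigma_l(u_\eps-k)$ while $J_2$ carries $\beta_\vartheta(u_\eps-k)\varsigma_l(v-k)$. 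The missing step is the exact identity $\int_\R\beta_\vartheta(a-k)\varsigma_l(b-k)\,dk=\int_\R\beta_\vartheta(b-k)\varsigma_l(a-k)\,dk$, which follows because $r\mapsto\beta_\vartheta(c+r)-\beta_\vartheta(c-r)$ is odd (by the evenness of $\beta_\vartheta$) and $\varsigma_l$ is even. Without this cancellation the residual term is paired with $\partial_s\rho_{\delta_0}\sim\delta_0^{-1}$, and a crude Lipschitz bound gives only $O(l/\delta_0)$, which does not vanish in the prescribed order of limits ($\delta_0\to0$ before $l\to0$). You should state and use this symmetry explicitly.

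Second, your description of the $I_1+J_1$ limit as ``pairing the initial data symmetrically via $\int\rho_{\delta_0}=1$'' is inaccurate for the one-sided mollifier $\supp\rho\subset[-1,0]$: in $J_1$ the factor is $\rho_{\delta_0}(t)$ with $t\ge0$, so $J_1$ vanishes (up to a null set), whereas in $I_1$ the factor $\rho_{\delta_0}(-s)$ integrates to $1$ over $s\in[0,\delta_0]$ and the entire limit $\mathbb{E}\big[\int_{\R^d}|v_0-u_0|\psi(0,x)\,dx\big]$ is produced by $I_1$ alone (using the right-continuity at $t=0$ of the viscous solution and the strong convergence $u_0^\eps\to u_0$ in $L^2$, so no Young measure is needed there). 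Taken literally, a symmetric pairing with unit mass on both sides would double-count the initial term. These are both fillable, but they are precisely the points at which the doubling argument is delicate, so they need to appear in the proof.
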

Let us consider the stochastic integrals. Note that $J_3=0$. In view of It\^{o}-L\'{e}vy formula, we see that
\begin{align}
I_3 &= \mathbb{E}\Big[\int_{\R}\int_{\Pi_{T}} 
J[\beta_{\vartheta}^{\prime},\phi_{\delta,\delta_0}](s;y,k)\Big(\int_{s-\delta_0}^s 
\varsigma_l(u_\eps(\sigma,y)-k)\,\text{div} (f(u_\eps(\sigma,y))\vec{v}(\sigma,y)) 
\,d\sigma\Big)\,ds\,dy\,dk\Big]\notag  \\
& \quad -\mathbb{E}\Big[\int_{\R}\int_{\Pi_{T}} J[\beta_{\vartheta}^{\prime},\phi_{\delta,\delta_0}](s;y,k)
\Big(\int_{s-\delta_0}^s \varsigma_l(u_\eps(\sigma,y)-k)
\, \eps  \Delta u_\eps(\sigma,y)\big) \,d\sigma\Big)\,ds\,dy\,dk\Big]\notag  \\
& 
\quad + \mathbb{E}\Big[\int_{\Pi_{T}}\int_{\R}\int_{r=s-\delta_0}^s
\int_{\R^d}\int_{\mathbf{E}} \int_{0}^1 \Big(\beta_{\vartheta}\big(v(r,x,\alpha)+ \eta(v(r,x,\alpha);z)-k\big)
-\beta_{\vartheta}(v(r,x,\alpha)-k)\Big) \notag \\
& \hspace{4cm}\times\Big( \varsigma_l(u_\eps(r,y)+\eta(u_\eps(r,y);z)-k)
-\varsigma_l(u_\eps(r,y)-k)\Big) \notag \\
& \hspace{5cm}\times\rho_{\delta_0}(r-s)\,\psi(s,y)\,\varrho_{\delta}(x-y) \,d\alpha \,m(dz)\,dx\,dr\,dk\,dy\,ds\Big] \notag \\
&\quad +\mathbb{E}\Big[\int_{\R}\int_{\Pi_{T}} 
J[\beta_{\vartheta},\phi_{\delta,\delta_0}](s;y,k)\Big\{ \int_{s-\delta_0}^s \int_{\mathbf{E}}\int_{0}^1(1-\lambda) |\eta(u_\eps(\sigma,y);z)|^2\notag\\
&\hspace{4cm}\times \varsigma_{l}^{\prime\prime}( u_\eps(\sigma,y)-k
+\lambda \eta( u_\eps(\sigma,y);z) )\,d\lambda\,m(dz)\,d\sigma \,\Big\}\,dy\,ds\,dk\Big] \notag \\
&\quad =: A_1^{l,\eps}(\delta,\delta_0) 
+ A_2^{l,\eps}(\delta,\delta_0)+  B^{\eps, l} + A_3^{l,\eps}(\delta,\delta_0), \notag
 \end{align}
 where 
 \begin{align*}
  J[\beta_{\vartheta}, \phi_{\delta,\delta_0}](s; y, k) :=&
\int_{\Pi_T}\int_{\mathbf{E}} \int_0^1 \Big(\beta_{\vartheta}\big(v(r,x,\alpha) +\eta(v(r,x,\alpha); z)-k\big)
 -\beta_{\vartheta}\big(v(r,x,\alpha)-k\big)\Big) \\
&\hspace{6cm}\times\phi_{\delta,\delta_0}(r,x,s,y) \,d\alpha\, \tilde{N}(dz,dr)\,dx. 
 \end{align*}
Thanks to the assumption \ref{A2}, by following the arguments as in the proof of \cite[Lemma 5.6]{BisKarlMaj}, we infer that 
 $A_1^{l,\eps}(\delta,\delta_0),\, A_2^{l,\eps}(\delta,\delta_0),\, A_3^{l,\eps}(\delta,\delta_0) \goto 0\, \text{as $\delta_0 \goto 0$}$, and 
 \begin{align*}
    \lim_{(l,\,\delta_0) \goto 0} B^{l,\eps}(\delta,\delta_0)
 =& \mathbb{E}\Big[\int_{\Pi_{T}}\int_{\R^d}
 \int_{\mathbf{E}} \int_{0}^1 \Big\{\beta_{\vartheta}\big(v(r,x,\alpha)+ \eta(v(r,x,\alpha);z)
  -u_\eps(r,y)-\eta(u_\eps;z)\big) \notag \\
  & \hspace{1.5cm} -\beta_{\vartheta}\big(v(r,x,\alpha)+ \eta(v(r,x,\alpha);z)-u_\eps(r,y)\big)  + \beta_{\vartheta}\big(v(r,x,\alpha)-u_\eps(r,y)\big) \notag \\
  &  \hspace{1cm} -\beta_{\vartheta}\big(v(r,x,\alpha)-u_\eps(r,y)-\eta(u_\eps(r,y);z)\big)
  \Big\}\psi(r,y)\,\varrho_{\delta}(x-y)\,d\alpha\,m(dz)\,dx\,dy\,dr\Big], \notag
 \end{align*}
 which yields
  \begin{align*}
& \lim_{l\goto 0}\lim_{\delta_0 \goto 0} \Big((I_3 +J_3)+ (I_4 + J_4)\Big) \notag \\
&= \mathbb{E} \Big[\int_{\Pi_T}\int_{\R^d}\Big( \int_{\mathbf{E}}
\int_{0}^1\int_{0}^1 b^2
(1-\theta)\beta_{\vartheta}^{\prime\prime}(a+\theta\,b)
\,d\theta\,d\alpha\,m(dz)\Big)\,\psi(t,y)\varrho_{\delta}(x-y)\,dx\,dy\,dt\Big],
\end{align*}
where $ a=v(t,x,\alpha)-u_\eps(t,y)$ and $b=\eta(v(t,x,\alpha);z)-\eta(u_\eps(t,y);z)$. In view of \ref{A3}, one has
(cf. proof of \cite[Lemma\,$5.11$]{BisKarlMaj}) $\displaystyle 
 b^2 \beta_\vartheta^{\prime\prime}(a+\theta\,b)\le 2 (1-\lambda^*)^{-2}\vartheta h_1^2(z)$,
and thus 
\begin{align*}
& \limsup_{\delta\goto 0,\,\vartheta \goto 0} \limsup_{\eps \rightarrow 0}\Big[ \lim_{l\goto 0}\lim_{\delta_0 \goto 0}\Big((I_3 +J_3)+ (I_4 + J_4)\Big)\Big]=0.
\end{align*} 
Finally, we add \eqref{stochas_entropy_1} and \eqref{stochas_entropy_2}, and pass to the limits $\delta_0 \goto 0$, $l\goto 0$, $\eps \goto 0$, $\vartheta \goto 0$ and $\delta \goto 0$ to
arrive at the following Kato inequality
\begin{align}
   & \mathbb{E} \Big[\int_{\R^d} | v_0(x)-u_0(x)|\psi(0,x)\,dx\Big] 
   + \mathbb{E} \Big[\int_{\Pi_T} \int_{0}^1 
   \int_{0}^1 |v(t,x,\alpha)-u(t,x,\gamma)|
   \partial_t \psi(t,x)\,d\alpha\,d\gamma \,dx\,dt\Big] \notag \\
   & \hspace{ 3cm}+ \mathbb{E} \Big[\int_{\Pi_T}\int_{0}^1 
   \int_{0}^1 \mathcal{F}\big(v(t,x,\alpha),u(t,x,\gamma)\big)\vec{v}(t,x)
  \cdot \grad_x \psi(t,x) \,d\alpha\,d\gamma\,dx\,dt \Big]\ge 0,\label{inq:Kato}
\end{align}
where $0\le \psi \in H^1( [0,\infty) \times \R^d)$ with compact support. One can choose special test function $\psi$ and $u_0=v_0$ in \eqref{inq:Kato} to conclude that 
$u(t,x,\gamma)=v(t,x,\alpha)$ for a.e. $(t,x)\in \Pi_T$ and a.e. $(\alpha,\gamma)\in (0,1)^2$\,(cf. proof of \cite[Theorem $2.2$]{BisKarlMaj}).
This finishes the proof.

\subsection{On Poisson random measure}
For the convenience of the reader, we recapitulate the basics of Poisson random measure. Let $\{\tau_n\}_{n\ge 1}$ be a sequence of independent exponential random variables with
parameter $\iota$ and $T_n =\sum_{i=1}^n \tau_i$.
 Then the process
 \begin{align*}
  N_t= \sum_{n\ge 1} {\bf 1}_{t\ge T_n}
 \end{align*}
 counts the number of random times $T_n$ which arise between $0$ and $t$. The jump times $T_1, T_2,\cdots$ form a random configuration 
 of points on $[0,\infty)$. This  counting procedure defines a measure on $[0,\infty)$ as follows: for any measurable set $A \subset 
  (0,\infty)$, set $N(\omega, A)= \# \{ i \ge 1: T_i(\omega) \in A\}$. Clearly, $N(\omega,\cdot)$ is a positive integer-valued 
  measure and for fixed $A$, $N(\cdot, A)$ is a Poisson random variable with parameter $\iota |A|$, where $|A|$ denotes the Lebesgue 
  measure of $A$. Also, if $A$ and $B$ are two disjoint sets then $N(\cdot,A)$ and $N(\cdot,B)$ are two independent random variables. This can be extended to a general 
setting. Let $ \bar{\mathbb{Z}_+}=\mathbb{Z}_+ \cup \{ + \infty\}$.
\begin{defi} Let $(\Theta,\mathcal{B},\rho)$ be a $\sigma$-finite measure space. A family of $\bar{\mathbb{Z}_+}$-valued random
variables 
$\{ N(B):B\in \mathcal{B}\}$ is called a Poisson random measure on $ \Theta$ with intensity measure $\rho$, if 
 \item[i)] For each $B$, $N(B)$ has a Poisson distribution with mean $\rho(B)$.
\item[ii)] If $ B_1, B_2,\cdots,B_m $ are disjoint, then $ N(B_1),N(B_2),....,N(B_m)$ are independent.
\item[iii)] For every $\omega \in \Omega$, $N(.,\omega)$ is a measure on $\Theta$.
\end{defi}
\noindent{\textbf{Construction of a Poisson random measure:}} Let $(\Theta,\mathcal{B},\rho)$ be a $\sigma$-finite measure space. We want to construct a Poisson
random measure $\{ N(B):B\in \mathcal{B}\}$ on $ \Theta$ with intensity measure $\rho$ on some probability space
$(\Omega,\mathcal{F},\mathbb{P})$. Assume that $\rho(\Theta)<\infty$. If $\rho=0$, then we choose $N(B)=0$. Assume that $ \rho(\Theta) >0 $. Then on
some probability space $(\Omega,\mathcal{F},\mathbb{P})$, one can construct a sequence $\{Z_n:n=1,2,3......\}$ of i.i.d random variables on $\Theta$ each having distribution $(\rho(\Theta))^{-1}\rho $ and a Poisson
random variable $Y$ with mean $\rho(\Theta)$ such that $Y$ and $ \{Z_n\}$ are independent. Define
\begin{equation}
 N(B)=
\begin{cases}
 0,&\text{if $ Y=0$} \notag \\
\sum_{j=1}^Y \chi_B(Z_j),&\text{if $ Y\geq 1$}. \notag
\end{cases}
\end{equation}
Then, $\{ N(B):B\in \mathcal{B}\}$ is called a Poisson random measure on $\Theta$ with intensity measure $\rho$, see \cite[Proposition $19.4$]{sato}.
Next we consider the case $\rho(\Theta)=\infty$. By $\sigma$-finiteness, there exist disjoint sets 
$\Theta_1,\Theta_2,....\in\mathcal{B}$ such that $\cup_{k=1}^{\infty} \Theta_k=\Theta$
and $\rho(\Theta_k)<\infty$, for each $k$. Define $\rho_k(B)=\rho(B\cap \Theta_k)$. Then, one can construct independent Poisson random measures $\{ N_k(B):B\in \mathcal{B}\}$, $ k=1,2,3,\cdots$, with intensity
measure $\rho_k$, defined on a probability space
$(\Omega,\mathcal{F},\mathbb{P})$. Then $$ N(B):= \sum_{k=1}^{\infty} N_k(B),\quad B\in \mathcal{B}$$ 
is a Poisson random measure on $\Theta$ with intensity measure $\rho$ (cf.~ \cite[Proposition $19.4$]{sato}).
\vspace{.1cm}

The construction of a Poisson random measure shows that it is a counting measure associated to a random sequence 
of points $X_n(\omega)$ in $\Theta$ such that
\begin{align*}
 N(\omega, B)= \sum_{n\ge 1} {\bf 1}_{B}(X_n(\omega)).
\end{align*} 
Let us give an example of a Poisson random measure on $[0,\infty)\times \R^d$.  Let $L= \{L_t\}_{t\ge 0}$ be a L\'{e}vy process taking values in $\R^d$ on a given filtered probability space 
$\big(\Omega, \mathbb{P}, \mathcal{F}, \{\mathcal{F}_t\}_{t\ge 0} \big)$. For $t\ge 0$ and $A\in \mathcal{B}(\R^d_0)$ , where 
$\R^d_0= \R^d \setminus \{0\}$, we define
\begin{align*}
 N \big( [0,t], A\big)&= \# \Big\{ 0\le s\le t: L_s -L_{s-}\in A \Big\} = \sum_{s\le t} {\bf 1}_{A} (\Delta L(s))
 \end{align*}
 where $\Delta L(s)= L_s -L_{s-}$. It counts the jumps $\Delta L(s)$ of the process $L$ of size in $A$ up to time $t$.
 Let $\nu(A)= \mathbb{E}\big[ N\big( [0,1], A\big)\big]$, the expected number of jumps of $L_t$ per unit time, whose size belongs to $A$. The L\'{e}vy measure $\nu(dz)$ may be infinite but
 satisfies $\int_{\R^d_0} ( |z|^2 \wedge 1) \nu(dz) < + \infty$. It is a Radon measure with a possible singularity at $z=0$ i.e., $\nu(dz)$
 restricted to each $\R^d \setminus B(0,r), r>0$ is a finite measure. One can show the following properties:
 \begin{itemize}
  \item[i).] $N\big( [0,t], A\big)$ is a random variable on $\big(\Omega, P, \mathcal{F}, \{\mathcal{F}_t\}_{t\ge 0} \big)$.
  \item[ii).] $t\mapsto N\big( [0,t], A\big) $ is a Poisson process with intensity $t\nu(A)$.
  \item[iii).] $N\big( [0,t], \o{}\big)=0$ and for any disjoint sets $A_1, A_2,\cdots, A_m$, the random variables $N\big( [0,t], A_1\big)$, $N\big( [0,t], A_2\big), \cdots, 
  N\big( [0,t], A_m\big)$ are independent. 
 \end{itemize} 
 The compensated Poisson random measure is defined by
 \begin{align*}
  \tilde{N}\big( [0,t], A\big)= N\big( [0,t], A\big)- t \nu(A).
 \end{align*}
 Next we define stochastic integral with respect to compensated Poisson random measure $\tilde{N}(dz,dt)= N(dz,dt)-\nu(dz)dt$, where $N(dz,dt)$ is a Poisson random measure and $\nu(dz)$
  is a L\'{e}vy measure. To do so, let us first define it so called for simple predictable functions. A simple predictable function  $f(s,z): \Omega \times [0,T]\times \R^d_0\goto \R$ is of the form 
  $$ f(s,z)= \sum_{i=1}^n \sum_{j=1}^m  \xi_{ij} {\bf 1}_{(\tau_i,\tau_{i+1}]}(t) {\bf 1}_{A_j}(z),$$
 	where $0=\tau_0 < \tau_1 < \cdots < \tau_{n+1}=T$ are stopping times, $\xi_{i1},\cdots, \xi_{im} \in \mathcal{F}_{\tau_i}$ for 
 	$i=1,\cdots,n$ with $\xi_{ij}$ are bounded for all $i,j$, and $A_1,\cdots A_m \in \mathcal{B}(\R^d_0)$ are disjoint sets with 
 	$\nu(A_1),\cdots \nu(A_m) < \infty$. For simple predictable function $f(s,z)$ of the above form, we define 
 		$$  I_t(f)=: \int_0^t \int_{\R^d_0} f(s,z)\tilde{N}(dz,ds)=  \sum_{i=1}^n \sum_{j=1}^m  \xi_{ij}  \tilde{N} \big((\tau_i \wedge t,\tau_{i+1} \wedge t], A_j \big).$$
 \begin{lem}
  Let $f: \Omega \times [0,T]\times \R^d_0\goto \R$ be a simple predictable function. Then $I_t(f)$ is a $L^2$- martingale and satisfies the isometry property 
  \begin{align}
   \mathbb{E}\Big[ \big| I_t(f)\big|^2\Big] = \mathbb{E}\Big[ \int_0^t \int_{\R^d_0} \big| f(s,z)\big|^2 \nu(dz)\,ds\Big].\label{eq:isometry}
  \end{align}
 \end{lem}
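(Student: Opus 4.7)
The plan is to reduce the result to two elementary facts about the compensated Poisson random measure, and then exploit the structure of simple predictable functions together with optional stopping to pass to the general case. For each fixed Borel set $A \subset \R^d_0$ with $\nu(A) < \infty$, the process $M_t^A := N([0,t], A) - t\,\nu(A)$ is a right-continuous $\{\mathcal{F}_t\}$-martingale (this follows from the stationary independent increments of the Poisson counting process $t \mapsto N([0,t], A)$ together with $\mathbb{E}[N([0,t], A)] = t\,\nu(A)$), and its second moment is $\mathbb{E}[(M_t^A)^2] = t\,\nu(A)$. Moreover, for disjoint Borel sets $A, B$ with finite $\nu$-mass, the processes $M^A$ and $M^B$ are independent martingales by property (ii) of the Poisson random measure on disjoint measurable subsets of $[0,T] \times \R^d_0$.

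For the martingale property of $I_t(f)$, I would write
\begin{align*}
I_t(f) = \sum_{i=1}^{n}\sum_{j=1}^{m} \xi_{ij}\bigl(M^{A_j}_{\tau_{i+1}\wedge t} - M^{A_j}_{\tau_i \wedge t}\bigr),
\end{align*}
and verify $\mathbb{E}[I_t(f) \mid \mathcal{F}_s] = I_s(f)$ term by term. For each $(i,j)$, the factor $\xi_{ij}$ is bounded and $\mathcal{F}_{\tau_i}$-measurable; taking out what is known and applying the optional sampling theorem to the bounded stopping times $\tau_i \wedge s \le \tau_i \wedge t$ and $\tau_{i+1} \wedge s \le \tau_{i+1} \wedge t$ applied to the uniformly integrable martingale $M^{A_j}$ stopped at $T$ produces the required identity. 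The $L^2$-integrability of each summand is immediate from the boundedness of $\xi_{ij}$ and $\mathbb{E}[(M^{A_j}_T)^2] = T\nu(A_j) < \infty$.

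For the isometry I would expand $|I_t(f)|^2$ into a double sum and argue that all off-diagonal terms vanish in expectation. If $i < k$, I condition on $\mathcal{F}_{\tau_k}$: the factor corresponding to the index $i$ is $\mathcal{F}_{\tau_k}$-measurable because $\tau_{i+1} \le \tau_k$, while $\mathbb{E}\bigl[\xi_{kl}\bigl(M^{A_l}_{\tau_{k+1}\wedge t} - M^{A_l}_{\tau_k \wedge t}\bigr) \mid \mathcal{F}_{\tau_k}\bigr] = 0$ by optional sampling, killing the cross term. If $i = k$ but $j \ne l$, the sets $A_j$ and $A_l$ are disjoint, so $M^{A_j}$ and $M^{A_l}$ are independent zero-mean martingales; conditioning on $\mathcal{F}_{\tau_i}$ reduces the cross term to $\xi_{ij}\xi_{il}\,\mathbb{E}\bigl[(M^{A_j}_{\tau_{i+1}\wedge t} - M^{A_j}_{\tau_i\wedge t})(M^{A_l}_{\tau_{i+1}\wedge t} - M^{A_l}_{\tau_i\wedge t}) \mid \mathcal{F}_{\tau_i}\bigr]$, which vanishes by this independence (a discretisation argument on the finite increments of $\tau_{i+1} \wedge t - \tau_i \wedge t$, or an appeal to the orthogonality of compensated Poisson integrals over disjoint marks). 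On the surviving diagonal $i = k$, $j = l$, a further conditioning on $\mathcal{F}_{\tau_i}$ combined with the computation
\begin{align*}
\mathbb{E}\bigl[\bigl(M^{A_j}_{\tau_{i+1}\wedge t} - M^{A_j}_{\tau_i\wedge t}\bigr)^2 \,\big|\, \mathcal{F}_{\tau_i}\bigr] = \bigl((\tau_{i+1}\wedge t) - (\tau_i \wedge t)\bigr)\,\nu(A_j),
\end{align*}
which is the optional stopping form of the Poisson variance identity, yields $\sum_{i,j} \mathbb{E}\bigl[\xi_{ij}^2\,\bigl((\tau_{i+1}\wedge t) - (\tau_i\wedge t)\bigr)\,\nu(A_j)\bigr]$. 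Because the sets $A_j$ are disjoint, the same sum equals $\mathbb{E}\bigl[\int_0^t\int_{\R^d_0} |f(s,z)|^2\,\nu(dz)\,ds\bigr]$, which is \eqref{eq:isometry}.

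The main obstacle is the careful handling of stopping times in the two identities
\begin{align*}
\mathbb{E}\bigl[M^{A_j}_{\tau_{i+1}\wedge t} - M^{A_j}_{\tau_i\wedge t} \mid \mathcal{F}_{\tau_i \wedge s}\bigr] &= M^{A_j}_{\tau_{i+1}\wedge s} - M^{A_j}_{\tau_i \wedge s},\\
\mathbb{E}\bigl[\bigl(M^{A_j}_{\tau_{i+1}\wedge t} - M^{A_j}_{\tau_i\wedge t}\bigr)^2 \mid \mathcal{F}_{\tau_i}\bigr] &= \bigl((\tau_{i+1}\wedge t)- (\tau_i\wedge t)\bigr)\nu(A_j),
\end{align*}
and in verifying the orthogonality between $M^{A_j}$ and $M^{A_l}$ stopped at the same random pair of times when $A_j \cap A_l = \emptyset$. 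Both are standard consequences of the Doob optional sampling theorem (the second applied to the martingale $(M^{A_j}_t)^2 - t\,\nu(A_j)$) once one verifies the requisite integrability, which is automatic here because the $\xi_{ij}$ are bounded and $\nu(A_j) < \infty$.
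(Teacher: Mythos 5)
The paper states this lemma without proof (it is quoted as a standard property of the It\^{o}--L\'{e}vy integral of simple predictable integrands), so there is no in-paper argument to compare against; judged on its own, your proof is the standard one and is essentially correct. Two points deserve slightly more care than you give them. First, for the cross terms with $i=k$, $j\ne l$, independence of the processes $M^{A_j}$ and $M^{A_l}$ does not by itself control the product of their increments over the \emph{random} interval $(\tau_i\wedge t,\tau_{i+1}\wedge t]$, because the stopping times may depend on both processes; the clean argument is that $M^{A_j}M^{A_l}$ is itself a martingale (the counting processes $N(\cdot,A_j)$ and $N(\cdot,A_l)$ a.s.\ never jump simultaneously when $A_j\cap A_l=\emptyset$, so the covariation vanishes), after which optional sampling kills the term exactly as in your $i<k$ case. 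Second, your displayed identity
\begin{align*}
\mathbb{E}\bigl[\bigl(M^{A_j}_{\tau_{i+1}\wedge t} - M^{A_j}_{\tau_i\wedge t}\bigr)^2 \,\big|\, \mathcal{F}_{\tau_i}\bigr] = \bigl((\tau_{i+1}\wedge t) - (\tau_i \wedge t)\bigr)\,\nu(A_j)
\end{align*}
cannot hold as written, since $\tau_{i+1}$ is not $\mathcal{F}_{\tau_i}$-measurable; the correct statement (from optional sampling applied to $(M^{A_j}_t)^2-t\,\nu(A_j)$) is $\nu(A_j)\,\mathbb{E}\bigl[(\tau_{i+1}\wedge t)-(\tau_i\wedge t)\mid\mathcal{F}_{\tau_i}\bigr]$, which after multiplying by the $\mathcal{F}_{\tau_i}$-measurable bounded factor $\xi_{ij}^2$ and taking expectations yields the same final sum, so the conclusion \eqref{eq:isometry} is unaffected. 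With these repairs the martingale property, the vanishing of all off-diagonal terms, and the identification of the diagonal sum with $\mathbb{E}\bigl[\int_0^t\int_{\R^d_0}|f(s,z)|^2\,\nu(dz)\,ds\bigr]$ all go through.
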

 In view of the isometry property, one can extend the integral to the closure of the space of simple predictable functions in $L^2\big(\Omega\times [0,T]\times \R^d_0\big)$ with respect to 
 $\sigma$-algebra $\mathcal{F}_T \otimes \mathcal{B}([0,T]) \otimes \mathcal{B}(\R^d_0)$ and product measure $\mathbb{P}\otimes dt\otimes \nu(dz)$. Note that above mentioned closure contains all 
 $\mathcal{P}_T \otimes \mathcal{B}(\R^d_0)$-measurable functions $f:\Omega \times [0,T]\times \R^d_0\goto \R$ such that 
 \begin{align}
  \mathbb{E}\Big[ \int_0^T \int_{\R^d_0} \big| f(s,z)\big|^2 \nu(dz)\,ds\Big]< + \infty.\label{property:l2}
 \end{align}
Thus, for any predictable functions satisfying \eqref{property:l2}, one can define the integral via limiting argument. One can also show that  $t\mapsto \int_0^t \int_{\R^d_0} f(s,z)\tilde{N}(dz,ds)$ is a 
martingale. Moreover \eqref{eq:isometry} holds.



\begin{thebibliography}{99}
 \bibitem{Balder}
E.~J.~Balder.
\newblock Lectures on Young measure theory and its applications in economics.
\newblock{\em Rend. Istit. Mat.Univ. Trieste}, 31 Suppl. 1:1-69, 2000.

\bibitem{BaVaWit}
C.~ Bauzet, G.~ Vallet, and P.~ Wittbold.
\newblock The Cauchy problem for a conservation law with a multiplicative stochastic perturbation. 
\newblock{\em Journal of Hyperbolic Differential Equations.} 9 (2012), no 4, 661-709. 

 \bibitem{bauzet}
C.~ Bauzet.
\newblock On a time-splitting method for a scalar conservation law with a multiplicative
stochastic perturbation and numerical experiments.
\newblock{\em  J. Evol. Equ.}  14 (2014), no. 2, 333-356. 


\bibitem{Vallet2014}
C.~ Bauzet, G.~ Vallet, and P.~ Wittbold.
 \newblock The Dirichlet problem for a conservation law with a multiplicative stochastic perturbation.
 \newblock{ \em J. Funct. Anal.} 266 (2014), no. 4, 2503-2545. 

\bibitem{BaVaWit_2015}
C. Bauzet, G. Vallet and P. Wittbold.
\newblock  A degenerate parabolic-hyperbolic Cauchy problem with a stochastic force.
\newblock{\em J. Hyperbolic Diff. Equ.} 12, (2015), no.3, 501-533.

\bibitem{bauzet-fluxsplitting}
C.~ Bauzet, J. Charrier, and T. Gallou\"{e}t.
\newblock Convergence of flux-splitting finite volume schemes for hyperbolic scalar conservation laws with a multiplicative stochastic perturbation. 
\newblock{\em  Math. Comp.} 85 (2016), no. 302, 2777-2813. 


\bibitem{bauzet-finvolume}
C.~ Bauzet, J. Charrier, and T. Gallou\"{e}t.
\newblock  Convergence of monotone finite volume schemes for hyperbolic scalar conservation laws with multiplicative noise.
\newblock{Stoch. Partial Differ. Equ. Anal. Comput.} 4 (2016), no. 1, 150-223. 


\bibitem{BisMaj}
I. H. Biswas and A.~K. Majee.
\newblock Stochastic conservation laws: weak-in-time formulation and strong entropy condition.
\newblock{\em Journal of Functional Analysis}\,\,267\,(2014),\,2199-2252.


 \bibitem{BisKarlMaj}
I. H. Biswas, K. H. Karlsen, and A.~ K. Majee.
\newblock Conservation laws driven by  L\'{e}vy  white noise.
\newblock{\em Journal of Hyperbolic Differential Equations}, no. 3, 581-654.

\bibitem{BisKoleyMaj}
I. H. Biswas, U.~ Koley, and A.~ K. Majee.
\newblock  Continuous dependence estimate for conservation laws with  L\'{e}vy noise.
 \newblock  {\em J. Differential Equations} 259 (2015), no. 9, 4683-4706.
 
 
\bibitem{BisMajVal}
I. H. Biswas, A. K. Majee, and G. Vallet.
\newblock On the Cauchy problem of a degenerate parabolic-hyperbolic PDE with L\'{e}vy noise.
\newblock{\em submitted.} ArXiv: http://arxiv.org/pdf/1604.05009.pdf

 
 \bibitem{Chen:2012fk}
G.~Q.~ Chen, Q.~ Ding, and K. ~H. Karlsen.
\newblock On nonlinear stochastic balance laws.
\newblock {\em Arch. Ration. Mech. Anal.}, 204(3):707-743, 2012.
 
 \bibitem{dafermos}
C. ~M. Dafermos.
\newblock Hyperbolic conservation laws in continuum physics.
  {\em Grundlehren der Mathematischen Wissenschaften [Fundamental Principles
  of Mathematical Sciences]}, 325.
\newblock Springer-Verlag, Berlin, 2000.
 
 \bibitem{Vovelle2010}
A.~ Debussche and J. Vovelle. 
\newblock Scalar conservation laws with stochastic forcing. 
\newblock{\em J. Funct. Analysis}, 259 (2010), 1014-1042. 

\bibitem{martina}
A.~ Debussche and M.~Hofmanov\'{a}, and J.~Vovelle.
\newblock Degenerate parabolic stochastic partial differential equations: quasilinear case.
\newblock {\em Annals of Prob.}, Vol. 44, No. 3, 1916-1955, 2016.

 \bibitem{xu}
Z.~Dong and T.~G. Xu.
\newblock One-dimensional stochastic {B}urgers equation driven by {L}{\'e}vy
  processes.
\newblock {\em J. Funct. Anal.}, 243(2):631-678, 2007.
 
 
 \bibitem{gallouet00}
 R. Eymard, T. Gallou\"{e}t, and R. Herbin.
 \newblock Finite volume methods. 
 \newblock {\em  Handbook of numerical analysis, Vol. VII, Handb. Numer. Anal., VII},713-1020. North-Holland, Amsterdam, 2000.

  
 \bibitem{nualart:2008}
J.~ Feng and D.~ Nualart.
\newblock Stochastic scalar conservation laws.
\newblock {\em J. Funct. Anal.}, 255(2):313--373, 2008.

 
 
 \bibitem{godu}
E. Godlewski and P-A. Raviart.
\newblock {\em Hyperbolic systems of conservation laws}, volume 3/4 of {\em
  Math{\'e}matiques \& Applications (Paris) [Mathematics and Applications]}.
\newblock Ellipses, Paris, 1991.


\bibitem{tudor}
W. Grecksch and C. Tudor. 
\newblock Stochastic Evolution Equations. A Hilbert space approach.
\newblock {\em Mathematical Research,~85}. Akademie-Verlag, Berlin, 1995.

 
 
 \bibitem{risebroholden1997}
H.~ Holden and N.~H.~ Risebro.
\newblock Conservation laws with random source.
\newblock{\em Appl. Math. Optim}, 36(1997), 229-241.
 

\bibitem{Kim2003}
J.~U.~ Kim.
\newblock On a stochastic scalar conservation law, 
\newblock{Indiana Univ. Math. J.}  52 (1) (2003) 227-256. 

\bibitem{KoleyMajVal}
 U. Koley, A. K. Majee and G. Vallet.
 \newblock Continuous dependence estimate for a degenerate parabolic-hyperbolic equation with L\'{e}vy noise.
 \newblock{\em Stoch. Partial Differ. Equ. Anal. Comput.\,(to appear)}.
 
 \bibitem{KoleyMajVal-rateofconvergence}
 U. Koley, A. K. Majee and G. Vallet.
 \newblock A finite difference scheme for conservation laws driven by L\'{e}vy noise.
 \newblock{\em IMA Journal of Numerical Analysis (to appear).} 


\bibitem{KarlStor}
K.~H.~Karlsen and E.~B.~Storr\o{}sten.
\newblock Analysis of a splitting method for stochastic balance laws.
\newblock ArXiv: http://arxiv.org/pdf/1601.02428v2.pdf

\bibitem{kroker}
I.~Kr\"{o}ker and C.~Rohde.
\newblock Finite volume schemes for hyperbolic balance laws with multiplicative noise.
\newblock {\em Applied Numerical Mathematics 62}, 441-456, 2012.

\bibitem{kruzkov}
S.~ N.~ Kruzkov.
\newblock First order quasilinear equations with several independent variables.
\newblock {\em Mat. Sb. (N.S.)} , 81(123): 228-255, 1970.

\bibitem{malek}
J.~M{\'a}lek, J.~Ne\v{c}as, M.~Rokyta, and M.~R{{\o}circ{u}}\v{z}i\v{c}ka.
\newblock {\em Weak and measure-valued solutions to evolutionary {PDE}s},
  volume~13 of {\em Applied Mathematics and Mathematical Computation}.
\newblock Chapman \& Hall, London, 1996.



\bibitem{panov}
E.~Y.~Panov.
\newblock On measure-valued solutions of the Cauchy problem for 
a first-order quasilinear equations.
\newblock{\em Investig. Mathematics}, 60(2):335-377, 1996.


 \bibitem{peszat}
S.~Peszat and J.~Zabczyk.
\newblock {\em Stochastic partial differential equations with {L}{\'e}vy
  noise}, volume 113 of {\em Encyclopedia of Mathematics and its Applications}.
\newblock Cambridge University Press, Cambridge, 2007.
\newblock An evolution equation approach.

\bibitem{sato}
K.-i. Sato.
\newblock  { \em L\'{e}vy processes and infinitely divisible distributions}, volume 113 of {\em  Cambridge Studies in Advanced 
Mathematics and its Applications}. 
\newblock Cambridge University Press, Cambridge, 1999.

\bibitem{vallet2008}
 G. Vallet.
\newblock Stochastic perturbation of nonlinear degenerate parabolic problems.
\newblock{\em  Differential Integral Equations} 21 (2008), no. 11-12, 1055-1082.


\bibitem{volpert}
A.~I. Vol'pert.
\newblock Generalized solutions of degenerate second-order quasilinear
  parabolic and elliptic equations.
\newblock {\em Adv. Differential Equations}, 5(10-12):1493--1518, 2000.


\end{thebibliography}
\end{document}